\newtheorem{theorem}{Theorem}
\newtheorem{acknowledgement}{Acknowledgement}
\newtheorem{corollary}{Corollary}
\newtheorem{lemma}{Lemma}
\newtheorem{proposition}{Proposition}
\newtheorem{remark}{Remark}
\newenvironment{proof}[1][Proof]{\textbf{#1.} }{\ \rule{0.5em}{0.5em}}
\begin{document}

\begin{center}
{\Large Linear Half-Space Problems in Kinetic Theory: Abstract Formulation
and Regime Transitions}\bigskip

{\large Niclas Bernhoff\smallskip }

Department of Mathematics, Karlstad University, 65188 Karlstad, Sweden

niclas.bernhoff@kau.se
\end{center}

\textbf{Abstract:}{\small \  Half-space problems in the kinetic theory of
gases are of great importance in the study of the asymptotic behavior of
solutions of boundary value problems for the Boltzmann equation for small
Knudsen numbers. In this work a generally formulated half-space problem,
based on generalizations of stationary half-space problems in one spatial
variable for the Boltzmann equation - for hard-sphere models of monatomic
single species and multicomponent mixtures - is considered. The number of
conditions on the indata at the interface needed to obtain well-posedness is
investigated. Exponential fast convergence is obtained "far away" from the
interface. In particular, the exponential decay at regime transitions -
where the number of conditions on the indata needed to obtain well-posedness
changes - for linearized kinetic half-space problems related to the
half-space problem of evaporation and condensation in kinetic theory are
considered. The regime transitions correspond to the transition between
subsonic and supersonic evaporation/condensation, or the transition between
evaporation and condensation. Near the regime transitions, slowly varying
modes might occur, preventing uniform exponential speed of convergence
there. By imposing extra conditions on the indata at the interface, the
slowly varying modes can be eliminated near a regime transition, giving rise
to uniform exponential speed of convergence near the regime transition.}

{\small \ Values of the velocity of the flow at the far end, for which
regime transitions take place are presented for some particular variants of
the Boltzmann equation: for monatomic and polyatomic single species and
mixtures, and the quantum variant for bosons and fermions.}

\section{Introduction with motivating examples \label{S1}}

Half-space problems in the kinetic theory of gases are of great importance
in the study of the asymptotic behavior of solutions of boundary value
problems for the Boltzmann equation for small Knudsen numbers; providing
boundary conditions for the fluid-dynamic-type equations and Knudsen-layer
corrections to solutions of the fluid-dynamic-type equations in a
neighborhood of the boundary \cite{Sone-02, Sone-07}. The steady half-space
problem for the Boltzmann equation in a slab symmetry \cite{Ce-86, Sone-02,
Sone-07} reads%
\begin{equation}
\left\{ 
\begin{array}{l}
v\dfrac{\partial F}{\partial x}=Q\left( F,F\right) ,\;F=F\left( x,\mathbf{v}%
\right) ,\smallskip \\ 
F\left( 0,\mathbf{v}\right) =M_{B}\left( \mathbf{v}\right) \ \text{for}\;v>0,
\\ 
F\rightarrow M_{\infty }\;\text{as}\;x\rightarrow \infty ,%
\end{array}%
\right.  \label{e}
\end{equation}%
where $x\in \mathbb{R}_{+}$ and $\mathbf{v}=\left( v_{1},v_{2},v_{3}\right)
\in \mathbb{R}^{3}$, with $v=v_{1}$. Here $F=F\left( x,\mathbf{v}\right) $
denotes the distribution function of molecules with velocity $\mathbf{v}\in 
\mathbb{R}^{3}$ at distance $x$ from a (planar) interface - typically
between a gas and its condensed phase - and the Gaussian $M_{\infty
}=M_{\infty }\left( \mathbf{v}\right) =\dfrac{\rho }{\left( 2\pi T\right)
^{3/2}}e^{-\left\vert \mathbf{v-u}\right\vert ^{2}/\left( 2T\right) }$
denotes the equilibrium, or, Maxwellian, distribution - approached far away
from the interface; as $x\rightarrow \infty $. Moreover, $\rho \in \mathbb{R}%
_{+}$, $\mathbf{u}=\left( u_{1},u_{2},u_{3}\right) \in \mathbb{R}^{3}$, with 
$u=u_{1}$, and $T\in \mathbb{R}_{+}$, relate (are equal, or, at least
proportional) to the density, bulk velocity, and temperature at the far end,
respectively. The impinging molecules are absorbed at the interface, while
the emerging molecules are desorbed according to a (given) Maxwellian
distribution of the interface (condensed phase). The collision integral $%
Q\left( F,F\right) $, acting on the distribution function $F=F\left( x,%
\mathbf{v}\right) $ only with respect to the velocity dependence, is
quadratic in $F$. The bilinear operator $Q=Q\left( F,G\right) $ is assumed
to be symmetric, such that $Q\left( F,G\right) =Q\left( G,F\right) $ \cite[%
p.11]{Cercignani-00}. After a shift in the velocity space, we obtain, due to
invariance of the collision operator $Q$ under such a transformation,

\begin{equation}
\left\{ 
\begin{array}{l}
\left( v+u\right) \dfrac{\partial \widetilde{F}}{\partial x}=Q\left( 
\widetilde{F},\widetilde{F}\right) ,\smallskip \\ 
\widetilde{F}\left( 0,\mathbf{v}\right) =M_{B}\left( \mathbf{v+u}\right) \ 
\text{for}\;v+u>0, \\ 
\widetilde{F}\rightarrow M\text{ as }x\rightarrow \infty ,%
\end{array}%
\right.  \label{ee}
\end{equation}%
where $\widetilde{F}\left( x,\mathbf{v}\right) =F\left( x,\mathbf{v+u}%
\right) $ and 
\begin{equation*}
M=M\left( \mathbf{v}\right) =M_{\infty }\left( \mathbf{v+u}\right) =\dfrac{%
\rho }{\left( 2\pi RT\right) ^{3/2}}e^{-\left\vert \mathbf{v}\right\vert
^{2}/\left( 2RT\right) }.
\end{equation*}
A suitable linearization, $\widetilde{F}=M+\sqrt{M}f$, around the
non-drifting Maxwellian $M=M_{\infty }\left( \mathbf{v+u}\right) $, results,
after discarding the quadratic terms, in%
\begin{equation}
\left\{ 
\begin{array}{l}
\left( v+u\right) \dfrac{\partial f}{\partial x}+\mathcal{L}f=0,\;f=f\left(
x,\mathbf{v}\right) ,\smallskip \\ 
f\left( 0,\mathbf{v}\right) =M^{-1/2}\left( M_{B}\left( \mathbf{v+u}\right)
-M\right) \ \text{for}\;v+u>0, \\ 
f\rightarrow 0\;\text{as}\;x\rightarrow \infty ,%
\end{array}%
\right.  \label{e0}
\end{equation}%
where $\mathcal{L}f=-2M^{-1/2}Q\left( M,M^{1/2}f\right) $. This problem has
been extensively studied in the literature, see e.g. \cite{BCN-86, Ce-86,
CGS-88, Go-08, GP-89, BGS-06}.

More general boundary conditions, where the distribution function for the
emerging molecules (for which $v>0$ and $v+u>0$, in problem $\left( \ref{e}%
\right) $ and $\left( \ref{ee}\right) $, respectively, for an interface at
rest) may depend (partly, or, completely) on the distribution function for
the impinging molecules (for which $v<0$ and $v+u<0$, in problem $\left( \ref%
{e}\right) $ and $\left( \ref{ee}\right) $, respectively), can\ be
considered at the interface $x=0$ \cite{Cercignani-88, Cercignani-00,
Sone-02, Sone-07}.

Considering the real Hilbert space $\mathfrak{h}_{1}\mathfrak{:}=L^{2}\left(
d\mathbf{v}\right) $, and viewing the molecules as hard spheres, the
linearized operator $\mathcal{L}$ is a nonnegative, self-adjoint Fredholm
operator on $\mathfrak{h}_{1}$, with domain%
\begin{equation*}
\mathrm{D}(\mathcal{L})=L^{2}\left( \left( 1+\left\vert \mathbf{v}%
\right\vert \right) d\mathbf{v}\right)
\end{equation*}%
and kernel 
\begin{equation*}
\ker \mathcal{L}=\mathrm{span}\left\{ \sqrt{M},\sqrt{M}v,\sqrt{M}v_{2},\sqrt{%
M}v_{3},\sqrt{M}\left\vert \mathbf{v}\right\vert ^{2}\right\} .
\end{equation*}

A Fredholm operator $\mathcal{L}$ on a Hilbert space $\mathfrak{h}$ is a
closed operator with finite dimensional kernel and cokernel, and a closed
range. The latter assumption - that the range is closed - is, in fact,
redundant, since any closed operator with a finite dimensional cokernel has
a closed range \cite{Goldberg-66}. Moreover, a self-adjoint operator is, by
definition, densely defined, and, hence, the adjoint operator - and so the
operator itself - is linear and closed \cite{Goldberg-66}. Also, the
orthogonal complement of the range of a closed operator is equal to the
kernel of the adjoint operator \cite{Goldberg-66, Kato}. Hence, for any
self-adjoint operator $\mathcal{L}$ on $\mathfrak{h}$ with a closed range,
the cokernel is equal to the kernel:%
\begin{equation*}
\mathrm{coker}\mathcal{L}=\mathfrak{h/}\mathrm{Im}\mathcal{L}=\left( \mathrm{%
Im}\mathcal{L}\right) ^{\perp }=\ker \mathcal{L}^{\ast }=\ker \mathcal{L}.
\end{equation*}%
The linearized operator $\mathcal{L}$ can be split into a positive
multiplication operator $\nu =\nu (\left\vert \mathbf{v}\right\vert )$ minus
a compact operator $K$ on $\mathfrak{h}_{1}$:%
\begin{equation}
\left( \mathcal{L}f\right) (\mathbf{v})=\nu (\left\vert \mathbf{v}%
\right\vert )f(\mathbf{v})-K(f)(\mathbf{v}),\;f\in \mathrm{D}(\mathcal{L}),
\label{d1}
\end{equation}%
such that for some constants $0<\nu _{-}<\nu _{+}$ 
\begin{equation}
\nu _{-}\left( 1+\left\vert \mathbf{v}\right\vert \right) \leq \nu
(\left\vert \mathbf{v}\right\vert )\leq \nu _{+}\left( 1+\left\vert \mathbf{v%
}\right\vert \right) \text{ for all }\mathbf{v}\in \mathbb{R}^{3}\text{,}
\label{d2}
\end{equation}%
and for some constant $0<\lambda <1$%
\begin{equation}
\int_{\mathbb{R}^{3}}\left( f\mathcal{L}f\right) (\mathbf{v})\ d\mathbf{v}%
\geq \lambda \int_{\mathbb{R}^{3}}\nu (\left\vert \mathbf{v}\right\vert
)f^{2}(\mathbf{v})\ d\mathbf{v}\geq \lambda \nu _{-}\int_{\mathbb{R}%
^{3}}\left( 1+\left\vert \mathbf{v}\right\vert \right) f^{2}(\mathbf{v})\ d%
\mathbf{v}.  \label{d3}
\end{equation}

Turning our attention to the Boltzmann equation for a mixture of $s\geq 2$ ($%
s=1$ corresponds to the case of single species considered above) species $%
\alpha _{1},...,\alpha _{s}$, with masses $m_{\alpha _{1}},...,m_{\alpha
_{s}}$, respectively, the distribution functions (in problem $\left( \ref{e}%
\right) $)\ will be of the form%
\begin{equation*}
F=\left( F_{1},...,F_{s}\right) \text{, where }F_{i}=F_{i}\left( x,\mathbf{v}%
\right) \text{.}
\end{equation*}%
Consider the real Hilbert space $\mathfrak{h}_{s}\mathfrak{:}=\left(
L^{2}\left( d\mathbf{v}\right) \right) ^{s}$, with inner product%
\begin{equation*}
\left( \left. f\right\vert g\right) =\sum_{i=1}^{s}\int_{\mathbb{R}%
^{d}}f_{i}g_{i}\,d\mathbf{v}\text{, }f,g\in \left( L^{2}\left( d\mathbf{v}%
\right) \right) ^{s}\text{,}
\end{equation*}%
and make (after a possible shift in the velocity space, cf. problem $\left( %
\ref{ee}\right) $) a linearization, cf. problem $\left( \ref{e0}\right) $,
around a non-drifting Maxwellian 
\begin{equation*}
M=\left( M_{\alpha _{1}},...,M_{\alpha _{s}}\right) ,\text{ with }M_{\alpha
_{i}}=n_{\alpha _{i}}\left( \dfrac{m_{\alpha _{i}}}{2\pi T}\right)
^{3/2}e^{-m_{\alpha _{i}}\left\vert \mathbf{v}\right\vert ^{2}/\left(
2T\right) },
\end{equation*}%
where $\left\{ n_{\alpha _{1}},...,n_{\alpha _{s}}\right\} \subset \mathbb{R}%
_{+}$ and $T\in \mathbb{R}_{+}$ relate (are equal, or, at least
proportional) to the number densities of the species $\alpha _{1},...,\alpha
_{s}$ and the temperature, respectively. Then the linearized operator $%
\mathcal{L}$ is (considering the molecules to be hard spheres) a
nonnegative, self-adjoint Fredholm operator on $\mathfrak{h}_{s}$ \cite%
{ABT-03, BGPS-13, Be-21a}, with domain%
\begin{equation*}
\mathrm{D}(\mathcal{L})=\left( L^{2}\left( \left( 1+\left\vert \mathbf{v}%
\right\vert \right) d\mathbf{v}\right) \right) ^{s}
\end{equation*}%
and kernel 
\begin{equation*}
\ker \mathcal{L}=\mathrm{span}\left\{ \sqrt{M_{\alpha _{1}}}\mathbf{e}%
_{1},...,\sqrt{M_{\alpha _{s}}}\mathbf{e}_{s},\sqrt{\overline{M}}v,\sqrt{%
\overline{M}}v_{2},\sqrt{\overline{M}}v_{3},\sqrt{\overline{M}}\left\vert 
\mathbf{v}\right\vert ^{2}\right\} ,
\end{equation*}%
where $\overline{M}=\left( m_{\alpha _{1}}^{2}M_{\alpha _{1}},...,m_{\alpha
_{s}}^{2}M_{\alpha _{s}}\right) $ and $\left\{ \mathbf{e}_{1},...,\mathbf{e}%
_{s}\right\} $ is the standard basis of $\mathbb{R}^{s}$. Moreover, the
linearized collision operator$\mathcal{L}=\left( \mathcal{L}_{\alpha
_{1}},...,\mathcal{L}_{\alpha _{s}}\right) $ can be decomposed,
correspondingly to$\ $equations $\left( \ref{d1}\right) $-$\left( \ref{d3}%
\right) $, into a positive multiplication operator $\nu =\nu (\left\vert 
\mathbf{v}\right\vert )=\mathrm{diag}\left( \nu _{1}(\left\vert \mathbf{v}%
\right\vert ),...,\nu _{s}(\left\vert \mathbf{v}\right\vert )\right) $ minus
a compact operator $K=\left( K_{1},...,K_{s}\right) $ on $\mathcal{\mathfrak{%
h}}_{s}$ \cite{BGPS-13}:%
\begin{equation*}
\left( \mathcal{L}f\right) (\mathbf{v})=\nu (\left\vert \mathbf{v}%
\right\vert )f(\mathbf{v})-K(f)(\mathbf{v}),\;f\in \mathrm{D}(\mathcal{L}),
\end{equation*}%
with%
\begin{equation*}
\nu _{-}\left( 1+\left\vert \mathbf{v}\right\vert \right) \leq \nu
_{i}(\left\vert \mathbf{v}\right\vert )\leq \nu _{+}\left( 1+\left\vert 
\mathbf{v}\right\vert \right) \text{ for all }\mathbf{v}\in \mathbb{R}^{3}%
\text{ and }i\in \left\{ 1,...,s\right\} \text{,}
\end{equation*}%
for some constants $0<\nu _{-}<\nu _{+}$ and for some constant $0<\lambda <1$%
\begin{equation*}
\left( \left. f\right\vert \mathcal{L}f\right) \geq \lambda \left( \left.
f\right\vert \nu (\left\vert \mathbf{v}\right\vert )f\right) \geq \lambda
\nu _{-}\left( \left. f\right\vert \left( 1+\left\vert \mathbf{v}\right\vert
\right) f\right) .
\end{equation*}

The remaining of this paper is organized as follows. In Section $\ref{S2}$
we formulate an abstractly formulated half-space problem, motivated by e.g.
the two examples above. The boundary conditions at the interface (at rest)
considered include in addition to those of complete absorption presented
above, much more general ones; cf. the boundary conditions for the
linearized Boltzmann equation presented in \cite[p. 164]{Cercignani-88}. The
main results, including an existence result, Theorem $\ref{T0}$, which tells
that, for a certain number of conditions on the indata at the interface,
there exists a unique solution converging at exponential speed as $%
x\rightarrow \infty $, are presented in Section $\ref{S3}$. In Section $\ref%
{S4}$, a related penalized problem, which is proved to have a unique
solution for any indata, is considered. Sequentially, in Section $\ref{S5}$,
it is proved that under a certain number of conditions on the indata at the
interface, the unique solution to the penalized problem is also a solution
to the original problem. Regime transitions, related to the half-space
problem of evaporation and condensation of gases, are considered in Section $%
\ref{S6}$. The regime transitions, corresponding to the transition between
subsonic and supersonic evaporation/condensation or the transition between
evaporation and condensation, take place at some degenerate values (zero,
or, plus/minus "speed of sound") of the parameter $u$ (the velocity of the
flow - in $x$-direction - at the far end), where the number of conditions,
needed to be imposed on the indata for existence of a unique solution
(stated in Theorem $\ref{T0}$), changes. In general, the exponential decay
is not uniform in any neighborhood of a degenerate value, since, slowly
varying modes may occur as the flow velocity $u$ approaches the degenerate
value (from below). However, by posing some extra condition(s) on the indata
at the interface (before the flow velocity $u$ reaches the degenerate
value), such that the number of conditions on the indata to obtain existence
of a unique solution is the same in a neighborhood of the degenerate value,
the slowly varying modes can be eliminated. Then a uniform exponential decay
in some neighborhood of the degenerate value (Theorems $\ref{T2a}$ and $\ref%
{T2}$) is obtained. In the Appendix the degenerate values of the parameter $%
u $, including the speed of sound, where the regime transitions take place,
together with some important orthogonal basis of the kernel of the
linearized operator (cf. the orthogonality properties $\left( \ref{c2}%
\right) $ below), are presented for some particular variants of the
Boltzmann equation: for monatomic single species and mixtures, as well as,
corresponding cases for polyatomic molecules, and also the quantum variant
for bosons and fermions. The linearized Boltzmann collision operator for
monatomic single species, as well as, mixtures satisfies, for hard spheres,
the assumed properties on the linearized operator in the abstract problem 
\cite{Gr-63, Cercignani-88, BGPS-13,Be-21a}. Some recent corresponding
results for polyatomic molecules can be found in \cite{Be-21a,Be-21b}.

\section{Abstract formulation of the problem \label{S2}}

Let $\mathfrak{h}$ be a real Hilbert space, with inner product $\left(
\left. \cdot \right\vert \cdot \right) $ and denote by $\mathfrak{L}(%
\mathcal{\mathfrak{h}})$ the set of all linear operators on $\mathcal{%
\mathfrak{h}}$. Consider the steady equation 
\begin{equation*}
B\frac{\partial f}{\partial x}+\mathcal{L}f=S,
\end{equation*}%
where $f\equiv f(x,\cdot )\in \mathfrak{h}$ for $x>0$, $S=S(x,\cdot )\in
L^{2}\left( \mathbb{R}_{+};\mathfrak{h}\right) ,\ S=S(x,\cdot )\in \mathrm{Im%
}\mathcal{L}$ belongs to the range $\mathrm{Im}\mathcal{L}$ of $\mathcal{L}$
for all $x\in \mathbb{R}_{+}$. Furthermore, the linear operators $\mathcal{L}%
\ $and $B$, $\left\{ \mathcal{L},B\right\} \subset \mathfrak{L}(\mathcal{%
\mathfrak{h}})$, are assumed to satisfy properties \textbf{H1-H3 }below.

\textbf{H1} The linear operator $\mathcal{L}$ is a nonnegative self-adjoint
Fredholm operator, or, equivalently, $\mathcal{L}$ is a nonnegative
self-adjoint (and, hence, densely defined) operator with a finite
dimensional kernel $\ker \mathcal{L}$ and a closed range $\mathrm{Im}%
\mathcal{L}$.

Then $\mathcal{L}$ has a kernel 
\begin{equation*}
\ker \mathcal{L}=\mathrm{span}\left\{ \phi _{1},...,\phi _{n}\right\} ,\;%
\mathrm{dim}\left( \ker \mathcal{L}\right) =n,
\end{equation*}%
for some $\left\{ \phi _{1},...,\phi _{n}\right\} \subset \mathfrak{h}$; and
a domain $\mathrm{D}(\mathcal{L})$, such that 
\begin{equation*}
\mathcal{L}^{\ast }=\mathcal{L}\geq 0,\text{ }\overline{\mathrm{D}(\mathcal{L%
})}=\mathfrak{h}=\ker \mathcal{L}\oplus \mathrm{Im}\mathcal{L}.
\end{equation*}

\textbf{H2} The linear operator $B$ is a self-adjoint non-singular operator,
such that the domain of $\mathcal{L}$ is a subset of the domain of $B$;%
\begin{equation*}
B^{\ast }=B,\;\ker B=\left\{ 0\right\} ,\;\mathrm{D}(\mathcal{L})\subseteq 
\mathrm{D}(B).
\end{equation*}

Denote by $E\equiv E(d\lambda )$ the spectral measure of $B$ and introduce
the projections 
\begin{equation*}
P_{+}:=\int_{0}^{\infty }E(d\lambda )\text{,}\;P_{-}:=\int_{-\infty
}^{0}E(d\lambda ).
\end{equation*}%
Then the following decomposition may be introduced%
\begin{equation*}
B=B^{+}-B^{-},\;\left\vert B\right\vert =B^{+}+B^{-},\;\text{with }B^{\pm
}=\pm BP_{\pm }.
\end{equation*}%
Furthermore, denote%
\begin{equation*}
\left( \left. \cdot \right\vert \cdot \right) _{\pm }=\left. \left( \left.
\cdot \right\vert \cdot \right) \right\vert _{\mathcal{\mathfrak{h}}_{\pm }},%
\text{ where }\mathcal{\mathfrak{h}}_{\pm }=P_{\pm }\mathcal{\mathfrak{h}}.
\end{equation*}

Remind that, for any closed linear operator $T$ with a closed range, there
exists a positive number $\mu >0$ ("the reduced minimum modulus of $T$" \cite%
[IV-\S 5.1 ]{Kato}) such that \cite{Goldberg-66, Kato}%
\begin{equation*}
\left( \left. Th\right\vert Th\right) \geq \mu \left( \left. h\right\vert
h\right) \;\text{for all}\;h\in \left( \ker T\right) ^{\perp }\cap \mathrm{D}%
(T).
\end{equation*}%
Hence, by assumptions \textbf{H1-H2}, we obtain (by letting $T=\mathcal{L}%
^{1/2}$ - note that $\mathcal{L}^{1/2}$ is a self-adjoint linear operator
with common kernel with $\mathcal{L}$ and a domain containing the domain of $%
\mathcal{L}$; $\ker \mathcal{L}=\ker \mathcal{L}^{1/2}$ and $\mathrm{D}(%
\mathcal{L})\subseteq \mathrm{D}(\mathcal{L}^{1/2})$ \cite{Kato}) that there
exists a positive number $\mu >0$, such that 
\begin{equation}
\left( \left. \mathcal{L}h\right\vert h\right) \geq \mu \left( \left.
h\right\vert h\right) \;\text{for all}\;h\in \mathrm{Im}\mathcal{L}\cap 
\mathrm{D}(\mathcal{L}).  \label{c0}
\end{equation}%
However, here it will be assumed that the operator $\mathcal{L}$ satisfies
an, in general, even stronger condition :

\textbf{H3} There exists a positive number $\gamma >0$, such that $\mathcal{L%
}\geq \gamma \left( 1+\left\vert B\right\vert \right) $ on $\mathrm{Im}%
\mathcal{L}\cap \mathrm{D}(\mathcal{L})$;

\begin{equation}
\left( \left. \mathcal{L}h\right\vert h\right) \geq \gamma \left( \left.
\left( 1+\left\vert B\right\vert \right) h\right\vert h\right) \;\text{for
all}\;h\in \mathrm{Im}\mathcal{L}\cap \mathrm{D}(\mathcal{L}).  \label{c1}
\end{equation}

\begin{remark}
\label{R1a}Assumptions \textbf{H1-H3 }are fulfilled\textbf{\ }for the
linearized Boltzmann collision operator for hard spheres, for (monatomic)
single species, as well as, for (monatomic) multicomponent mixtures, see
Section $\ref{S1}$, with $B=v+u$, where the velocity is given by $\mathbf{v}%
=\left( v,v_{2},v_{3}\right) \in \mathbb{R}^{3}$ and $u\in \mathbb{R}$ is
fixed $\left( \ref{e0}\right) $. Assumptions \textbf{H1-H3 }are also
fulfilled\textbf{\ }for the linearized Boltzmann collision operators for
hard potentials (including hard spheres) if the operator $B$ is bounded,
while this is not the case for soft potentials (the range of the linearized
Boltzmann collision operator for soft potentials is not closed).
\end{remark}

\begin{remark}
\label{R2}If the operator $B$ is bounded, then assumption $\left( \ref{c1}%
\right) $ follows directly by property $\left( \ref{c0}\right) $.
Furthermore, $\mathrm{D}(\mathcal{L})\subseteq \mathfrak{h}=\mathrm{D}(B)$.
\end{remark}

We will consider two different types of boundary conditions at $x=0$.
Introduce operators $P$ and $R$ defined in either of the following two ways
(only considering cases for which such operators exist):

\textbf{H4(a)} $P\in \mathfrak{L}(\mathcal{\mathfrak{h}}_{-}\mathcal{%
\mathfrak{;h}}_{+})$ is a bijective linear operator from $\mathcal{\mathfrak{%
h}}_{-}$ to $\mathcal{\mathfrak{h}}_{+}$, and $R\in \mathfrak{L}(\mathcal{%
\mathfrak{h}}_{+})$ is a linear operator on $\mathcal{\mathfrak{h}}_{+}$,
such that%
\begin{eqnarray}
\left( \left. \left\vert B\right\vert g\right\vert h\right) _{-} &=&\left(
\left. BPg\right\vert Ph\right) _{+},  \notag \\
\left( \left. Rh\right\vert Bg\right) _{+} &=&\left( \left. Bh\right\vert
Rg\right) _{+},  \notag \\
\left( \left. Rg\right\vert BRg\right) _{+} &\leq &\left( \left.
g\right\vert Bg\right) _{+}\text{.}  \label{l1}
\end{eqnarray}

Here and below, we use the simplified notations (for this case):%
\begin{equation*}
Pg=PP_{-}g\text{,}\;Rg=RP_{+}g\text{.}
\end{equation*}

\textbf{H4(b)} $P=\mathbf{1}_{\mathcal{\mathfrak{h}}_{-}}$ is the identity
operator on $\mathcal{\mathfrak{h}}_{-}$, while $R=0\in \mathfrak{L}(%
\mathcal{\mathfrak{h}}_{-}\mathcal{\mathfrak{;h}}_{+})$.

The general formulation of the steady half-space problem of our interest
reads:

\begin{equation}
\left\{ 
\begin{array}{l}
B\dfrac{\partial f}{\partial x}+\mathcal{L}f=S\smallskip \\ 
P_{+}f(0,\cdot )=RPP_{-}f(0,\cdot )+f_{b}%
\end{array}%
\right.  \label{P}
\end{equation}%
for some given $f_{b}\in \mathcal{\mathfrak{h}}_{+}\cap \mathrm{D}(\mathcal{L%
})$, where $e^{\sigma x}f=e^{\sigma x}f(x,\cdot )\in L^{2}\left( \mathbb{R}%
_{+};\mathcal{\mathfrak{h}}\right) $ and $e^{\sigma x}S(x,\cdot )\in
L^{2}\left( \mathbb{R}_{+};\mathcal{\mathfrak{h}}\right) $ for some positive
number $\sigma >0$, and $S=S(x,\cdot )\in \mathrm{Im}\mathcal{L}$ for all $%
x\in \mathbb{R}_{+}$. Substituting%
\begin{equation*}
f=e^{-\sigma x}g,
\end{equation*}%
in problem $\left( \ref{P}\right) $ and introducing the operator%
\begin{equation*}
\widetilde{R}:=P_{+}-RPP_{-}\in \mathfrak{L}\left( \mathfrak{h,h}_{+}\right)
\end{equation*}%
we obtain%
\begin{equation}
\left\{ 
\begin{array}{l}
B\dfrac{\partial g}{\partial x}+\mathcal{L}g-\sigma Bg=e^{\sigma
x}S\smallskip \\ 
\widetilde{R}g(0,\cdot )=f_{b}%
\end{array}%
\right.  \label{LP}
\end{equation}%
for some given $f_{b}\in \mathcal{\mathfrak{h}}_{+}\cap \mathrm{D}(\mathcal{L%
})$, where $g=g(x,\cdot )\in L^{2}\left( \mathbb{R}_{+};\mathcal{\mathfrak{h}%
}\right) $, $S=S(x,\cdot )\in \mathrm{Im}\mathcal{L}$ for all $x\in \mathbb{R%
}_{+}$, and $e^{\sigma x}S(x,\cdot )\in L^{2}\left( \mathbb{R}_{+};\mathcal{%
\mathfrak{h}}\right) $ for some positive number $\sigma >0$.

\begin{remark}
\label{R1}Typically, for Boltzmann(-type) equations (cf. Section $\ref{S1}$
and Remark $\ref{R1a}$), $B=v+u$, while $f=f(x,\mathbf{v})$, with $\mathbf{v}%
=\left( v,v_{2},...,v_{d}\right) \in \mathbb{R}^{d}$ and fixed $u\in \mathbb{%
R}$.

Then $\mathcal{\mathfrak{h_{\pm }}}=\left. \mathcal{\mathfrak{h}}\right\vert
_{v+u\gtrless 0}$ - where, typically, $\mathcal{\mathfrak{h}}=\left(
L^{2}\left( \,d\mathbf{v}\right) \right) ^{s}$ for some positive integer $%
s\geq 1$ -, $Pf(x,\mathbf{v})=f(x,\mathbf{v}_{-})$, with$\;\mathbf{v}_{-}=%
\mathbf{v}-(2\left( v+u\right) ,0,...,0)$, while the linear operator $%
R=R_{u} $ fulfills (here, properties \textbf{H4(a)-(b)} can be combined),
cf. \cite[p. 164]{Cercignani-88} for boundary conditions of linearized
Boltzmann equation (cf. also \cite{LLS-17}) 
\begin{eqnarray}
\left( \left. Rh\right\vert \left( v+u\right) g\right) _{+} &=&\left( \left.
\left( v+u\right) h\right\vert Rg\right) _{+},  \notag \\
\left( \left. Rg\right\vert \left( v+u\right) Rg\right) _{+} &\leq &\left(
\left. g\right\vert \left( v+u\right) g\right) _{+},\;\left( \left. \cdot
\right\vert \cdot \right) _{+}=\left. \left( \left. \cdot \right\vert \cdot
\right) \right\vert _{u+v>0}.  \label{h4}
\end{eqnarray}%
Note that property \textbf{H4(b)} corresponds to complete absorption at the
interface $x=0$.
\end{remark}

\section{Main results \label{S3}}

Let $\left( k^{+},k^{-},l\right) $ be the signature of the restriction of
the quadratic form $\left( \left. B\phi \right\vert \phi \right) $ to the
kernel of $\mathcal{L}$; $k^{+}$, $k^{-}$, and $l$ denote the numbers of
positive, negative, and zero eigenvalues of a symmetric $n\times n$ matrix $%
K $ with elements $k_{ij}=\left( \left. B\phi _{i}\right\vert \phi
_{j}\right) $ for any basis $\left\{ \phi _{1},...,\phi _{n}\right\} $ of
the kernel of $\mathcal{L}$. Due to Sylvester's law of inertia, these
numbers are independent of the choice of basis of the kernel $\ker \mathcal{L%
}$. There exists an orthonormal basis 
\begin{equation*}
\left\{ \phi _{1},...,\phi _{n-l},\psi _{1},...,\psi _{l}\right\}
\end{equation*}%
of the kernel $\ker \mathcal{L}$, such that 
\begin{eqnarray}
\left( \left. \phi _{i}\right\vert \phi _{j}\right) &=&\delta _{ij},\;\left(
\left. \psi _{_{r}}\right\vert \psi _{s}\right) =\delta _{rs},\;\left(
\left. \phi _{i}\right\vert \psi _{r}\right) =0,  \notag \\
\left( \left. B\phi _{i}\right\vert \phi _{j}\right) &=&\beta _{i}\delta
_{ij},\text{ }\left( \left. B\psi _{s}\right\vert \psi _{r}\right) =\left(
\left. B\phi _{i}\right\vert \psi _{r}\right) =0,  \label{c2}
\end{eqnarray}%
with $\beta _{1},...,\beta _{k^{+}}>0$ and $\beta _{k^{+}+1},...,\beta
_{n-l}<0$, cf. \cite{BB-03}. Decompose the kernel of $\mathcal{L}$ in the
following way%
\begin{gather}
\ker \mathcal{L=Z}_{+}\oplus \mathcal{Z}_{-}\oplus \mathcal{Z}_{0}\text{,
where }\mathcal{Z}_{+}:=\mathrm{span}\left\{ \phi _{1},...,\phi
_{k^{+}}\right\} ,\;  \notag \\
\mathcal{Z}_{-}:=\mathrm{span}\left\{ \phi _{k^{+}+1},...,\phi
_{n-l}\right\} ,\;\mathcal{Z}_{0}:=\mathrm{span}\left\{ \psi _{1},...,\psi
_{l}\right\} .  \label{c3}
\end{gather}%
For $\psi \in \mathrm{D}(B)$, there exists $\varphi \in \mathrm{D}(\mathcal{L%
})$, such that $\mathcal{L}\varphi =B\psi $, if and only if 
\begin{equation*}
B\psi \in \mathrm{Im}\mathcal{L}=(\ker \mathcal{L})^{\perp }.
\end{equation*}%
Hence, there exists $\varphi _{r}$ $\in \mathrm{D}(\mathcal{L})$ for each $%
\psi _{r}$, $r\in \left\{ 1,...,l\right\} $, such that%
\begin{equation}
\mathcal{L}\varphi _{r}=B\psi _{r}\text{.}  \label{e10}
\end{equation}%
Without loss of generality, cf. \cite{BB-03}, it can be assumed that%
\begin{equation}
B\varphi _{r}\in \mathcal{Z}_{+}^{\perp }\cap \mathcal{Z}_{-}^{\perp }\text{%
, }\left( \left. B\psi _{r}\right\vert \varphi _{s}\right) =\left( \left. 
\mathcal{L}\varphi _{r}\right\vert \varphi _{s}\right) =\alpha _{r}\delta
_{rs}\text{ with }\alpha _{r}>0.  \label{c4}
\end{equation}

\begin{theorem}
\label{T0}Assume that $S=S(x,\cdot )\in \mathrm{Im}\mathcal{L}$ for all $%
x\in \mathbb{R}_{+}$, $e^{\widetilde{\sigma }x}S(x,\cdot )\in L^{2}\left( 
\mathbb{R}_{+};\mathcal{\mathfrak{h}}\right) $ for some $\widetilde{\sigma }%
>0$, $\widetilde{R}\mathcal{Z}_{\pm }\cup \widetilde{R}\mathcal{Z}%
_{0}\subseteq \mathrm{D}(\mathcal{L})$, and $\dim \left( \mathfrak{h}_{+},%
\mathrm{D}(\mathcal{L})\right) >k^{+}+l$. Then there exists a unique
solution $f$ of the problem $\left( \ref{P}\right) $ such that 
\begin{equation*}
e^{\sigma x}f(x,\cdot )\in L^{2}\left( \mathbb{R}_{+};\mathcal{\mathfrak{h}}%
\right) ,
\end{equation*}%
for some $\sigma >0$, assuming $k^{+}+l$ conditions on $f_{b}\in \mathcal{%
\mathfrak{h}}_{+}\cap \mathrm{D}(\mathcal{L})$.
\end{theorem}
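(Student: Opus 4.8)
The plan is to reduce problem $\left(\ref{P}\right)$ to the equivalent exponentially-shifted problem $\left(\ref{LP}\right)$ via $f=e^{-\sigma x}g$, and to solve $\left(\ref{LP}\right)$ for a suitably small $\sigma>0$ by a variational/Galerkin argument followed by a dimension count that produces exactly $k^{+}+l$ solvability conditions on $f_{b}$. First I would set up the function space $X_{\sigma}=\{g:e^{\sigma x}g\in L^{2}(\mathbb{R}_{+};\mathfrak{h})\}$ together with the natural energy norm coming from $\left(\ref{c1}\right)$, namely $\|g\|^{2}=\int_{0}^{\infty}\bigl(\left.\mathcal{L}g\right|g\bigr)\,dx$ controlling $\int_{0}^{\infty}\bigl(\left.(1+|B|)g\right|g\bigr)\,dx$. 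Multiplying the interior equation by $g$ and integrating by parts in $x$ gives the basic identity $\tfrac12\bigl(\left.Bg(0,\cdot)\right|g(0,\cdot)\bigr)+\int_{0}^{\infty}\bigl(\left.(\mathcal{L}-\sigma B)g\right|g\bigr)\,dx=\int_{0}^{\infty}e^{\sigma x}\bigl(\left.S\right|g\bigr)\,dx+\text{(boundary term from }f_{b})$; by \textbf{H3}, for $\sigma<\gamma$ the bilinear form $\int_{0}^{\infty}\bigl(\left.(\mathcal{L}-\sigma B)g\right|g\bigr)\,dx$ is coercive on the energy space, and the boundary quadratic form is controlled using the inequalities in $\left(\ref{l1}\right)$ (resp. \textbf{H4(b)}), which guarantee $\bigl(\left.Bg(0,\cdot)\right|g(0,\cdot)\bigr)\geq$ a term of the correct sign modulo the data $f_{b}$. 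This is essentially the strategy of \cite{BCN-86,Go-08}; the exponential weight is what makes the convergence as $x\to\infty$ come out automatically once existence in $X_{\sigma}$ is established.

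The heart of the matter is the boundary value problem for the ODE system at the "point" $x=0$: one must match the decaying manifold of the operator equation $B\partial_{x}g=(\sigma B-\mathcal{L})g$ (the space of solutions in $X_{\sigma}$) against the $\dim(\mathfrak{h}_{+})$-dimensional affine condition $\widetilde{R}g(0,\cdot)=f_{b}$. Following the abstract framework of \cite{BB-03,BGS-06}, the space of admissible boundary values $g(0,\cdot)$ of $X_{\sigma}$-solutions has codimension governed by the signature data: the genuinely growing/constant modes living in $\ker\mathcal{L}$ are exactly those associated with $\mathcal{Z}_{+}$ (the $k^{+}$ positive directions of $\bigl(\left.B\phi\right|\phi\bigr)$) and with $\mathcal{Z}_{0}$ together with the generalized eigenvectors $\varphi_{r}$ from $\left(\ref{e10}\right)$–$\left(\ref{c4}\right)$, contributing $l$ further directions — hence $k^{+}+l$ directions that must be suppressed. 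I would show that requiring $e^{\sigma x}g\in L^{2}$ forces these $k^{+}+l$ components to vanish, each vanishing condition being a linear functional of $f_{b}$ once the boundary relation $\widetilde{R}g(0,\cdot)=f_{b}$ is inverted on the remaining part of $\mathfrak{h}_{+}$; the hypothesis $\dim(\mathfrak{h}_{+},\mathrm{D}(\mathcal{L}))>k^{+}+l$ is exactly what ensures this inversion is possible and the quotient problem is still solvable. Uniqueness then follows from the energy identity applied to the difference of two solutions with $f_{b}=0$ and $S=0$: coercivity forces $g\equiv 0$.

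The technical obstacle I expect to be the most delicate is the rigorous construction of the solution in the presence of the unbounded operator $B$ and the merely-closed (not bounded) range assumptions: one cannot simply diagonalize, so the "decaying manifold" must be produced by an approximation scheme — either a finite-dimensional Galerkin truncation in velocity combined with a priori $X_{\sigma}$-bounds from the energy identity, or (as the paper's Section $\ref{S4}$ suggests) by first solving a penalized problem that is well-posed for arbitrary data and then showing that under the $k^{+}+l$ conditions the penalization term drops out. I would follow the latter route: invoke the penalized problem's unique solvability (Section $\ref{S4}$), then impose the $k^{+}+l$ orthogonality conditions on $f_{b}$ (expressed via the $\psi_{r}$, $\varphi_{r}$, and the $\phi_{i}$ spanning $\mathcal{Z}_{+}$) that kill the components obstructing membership in $\mathrm{Im}\,\mathcal{L}$-compatibility and exponential decay, and finally verify (Section $\ref{S5}$) that the resulting function solves $\left(\ref{P}\right)$ with the stated decay. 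The bookkeeping of exactly which $k^{+}+l$ functionals of $f_{b}$ arise, and checking they are linearly independent, is the part most prone to error and will need the orthonormalization $\left(\ref{c2}\right)$–$\left(\ref{c4}\right)$ in an essential way.
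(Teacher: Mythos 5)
You end up endorsing exactly the paper's route --- penalize, solve unconditionally, then remove the penalization under $k^{+}+l$ conditions --- so the overall architecture is right. But two steps as written would not survive being made precise. First, in your opening paragraph you claim that for $\sigma <\gamma $ the form $\int_{0}^{\infty }\left( \left. (\mathcal{L}-\sigma B)g\right\vert g\right) dx$ is coercive ``by \textbf{H3}''. It is not: \textbf{H3} controls $\left( \left. \mathcal{L}h\right\vert h\right) $ only for $h\in \mathrm{Im}\mathcal{L}\cap \mathrm{D}(\mathcal{L})$, while on $\ker \mathcal{L}$ the form reduces to $-\sigma \left( \left. Bq\right\vert q\right) $, which is strictly negative on $\mathcal{Z}_{+}$ and identically zero on $\mathcal{Z}_{0}$ no matter how small $\sigma $ is. This is precisely why the paper adds the penalization terms $\alpha \Pi _{+}B+\beta B\Pi _{0}B$ and spends Lemma \ref{L1} choosing $\alpha =2\sigma $, $\beta $, and $\sigma $ so that $\Lambda $ is coercive on all of $\mathrm{D}(\mathcal{L})$; your fallback to the penalized route is therefore not optional but forced, and the direct variational argument of your first paragraph cannot be repaired without it.

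Second, and more substantively, your argument only yields that the number of conditions on $f_{b}$ is \emph{at most} $k^{+}+l$ (since $\Pi _{+}$ and $\widetilde{\Pi }_{0}$ have ranges of dimensions $k^{+}$ and $l$). The theorem asserts the codimension is exactly $k^{+}+l$, i.e. that $f_{b}\mapsto \left( \Pi _{+}\left( B\mathbb{I}(f_{b})\right) ,\widetilde{\Pi }_{0}\left( B\mathbb{I}(f_{b})\right) \right) $ is onto $\mathcal{Z}_{+}\oplus \mathcal{Z}_{0}$; you flag this as ``the part most prone to error'' but supply no mechanism for it. The paper proves surjectivity constructively in Theorem \ref{T1}: for each $\phi _{i}\in \mathcal{Z}_{+}$ it solves an auxiliary penalized problem with source proportional to $e^{-2\sigma x}\left( B\phi _{i}-\beta _{i}\phi _{i}\right) $ and checks that $h_{i}=g_{i}^{\prime }+e^{-\sigma x}\phi _{i}$ solves the homogeneous penalized problem with $\Pi _{+}\left( Bh_{i}(0,\cdot )\right) =\beta _{i}\phi _{i}\neq 0$ and $\widetilde{\Pi }_{0}\left( Bh_{i}(0,\cdot )\right) =0$; similarly, for each $\psi _{r}\in \mathcal{Z}_{0}$ the ansatz $\widetilde{h}_{r}=\widetilde{g}_{r}^{\prime }+e^{-\sigma x}\left( \varphi _{r}+\tfrac{2\sigma \alpha _{r}}{\beta }\psi _{r}\right) $ yields $\widetilde{\Pi }_{0}\left( B\widetilde{h}_{r}(0,\cdot )\right) =\alpha _{r}\psi _{r}\neq 0$ and $\Pi _{+}\left( B\widetilde{h}_{r}(0,\cdot )\right) =0$. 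Together with Lemma \ref{L3} --- which reduces ``the penalized solution solves the original problem'' to the vanishing of these boundary moments by integrating explicit ODEs for $\left( \left. Bg\right\vert \phi _{i}\right) $, $\left( \left. Bg\right\vert \psi _{s}\right) $, $\left( \left. Bg\right\vert \varphi _{s}\right) $, and which itself needs the preliminary reduction to $S\in \mathrm{Im}\mathcal{L}\cap \mathcal{W}_{0}^{\perp }$ --- this is the actual content of the proof, and it is the part your proposal leaves entirely open.
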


Related problems, in some literature referred to as the Milne and Kramer
problems, see \cite{BCN-86} for the Boltzmann equation, can also be
considered:

\begin{corollary}
\label{C1}Assume that $S=S(x,\cdot )\in \mathrm{Im}\mathcal{L}$ for all $%
x\in \mathbb{R}_{+}$, $e^{\widetilde{\sigma }x}S(x,\cdot )\in L^{2}\left( 
\mathbb{R}_{+};\mathcal{\mathfrak{h}}\right) $ for some $\widetilde{\sigma }%
>0$, $\widetilde{R}\mathcal{Z}_{\pm }\cup \widetilde{R}\mathcal{Z}%
_{0}\subseteq \mathrm{D}(\mathcal{L})$, and $\dim \left( \mathfrak{h}_{+},%
\mathrm{D}(\mathcal{L})\right) >k^{+}+l$. Then there exists a unique
solution $f$ of the problem $\left( \ref{P}\right) $ such that 
\begin{equation*}
e^{\sigma x}\left( f(x,\cdot )-f_{\infty }\right) \in L^{2}\left( \mathbb{R}%
_{+};\mathcal{\mathfrak{h}}\right) ,\;\text{with }f_{\infty }=\,\underset{%
x\rightarrow \infty }{\lim }f(x,\cdot )\in \ker \mathcal{L},
\end{equation*}%
for some $\sigma >0$, if the $k^{-}=n-k^{+}-l$ parameters%
\begin{equation*}
\left\{ \left( \left. f_{\infty }\right\vert \phi _{k^{+}+1}\right)
,...,\left( \left. f_{\infty }\right\vert \phi _{n-l}\right) \right\} ,
\end{equation*}%
are prescribed.
\end{corollary}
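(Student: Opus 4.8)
The plan is to reduce Corollary~\ref{C1} to Theorem~\ref{T0} by an elementary change of unknown that subtracts off the anticipated limit $f_\infty\in\ker\mathcal{L}$. First I would observe that if $f$ is to converge to $f_\infty\in\ker\mathcal{L}$ as $x\to\infty$, then writing $f=f_\infty+h$ turns the equation $B\partial_x f+\mathcal{L}f=S$ into $B\partial_x h+\mathcal{L}h=S$ (since $\mathcal{L}f_\infty=0$ and $\partial_x f_\infty=0$), so $h$ solves an equation of exactly the same form, now with $h\to 0$; the source $S$ is unchanged and still satisfies the hypotheses of Theorem~\ref{T0}. The boundary condition $P_+f(0,\cdot)=RPP_-f(0,\cdot)+f_b$ becomes $P_+h(0,\cdot)=RPP_-h(0,\cdot)+\bigl(f_b-\widetilde{R}f_\infty\bigr)$, i.e.\ $\widetilde{R}h(0,\cdot)=f_b-\widetilde{R}f_\infty=:\widetilde{f}_b$. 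The hypothesis $\widetilde{R}\mathcal{Z}_\pm\cup\widetilde{R}\mathcal{Z}_0\subseteq\mathrm{D}(\mathcal{L})$ guarantees $\widetilde{R}f_\infty\in\mathrm{D}(\mathcal{L})$, so $\widetilde{f}_b\in\mathfrak{h}_+\cap\mathrm{D}(\mathcal{L})$, and Theorem~\ref{T0} applies verbatim: there is a unique $h$ with $e^{\sigma x}h(x,\cdot)\in L^2(\mathbb{R}_+;\mathfrak{h})$ once $k^++l$ conditions are imposed on $\widetilde{f}_b$.

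The second step is the bookkeeping of which $k^++l$ conditions Theorem~\ref{T0} demands on $\widetilde{f}_b$ and how they translate, together with the $k^-$ prescribed components of $f_\infty$, into a well-posed parametrization. The decomposition $\ker\mathcal{L}=\mathcal{Z}_+\oplus\mathcal{Z}_-\oplus\mathcal{Z}_0$ from $(\ref{c3})$ splits $f_\infty=\sum_{i\le k^+}a_i\phi_i+\sum_{k^+<i\le n-l}b_i\phi_i+\sum_{r\le l}c_r\psi_r$. The $k^-=n-k^+-l$ coefficients $b_i=(\,f_\infty\,|\,\phi_i)$ are precisely the prescribed "Milne/Kramer" parameters. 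The remaining $k^++l$ unknowns $\{a_i\}\cup\{c_r\}$ together with $\widetilde{f}_b=f_b-\widetilde{R}f_\infty$ must be pinned down: the point is that the $k^++l$ solvability conditions Theorem~\ref{T0} places on the boundary data become, after substituting $\widetilde{f}_b$, a linear system in the $k^++l$ free coefficients $\{a_i,c_r\}$ (with right-hand side determined by $f_b$ and the prescribed $b_i$). I would argue this system is uniquely solvable because the conditions in Theorem~\ref{T0} are, by construction there, exactly the projections onto the span $\widetilde{R}\mathcal{Z}_+\oplus\widetilde{R}\mathcal{Z}_0$ (the "bad" directions associated with growing or non-decaying modes), and $\widetilde{R}$ is injective on $\mathcal{Z}_+\oplus\mathcal{Z}_0$ — so the coefficient matrix of the system is nonsingular and $\{a_i,c_r\}$, hence $f_\infty$ and $f=f_\infty+h$, is determined uniquely.

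The main obstacle is precisely this last injectivity / nonsingularity point: one must verify that the map $(\{a_i\},\{c_r\})\mapsto$ (the $k^++l$-tuple of solvability conditions evaluated on $f_b-\widetilde{R}f_\infty$) is a bijection. This is where the orthogonality relations $(\ref{c2})$ and $(\ref{c4})$ do the work — the $\beta_i>0$ for $i\le k^+$ and the $\alpha_r>0$ ensure that the relevant pairings $(\,B\phi_i\,|\,\cdot)$ and $(\,B\psi_r\,|\,\varphi_s)=\alpha_r\delta_{rs}$ are nondegenerate on $\mathcal{Z}_+$ and on $\mathcal{Z}_0$ respectively, and the mutual orthogonality $(\,B\phi_i\,|\,\psi_r)=0$ keeps the two blocks decoupled, so the coefficient matrix is (up to the fixed invertible operator $\widetilde{R}$) block-triangular with invertible diagonal blocks. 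Once that is in place, uniqueness of $f_\infty$ follows, and combined with the uniqueness of $h$ from Theorem~\ref{T0} we get uniqueness of $f$; existence is the forward direction of the same construction. Finally, $e^{\sigma x}\bigl(f(x,\cdot)-f_\infty\bigr)=e^{\sigma x}h(x,\cdot)\in L^2(\mathbb{R}_+;\mathfrak{h})$ is immediate, and $f_\infty=\lim_{x\to\infty}f(x,\cdot)\in\ker\mathcal{L}$ follows from the exponential decay of $h$, completing the argument.
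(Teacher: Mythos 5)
The paper states Corollary \ref{C1} without proof, so there is nothing to compare against line by line; your reduction $f=f_\infty+h$, $\widetilde{R}h(0,\cdot)=f_b-\widetilde{R}f_\infty$, followed by an application of Theorem \ref{T0} and a dimension count, is certainly the intended route, and the bookkeeping ($k^-$ prescribed components along $\mathcal{Z}_-$, $k^++l$ free components along $\mathcal{Z}_+\oplus\mathcal{Z}_0$ matched against the $k^++l$ solvability conditions) is correct. The problem is the step you yourself identify as the main obstacle: the invertibility of the map $q\mapsto\bigl(\Pi_+\bigl(B\mathbb{I}(\widetilde{R}q)\bigr),\widetilde{\Pi}_0\bigl(B\mathbb{I}(\widetilde{R}q)\bigr)\bigr)$ on $\mathcal{Z}_+\oplus\mathcal{Z}_0$. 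Your justification — that the conditions of Theorem \ref{T0} "are exactly the projections onto $\widetilde{R}\mathcal{Z}_+\oplus\widetilde{R}\mathcal{Z}_0$" and that the relations $(\ref{c2})$, $(\ref{c4})$ make the coefficient matrix block-triangular with invertible blocks — does not hold up. The entries of that matrix involve the solution operator $\mathbb{I}$ of the penalized half-space problem $(\ref{HPP})$, a nonlocal object that is not determined by the pairings $\left(\left.B\phi_i\right\vert\phi_j\right)$ and $\left(\left.B\psi_r\right\vert\varphi_s\right)$ alone; and in the paper's own proof of Theorem \ref{T1} the directions realizing the codimension are \emph{not} $\widetilde{R}\mathcal{Z}_+\oplus\widetilde{R}\mathcal{Z}_0$ but data of the form $g_{bi}+\widetilde{R}\phi_i$ and $\widetilde{g}_{br}+\widetilde{R}\bigl(\varphi_r+\tfrac{2\sigma\alpha_r}{\beta}\psi_r\bigr)$, the latter involving $\varphi_r\notin\ker\mathcal{L}$. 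Worse, for the $\mathcal{Z}_0$ block the quadratic form you appeal to is identically zero — $\left(\left.B\psi_r\right\vert\psi_s\right)=0$ is the \emph{definition} of $\mathcal{Z}_0$ — so no nondegeneracy argument based on $(\ref{c2})$ can close that block.

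The claim is nevertheless true, but it needs a different argument, e.g. a flux/energy one: if $q=q_++q_0$ lies in the kernel of the map, then by Lemma \ref{L3} there is an exponentially decaying solution $f$ of problem $(\ref{P})$ with $S=0$ and boundary datum $\widetilde{R}q$, while the constant $q$ is another solution with the same datum; for $F=q-f$ the quantity $\left(\left.BF\right\vert F\right)$ is nonincreasing in $x$ (since $\tfrac{d}{dx}\left(\left.BF\right\vert F\right)=-2\left(\left.\mathcal{L}F\right\vert F\right)\leq 0$), is $\leq 0$ at $x=0$ by \textbf{H4} (as in Lemma \ref{L2}), and tends to $\left(\left.Bq_+\right\vert q_+\right)=\sum\beta_ia_i^2\geq 0$, forcing $q_+=0$, $\mathcal{L}F\equiv 0$, hence $F\equiv q_0$ constant with $\widetilde{R}q_0=0$. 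Even then one must invoke injectivity of $\widetilde{R}$ on $\mathcal{Z}_0$ — an assumption that is implicit in the corollary (without it the constant $q_0$ itself violates uniqueness) but which neither you nor the paper states. So: right strategy, correct statement, but the pivotal nonsingularity is asserted rather than proved, and the reason you give for it would not survive scrutiny.
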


\begin{corollary}
\label{C2}Let $S\equiv 0$ and assume that $\widetilde{R}\mathcal{Z}_{\pm
}\cup \widetilde{R}\mathcal{Z}_{0}\subseteq \mathrm{D}(\mathcal{L})$ and $%
\dim \left( \mathfrak{h}_{+},\mathrm{D}(\mathcal{L})\right) >k^{+}+l$. Then
there is a unique solution $f$ of the problem $\left( \ref{P}\right) $ such
that 
\begin{gather*}
e^{\sigma x}\left( f(x,\cdot )-f_{\infty }\right) \in L^{2}\left( \mathbb{R}%
_{+};\mathcal{\mathfrak{h}}\right) ,\text{ }f_{\infty }=\widetilde{\,f}%
_{\infty }+xf_{\infty }^{\prime }\text{,\ } \\
f_{\infty }^{\prime }=\,\underset{x\rightarrow \infty }{\lim }f^{\prime
}(x,\cdot )\text{, }\widetilde{\,f}_{\infty }=\,\underset{x\rightarrow
\infty }{\lim }\left( f(x,\cdot )-xf_{\infty }^{\prime }\right) \in \ker 
\mathcal{L},
\end{gather*}%
for some $\sigma >0$, if the $k^{-}+l=n-k^{+}$ parameters%
\begin{equation*}
\left\{ \left( \left. f_{\infty }\right\vert \phi _{k^{+}+1}\right)
,...,\left( \left. f_{\infty }\right\vert \phi _{n-l}\right) ,\left( \left.
f_{\infty }^{\prime }\right\vert \varphi _{1}\right) ,...,\left( \left.
f_{\infty }^{\prime }\right\vert \varphi _{l}\right) \right\} ,
\end{equation*}%
are prescribed.
\end{corollary}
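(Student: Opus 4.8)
The plan is to reduce this Milne--Kramers-type problem --- along the same lines as Corollary \ref{C1}, but now also allowing the $l$ linearly growing modes --- to the exponential-decay problem of Theorem \ref{T0}, by splitting off an explicit solution of the homogeneous equation that is at most affine in $x$ and carries the prescribed asymptotic data. First I would describe the finite-dimensional space $\mathcal{A}$ of solutions of $B\partial _{x}\mathcal{F}+\mathcal{L}\mathcal{F}=0$ that are polynomial in $x$. Inserting $\mathcal{F}=A+xC$ and separating powers of $x$ forces $\mathcal{L}C=0$ and $\mathcal{L}A=-BC$; the latter is solvable precisely when $BC\in \mathrm{Im}\,\mathcal{L}=\left( \ker \mathcal{L}\right) ^{\perp }$, which by the orthogonality relations $\left( \ref{c2}\right) $ happens if and only if $C\in \mathcal{Z}_{0}$, and then $\left( \ref{e10}\right) $ determines $A$ up to $\ker \mathcal{L}$: writing $C=\sum_{r}c_{r}\psi _{r}$, one has $A=\widehat{f}-\sum_{r}c_{r}\varphi _{r}$ with $\widehat{f}\in \ker \mathcal{L}$. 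The same bookkeeping with $\left( \ref{c2}\right) $--$\left( \ref{c4}\right) $ shows that no genuine $x^{2}$-term can occur, so $\dim \mathcal{A}=n+l$ and every admissible profile is affine, $f_{\infty }=\widetilde{f}_{\infty }+xf_{\infty }^{\prime }$ with $f_{\infty }^{\prime }=\sum_{r}c_{r}\psi _{r}\in \mathcal{Z}_{0}$ and $\widetilde{f}_{\infty }=\widehat{f}-\sum_{r}c_{r}\varphi _{r}$, as in the statement.

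Second, I would put $f=f_{d}+\mathcal{F}$ with $\mathcal{F}\in \mathcal{A}$ to be chosen and $f_{d}$ the new unknown. Since $S\equiv 0$, $f_{d}$ again solves $B\partial _{x}f_{d}+\mathcal{L}f_{d}=0$, and the interface condition becomes $\widetilde{R}f_{d}(0,\cdot )=f_{b}-\widetilde{R}\mathcal{F}(0,\cdot )=:f_{b}^{\sharp }$. Because $\widehat{f}\in \ker \mathcal{L}$, $\varphi _{r}\in \mathrm{D}(\mathcal{L})$ by $\left( \ref{e10}\right) $, and $\widetilde{R}\mathcal{Z}_{\pm }\cup \widetilde{R}\mathcal{Z}_{0}\subseteq \mathrm{D}(\mathcal{L})$ by hypothesis, the datum $f_{b}^{\sharp }$ lies in $\mathfrak{h}_{+}\cap \mathrm{D}(\mathcal{L})$, so Theorem \ref{T0} applies (its remaining hypotheses being either trivial here, as $S\equiv 0$, or inherited): there is a unique exponentially decaying $f_{d}$ exactly when $f_{b}^{\sharp }$ satisfies the $k^{+}+l$ linear solvability conditions of that theorem.

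Third --- and this is where the real work lies --- I would match free parameters against conditions. The space $\mathcal{A}$ is coordinatised by $\widehat{f}=\sum_{i=1}^{n-l}a_{i}\phi _{i}+\sum_{r=1}^{l}e_{r}\psi _{r}\in \ker \mathcal{L}$ and $c=(c_{1},\dots ,c_{l})$, a total of $n+l$ numbers. The $l$ prescribed parameters attached to the $\varphi _{r}$ fix $c$, hence $f_{\infty }^{\prime }$, through the nondegenerate pairing furnished by $\left( \ref{c4}\right) $; the $k^{-}$ prescribed numbers $\left( \left. f_{\infty }\right\vert \phi _{j}\right) $, $j=k^{+}+1,\dots ,n-l$ --- which by $\left( \ref{c2}\right) $ equal $a_{j}-\sum_{r}c_{r}\left( \left. \varphi _{r}\right\vert \phi _{j}\right) $, the $x$-linear part being $\mathcal{Z}_{-}$-orthogonal --- then fix $a_{k^{+}+1},\dots ,a_{n-l}$. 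What remains undetermined in $\mathcal{F}$ is exactly $(a_{1},\dots ,a_{k^{+}},e_{1},\dots ,e_{l})$, a $(k^{+}+l)$-tuple, and it enters $\widetilde{R}\mathcal{F}(0,\cdot )=\widetilde{R}\widehat{f}-\sum_{r}c_{r}\widetilde{R}\varphi _{r}$ only through $\sum_{i=1}^{k^{+}}a_{i}\widetilde{R}\phi _{i}+\sum_{r=1}^{l}e_{r}\widetilde{R}\psi _{r}$, i.e.\ through $\widetilde{R}\mathcal{Z}_{+}\oplus \widetilde{R}\mathcal{Z}_{0}$. Thus the $k^{+}+l$ solvability conditions of Theorem \ref{T0}, imposed on $f_{b}^{\sharp }=f_{b}-\widetilde{R}\mathcal{F}(0,\cdot )$, reduce to a $(k^{+}+l)\times (k^{+}+l)$ linear system for $(a_{1},\dots ,a_{k^{+}},e_{1},\dots ,e_{l})$, and the argument hinges on the invertibility of this system --- equivalently, on the transversality of $\widetilde{R}\mathcal{Z}_{+}\oplus \widetilde{R}\mathcal{Z}_{0}$ to the set of data $f_{b}$ admitting a decaying solution. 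This transversality is the structural fact underlying the count $k^{+}+l$ in Theorem \ref{T0}, and proving it is where assumptions \textbf{H4}, the relations $\left( \ref{l1}\right) $ and $\left( \ref{c2}\right) $--$\left( \ref{c4}\right) $, and Sylvester's law of inertia are genuinely used; I expect this to be the only real obstacle.

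Granting it, for every $f_{b}\in \mathfrak{h}_{+}\cap \mathrm{D}(\mathcal{L})$ and every choice of the $k^{-}+l$ parameters there is a unique $\mathcal{F}\in \mathcal{A}$, hence a unique decaying $f_{d}$ by Theorem \ref{T0}, and $f=f_{d}+\mathcal{F}$ is the asserted solution: with $\sigma >0$ not exceeding the decay rate of $f_{d}$ one gets $e^{\sigma x}\left( f(x,\cdot )-f_{\infty }\right) =e^{\sigma x}f_{d}(x,\cdot )\in L^{2}\left( \mathbb{R}_{+};\mathfrak{h}\right) $, while $\lim_{x\rightarrow \infty }f^{\prime }(x,\cdot )$ and $\lim_{x\rightarrow \infty }\left( f(x,\cdot )-xf_{\infty }^{\prime }\right) $ are the coefficients $C$ and $A$ of $\mathcal{F}$. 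Uniqueness follows from the construction: if $f_{b}=0$ and all $k^{-}+l$ parameters vanish, then $c=0$ and $a_{k^{+}+1}=\dots =a_{n-l}=0$, the linear system is homogeneous so $(a_{1},\dots ,a_{k^{+}},e_{1},\dots ,e_{l})=0$, whence $\mathcal{F}=0$ and then $f_{d}=0$ by Theorem \ref{T0}.
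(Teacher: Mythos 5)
The paper states Corollary \ref{C2} without proof; it is offered as a consequence of Theorem \ref{T0}, whose actual proof is the penalization argument of Sections \ref{S4}--\ref{S5} culminating in Theorem \ref{T1}. So there is no proof of record to compare against. Your architecture --- characterize the affine-in-$x$ solutions $\mathcal{F}=A+xC$ of the homogeneous equation, show that $C\in \mathcal{Z}_{0}$ and $A=\widehat{f}-\sum_{r}c_{r}\varphi _{r}$ with $\widehat{f}\in \ker \mathcal{L}$, exclude $x^{2}$-terms, then subtract $\mathcal{F}$ and apply Theorem \ref{T0} to the exponentially decaying remainder --- is the standard reduction and is surely the intended one; that part of your argument, including the dimension count $\dim \mathcal{A}=n+l$, is correct.

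The genuine gap is exactly where you flag it, and "granting it" does not suffice, because the fact you grant is not what Theorem \ref{T0} supplies. What the proof of Theorem \ref{T1} actually establishes is $\mathrm{codim}(\mathcal{P})=k^{+}+l$, by exhibiting boundary data of the form $g_{b0}+g_{bi}+\widetilde{R}\phi _{i}$ and $g_{b0}+\widetilde{g}_{br}+\widetilde{R}\left( \varphi _{r}+\tfrac{2\sigma \alpha _{r}}{\beta }\psi _{r}\right) $ --- with correctors $g_{bi},\widetilde{g}_{br}$ attached to auxiliary sources $S_{i},\widetilde{S}_{r}$ --- whose images under $\left( \Pi _{+}\oplus \widetilde{\Pi }_{0}\right) \left( B\,\mathbb{I}(\cdot )\right) $ span $\mathcal{Z}_{+}\oplus \mathcal{Z}_{0}$. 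It does not assert that the particular subspace $\widetilde{R}\left( \mathcal{Z}_{+}\oplus \mathcal{Z}_{0}\right) $, which is what your residual parameters $(a_{1},...,a_{k^{+}},e_{1},...,e_{l})$ sweep out, is transversal to $\mathcal{P}$: your $(k^{+}+l)\times (k^{+}+l)$ matrix is a different object, and its invertibility needs its own argument (e.g.\ repeating the $h_{i}$, $\widetilde{h}_{r}$ computations with the data $\widetilde{R}\phi _{i}$, $\widetilde{R}\psi _{r}$ themselves, or a uniqueness/energy argument showing that a decaying solution plus a nonzero element of $\mathcal{A}$ with vanishing prescribed parameters cannot satisfy the homogeneous boundary condition in $\left( \ref{P}\right) $). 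A second, smaller defect: you claim the prescribed numbers $\left( \left. f_{\infty }^{\prime }\right\vert \varphi _{r}\right) =\sum_{s}c_{s}\left( \left. \psi _{s}\right\vert \varphi _{r}\right) $ determine $c$ "through the nondegenerate pairing furnished by $\left( \ref{c4}\right) $", but $\left( \ref{c4}\right) $ controls $\left( \left. B\psi _{r}\right\vert \varphi _{s}\right) =\alpha _{r}\delta _{rs}$, not the Gram matrix $\left( \left( \left. \psi _{s}\right\vert \varphi _{r}\right) \right) _{r,s}$; nothing in $\left( \ref{c2}\right) $--$\left( \ref{c5}\right) $ forces the latter to be invertible (the $\varphi _{r}$ are only fixed up to elements of $\ker \mathcal{L}$, and for $\varphi _{r}\in \mathrm{Im}\,\mathcal{L}$ that matrix vanishes identically), so this step requires either an additional normalization of the $\varphi _{r}$ or pairing $f_{\infty }^{\prime }$ through $B$.
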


Letting the linear operator $B$ being the identity operator on $\mathcal{%
\mathfrak{h}}$, implies that $\mathcal{\mathfrak{h}}_{+}=\mathcal{\mathfrak{h%
}}$ and $\mathcal{\mathfrak{h}}_{-}=\{0\}$. Then, replacing $x$ by $t$ and
putting $f_{b}=f_{0}$, we obtain a spatially homogeneous Cauchy problem:

\begin{equation}
\left\{ 
\begin{array}{l}
\dfrac{\partial f}{\partial t}+\mathcal{L}f=S,\;t>0\smallskip \\ 
f(0,\cdot )=f_{0}%
\end{array}%
\right. ,\;f=f(t,\cdot )\text{, }S=S(t,\cdot )\text{, }f_{0}\in \mathrm{D}(%
\mathcal{L})\text{.}  \label{SHP}
\end{equation}%
Here property $\left( \ref{c1}\right) $ follows by property $\left( \ref{c0}%
\right) $, and the following result follows:

\begin{corollary}
Let $\mathcal{L}$ be a nonnegative self-adjoint operator with a closed range
and a finite dimensional kernel 
\begin{equation*}
\ker \mathcal{L}=\mathrm{span}\left\{ \phi _{1},...,\phi _{n}\right\} ,
\end{equation*}%
for some $\phi _{1},...,\phi _{n}\in \mathfrak{h}$. Furthermore, let $%
S=S(t,\cdot )\in \mathrm{Im}\mathcal{L}$ for all $t\in \mathbb{R}_{+}$, $e^{%
\widetilde{\sigma }t}S(t,\cdot )\in L^{2}\left( \mathbb{R}_{+};\mathfrak{h}%
\right) $\ for some $\widetilde{\sigma }>0$, and $f_{0}\in \mathrm{D}(%
\mathcal{L})$. Then the\ linearized spatially homogeneous initial value
problem $\left( \ref{SHP}\right) $ has a unique solution $f$ such that $%
e^{\sigma t}f(t,\cdot )\in L^{2}\left( \mathbb{R}_{+};\mathcal{\mathfrak{h}}%
\right) $ for some $\sigma >0$ if and only if 
\begin{equation*}
\left( \left. f_{0}\right\vert \phi _{1}\right) =...=\left( \left.
f_{0}\right\vert \phi _{n}\right) =0.
\end{equation*}
\end{corollary}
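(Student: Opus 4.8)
The plan is to use that, when $B=\mathbf{1}$, equation $\left(\ref{SHP}\right)$ decouples along the orthogonal splitting $\mathfrak{h}=\ker\mathcal{L}\oplus\mathrm{Im}\,\mathcal{L}$ provided by \textbf{H1}, and to solve each piece explicitly by means of the semigroup generated by $-\mathcal{L}$. First I would fix $\sigma$ with $0<\sigma<\min(\mu,\widetilde{\sigma})$, where $\mu>0$ is the constant from $\left(\ref{c0}\right)$, and substitute $f=e^{-\sigma t}g$, turning $\left(\ref{SHP}\right)$ into $g_{t}+(\mathcal{L}-\sigma)g=e^{\sigma t}S=:\widetilde{S}$ with $g(0,\cdot)=f_{0}$; here $\widetilde{S}\in L^{2}(\mathbb{R}_{+};\mathfrak{h})$ still takes values in $\mathrm{Im}\,\mathcal{L}$, and the requirement $e^{\sigma t}f\in L^{2}(\mathbb{R}_{+};\mathfrak{h})$ is equivalent to $g\in L^{2}(\mathbb{R}_{+};\mathfrak{h})$.

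Let $\Pi_{0}$ and $\Pi_{1}$ be the orthogonal projections onto $\ker\mathcal{L}$ and $\mathrm{Im}\,\mathcal{L}$. Applying $\Pi_{0}$ and using that $\mathcal{L}g$ and $\widetilde{S}$ lie in $\mathrm{Im}\,\mathcal{L}=(\ker\mathcal{L})^{\perp}$, the kernel component solves the decoupled equation $(\Pi_{0}g)_{t}=\sigma\,\Pi_{0}g$, hence $(\Pi_{0}g)(t,\cdot)=e^{\sigma t}\Pi_{0}f_{0}$. This lies in $L^{2}(\mathbb{R}_{+};\mathfrak{h})$ if and only if $\Pi_{0}f_{0}=0$, i.e. if and only if $(f_{0}|\phi_{1})=\dots=(f_{0}|\phi_{n})=0$. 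This simultaneously establishes necessity of the stated condition and, when it holds, reduces the problem to its range component.

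Assume now $\Pi_{0}f_{0}=0$, so that $f_{0}\in\mathrm{Im}\,\mathcal{L}\cap\mathrm{D}(\mathcal{L})$ and $g=\Pi_{1}g$. The restriction of $\mathcal{L}-\sigma$ to the invariant subspace $\mathrm{Im}\,\mathcal{L}$ is self-adjoint and, by $\left(\ref{c0}\right)$, bounded below by $\mu-\sigma>0$, so $-(\mathcal{L}-\sigma)$ generates a strongly continuous, analytic, contractive semigroup on $\mathrm{Im}\,\mathcal{L}$ with operator norm at most $e^{-(\mu-\sigma)t}$. The Duhamel formula
\[
g(t,\cdot)=e^{-t(\mathcal{L}-\sigma)}f_{0}+\int_{0}^{t}e^{-(t-\tau)(\mathcal{L}-\sigma)}\widetilde{S}(\tau,\cdot)\,d\tau
\]
then yields the unique solution: the first term has $L^{2}(\mathbb{R}_{+};\mathfrak{h})$-norm at most $\|f_{0}\|/\sqrt{2(\mu-\sigma)}$, while Young's convolution inequality (the semigroup norm lying in $L^{1}(\mathbb{R}_{+})$ and $\widetilde{S}\in L^{2}(\mathbb{R}_{+};\mathfrak{h})$) bounds the Duhamel term in $L^{2}(\mathbb{R}_{+};\mathfrak{h})$, so $g\in L^{2}(\mathbb{R}_{+};\mathfrak{h})$. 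Uniqueness follows because the difference of two solutions solves the homogeneous problem with $f_{0}=0$: its kernel part vanishes by the computation of the second paragraph and its range part vanishes by strict dissipativity of $\mathcal{L}-\sigma$. Alternatively, one may verify that $B=\mathbf{1}$ satisfies \textbf{H1--H3} (\textbf{H2} trivially, \textbf{H3} by Remark~\ref{R2} since $B$ is bounded) with signature $(k^{+},k^{-},l)=(n,0,0)$, so that Theorem~\ref{T0} already provides a unique exponentially decaying solution under $n=k^{+}+l$ conditions on $f_{0}$, and the computation above identifies these conditions explicitly.

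The only point requiring care is verifying that the constructed Duhamel solution genuinely belongs to $L^{2}(\mathbb{R}_{+};\mathfrak{h})$ and solves the equation; both are routine consequences of $\mathcal{L}\ge 0$ being self-adjoint (so that $e^{-t\mathcal{L}}$ is an analytic contraction semigroup) together with the uniform gap $\mu-\sigma>0$ on $\mathrm{Im}\,\mathcal{L}$, so I do not expect a genuine obstacle — the substance of the argument is the decoupling, which is exactly what makes the equivalence in the statement transparent.
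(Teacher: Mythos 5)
Your proof is correct, but it takes a genuinely different route from the paper. The paper offers no standalone argument for this corollary: it is presented as an immediate specialization of the general half-space machinery (Theorem~\ref{T0}, proved via the penalized problem of Sections~\ref{S4}--\ref{S5}) to $B=\mathbf{1}$, where $\mathfrak{h}_{+}=\mathfrak{h}$, $\mathfrak{h}_{-}=\{0\}$, the matrix $\left( \left. \phi _{i}\right\vert \phi _{j}\right) $ is a positive definite Gram matrix so that $(k^{+},k^{-},l)=(n,0,0)$, and the $k^{+}+l=n$ conditions $\Pi _{+}\left( B\mathbb{I}(f_{0})\right) =0$ collapse to $\left( \left. f_{0}\right\vert \phi _{i}\right) =0$ because the boundary operator $\widetilde{R}$ reduces to the identity. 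You instead give a direct, self-contained proof: the orthogonal splitting $\mathfrak{h}=\ker \mathcal{L}\oplus \mathrm{Im}\mathcal{L}$ reduces the self-adjoint operator $\mathcal{L}$, the kernel component evolves freely and forces the necessity of the orthogonality conditions, and the range component is controlled by the contraction semigroup generated by $-\mathcal{L}$ together with the spectral gap $\left( \ref{c0}\right) $ and Young's inequality for the Duhamel term. Your approach buys an elementary two-sided (``if and only if'') argument that bypasses the penalization entirely and makes the role of the gap $\mu $ transparent, which matches the fact that the corollary as stated is an equivalence rather than the one-directional count of conditions in Theorem~\ref{T0}; the paper's route keeps the corollary as a consistency check on the general theorem. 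The points you leave implicit --- that the mild solution is a genuine solution (analyticity of the semigroup, or $f_{0}\in \mathrm{D}(\mathcal{L})$ together with the stated regularity of $S$) and that uniqueness holds in the class of solutions merely satisfying $e^{\sigma t}f\in L^{2}\left( \mathbb{R}_{+};\mathfrak{h}\right) $ --- are routine and correctly flagged as such.
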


For (variants of) the Boltzmann equation, cf. Remark $\ref{R1}$, a typical
steady half-space problem reads 
\begin{gather}
\left( v+u\right) \frac{\partial f_{u}}{\partial x}+\mathcal{L}%
f_{u}=S_{u},\;f_{u}=f_{u}(x,\mathbf{v}),\;S_{u}=S_{u}(x,\mathbf{v}),  \notag
\\
f_{u}(0,\mathbf{v})=Rf_{u}(0,\mathbf{v}_{-})+f_{bu}(\mathbf{v})\text{\ for\ }%
v+u>0,\;\mathbf{v}_{-}=\mathbf{v}-(2\left( v+u\right) ,0,...,0),  \notag \\
e^{\sigma _{u}x}f_{u}(x,\mathbf{v})\in L^{2}\left( \mathbb{R}_{+};\left(
L^{2}\left( d\mathbf{v}\right) \right) ^{s}\right) \ \text{and}\;  \notag \\
e^{\sigma _{u}x}S_{u}(x,\mathbf{v})\in L^{2}\left( \mathbb{R}_{+};\left(
L^{2}\left( d\mathbf{v}\right) \right) ^{s}\right) \text{ for some }\sigma
_{u}>0,  \notag \\
S_{u}=S_{u}(x,\mathbf{v})\in \mathrm{Im}\mathcal{L}\text{ for all }x\in 
\mathbb{R}_{+}  \label{P2}
\end{gather}%
for some fixed positive integer $s\geq 1$, $\mathbf{v}=(v,v_{2},...,v_{d})%
\in \mathbb{R}^{d}$, and $u\in \mathbb{R}$. Assuming that the linear
operator $R=R_{u}$ fulfills property $\left( \ref{h4}\right) $ in Remark $%
\ref{R1}$ and properties \textbf{H1} and \textbf{H3} being fulfilled, see
Remark $\ref{R1a}$ for some important cases, Theorem $\ref{T0}$ is
applicable, as well as, Corollaries $\ref{C1}$ and $\ref{C2}$.

The theory could be applied to steady half-space problems of the Boltzmann
equation for hard spheres, for monatomic single species \cite{BCN-86,
GPS-88, UYY-03, Go-08}, as well as, for monatomic binary mixtures \cite%
{ABT-03, BY-12} (note that the stated results in \cite{BY-12} are for\ two
species with equal mass), which in a natural way can be extended to
monatomic multicomponent mixtures (cf. \cite{BGPS-13, BD-16, DJMZ-16, Be-21a}%
). It can also be applied for quantum Boltzmann equations as the one for
excitations near a Bose condensate \cite{AN-13}. Recent results for
polyatomic molecules where the polyatomicity is modelled by either a
discrete, or a continuous, internal energy variable, show that for some
particular collision kernels the linearized collision operators fulfill the
assumed properties also in those cases \cite{Be-21a,Be-21b}. Favorable
applications to discrete velocity Boltzmann models for single species and
multicomponent mixtures (of monatomic molecules, as well as polyatomic
molecules), and quantum extensions for Bosons, Fermions, as well as anyons,
cf. \cite{BB-03,Be-08,Be-15,BV-16,Be-16b,Be-17,Be-18,Be-19}, can be stressed
as well.

\begin{remark}
The theory is also applicable for a BGK-like hard-sphere model of the
Boltzmann equation, cf. \cite[p. 96-97, 208]{Cercignani-88} with the
collision frequency $\nu =\nu (\left\vert \mathbf{v}\right\vert )\ $for hard
spheres, for which%
\begin{equation*}
\mathcal{L}f=\nu (\left\vert \mathbf{v}\right\vert )\left(
f-\sum_{i=1}^{d+2}\left( \left. \nu f\right\vert \phi _{i}\right) \phi
_{i}\right) \text{,}
\end{equation*}%
where $\phi _{1},...,\phi _{d+2}$ is an orthonormal, with respect to $\left(
\left. \nu \phi \right\vert \phi \right) $, basis of \linebreak $\left\{ 
\sqrt{M},\sqrt{M}v,\sqrt{M}v_{2},...,\sqrt{M}v_{d},\sqrt{M}\left\vert 
\mathbf{v}\right\vert ^{2}\right\} $;%
\begin{eqnarray*}
\mathrm{span}\left\{ \phi _{1},...,\phi _{d+2}\right\} &=&\mathrm{span}%
\left\{ \sqrt{M},\sqrt{M}v,\sqrt{M}v_{2},...,\sqrt{M}v_{d},\sqrt{M}%
\left\vert \mathbf{v}\right\vert ^{2}\right\} ,\text{ } \\
\left( \left. \nu \phi _{i}\right\vert \phi _{j}\right) &=&\delta _{ij}\text{%
.}
\end{eqnarray*}
\end{remark}

As noted, Theorem $\ref{T0}\ $is applicable for problem $\left( \ref{P2}%
\right) $. However, in general $\sigma _{u}$ will depend on $u$: Theorem $%
\ref{T0}$ will provide us with existence and an exponential speed of
convergence\ for fixed $u$, while the exponential speed of convergence will,
in general, not be uniform in $u$. Nevertheless, on any bounded interval,
whose closure does not contain any degenerate value $u=u_{0}$, i.e. on any
bounded interval, such that $l=0$ for all $u\ $in its closure, $\sigma _{u}$
can be chosen uniformly. This will still remain true if the lower end point
of the interval will be a degenerate value $u=u_{0}$, i.e. with $l>0$ for $%
u=u_{0}$.\ Contrary, if the closure of a bounded interval contains a
degenerate value $u=u_{0}$ (other than the lower end point), there will, in
general, be no uniform exponential speed of convergence on the interval. As $%
u$ tends to the degenerate value $u_{0}$ from below, $u\rightarrow u_{0-}$,
then, in general (without imposing some additional conditions on $f_{bu}$) $%
\sigma _{u}$ will tend to zero. For $u$ sufficiently close to a degenerate
value $u_{0}$ from below, it may occur some slowly varying mode(s) (see e.g. 
\cite{BG-21} for a more explicit presentation in a case of $l=1$;
corresponding from the transition from between condensation and
evaporation). The slowly varying mode(s), can be cancelled by posing some -
more precisely $l$ - additional condition(s) at the interface (for $u$ less
than $u_{0}$). The following result can be obtained:

\begin{theorem}
\label{T2a}Let $u=u_{0}$ be a degenerate value of $u$, i.e. such that $l>0$
for $u=u_{0}$, assume that $\dim \left( \mathfrak{h}_{+},\mathrm{D}(\mathcal{%
L})\right) >k_{0}^{+}+l$ - $k_{0}^{+}$ equals $k^{+}$ for $u=u_{0}$ -, and
assume that for all $u$ in a neighborhood of $u_{0}$: $S_{u}=S_{u}(x,\mathbf{%
v})\in \mathrm{Im}\mathcal{L}$ for all $x\in \mathbb{R}_{+}$, $e^{\widetilde{%
\sigma }x}S_{u}(x,\mathbf{v})\in L^{2}\left( \mathbb{R}_{+};\left(
L^{2}\left( d\mathbf{v}\right) \right) ^{s}\right) $ for some $\widetilde{%
\sigma }>0$ and $\widetilde{R}_{u}\mathcal{Z}_{\pm }\cup \widetilde{R}_{u}%
\mathcal{Z}_{0}\subseteq \mathrm{D}(\mathcal{L}\mathbf{1}_{u+v>0})$, with $%
\widetilde{R}_{u}=\mathbf{1}_{u+v>0}-R_{u}P$, where $Pf(x,\mathbf{v})=f(x,%
\mathbf{v}_{-})$, while $\mathcal{Z}_{\pm }$ and $\mathcal{Z}_{0}$ are
defined in $\left( \ref{c3}\right) $ (with $B=v+u_{0}$).

Then there exists a positive number $\delta (u_{0})>0$, such that by
imposing $k_{0}^{+}+l$ conditions on $f_{bu}\in \left( L^{2}\left( \left(
1+\left\vert v\right\vert \right) \mathbf{1}_{u+v>0}\,d\mathbf{v}\right)
\right) ^{s}\cap \mathrm{D}(\mathcal{L}\mathbf{1}_{u+v>0})$, there exists a
family $\left\{ f_{u}\right\} _{\left\vert u-u_{0}\right\vert \leq \delta
(u_{0})}$ of unique solutions $f_{u}$ of the problem $\left( \ref{P2}\right) 
$ such that 
\begin{equation*}
e^{\sigma x}f_{u}(x,\mathbf{v})\in L^{2}\left( \mathbb{R}_{+};\left(
L^{2}\left( d\mathbf{v}\right) \right) ^{s}\right)
\end{equation*}%
for some positive number $\sigma >0$, independent of $u$, if $\left\vert
u-u_{0}\right\vert \leq \delta (u_{0})$.
\end{theorem}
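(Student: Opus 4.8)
The plan is to reduce Theorem \ref{T2a} to an application of Theorem \ref{T0} applied \emph{at} the degenerate value $u=u_{0}$, but with an augmented boundary operator designed so that the number of imposed conditions stays equal to $k_{0}^{+}+l$ on a whole neighborhood of $u_{0}$. The crucial observation is that when $u$ crosses $u_{0}$, the signature $(k^{+},k^{-},l)$ of the form $(\,B\phi\,|\,\phi)$ on $\ker\mathcal{L}$ with $B=v+u$ changes: at $u=u_{0}$ there are $l$ zero eigenvalues (the modes $\psi_{1},\dots,\psi_{l}$), and as $u$ moves off $u_{0}$ these $l$ eigenvalues become nonzero, splitting into some going positive and some going negative. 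Thus for $u\ne u_{0}$ nearby, the number of conditions prescribed by Theorem \ref{T0} is $k^{+}+l = k^{+}+0$, and $k^{+}$ takes one of a few values depending on which side of $u_{0}$ we are on and the signs of the perturbed eigenvalues. By adding exactly $l$ extra scalar conditions on $f_{bu}$ — chosen to annihilate the components of the trace along the directions that would otherwise become slowly decaying modes — one forces the effective count to be $k_{0}^{+}+l$ uniformly.

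Concretely, the key steps are as follows. First, fix the orthonormal basis $\{\phi_{1},\dots,\phi_{n-l},\psi_{1},\dots,\psi_{l}\}$ of $\ker\mathcal{L}$ adapted to $B_{0}=v+u_{0}$ as in \eqref{c2}, together with the associated $\varphi_{1},\dots,\varphi_{l}$ solving $\mathcal{L}\varphi_{r}=B_{0}\psi_{r}$ and normalized as in \eqref{c4}. For $u$ near $u_{0}$ write $B_{u}=B_{0}+(u-u_{0})\mathbf{1}$; since $\mathbf{1}$ is bounded and $\ker\mathcal{L}$ is finite-dimensional, the restriction of $(\,B_{u}\phi\,|\,\phi)$ to $\ker\mathcal{L}$ is an analytic (indeed affine) perturbation of the $u_{0}$-form, so its eigenvalues and eigenprojections depend analytically on $u$; in particular the $l$ eigenvalues crossing zero at $u_{0}$ have derivatives $(\,\psi_{r}\,|\,\psi_{r})=1\ne 0$ in the generic case, hence move monotonically. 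Second, define the augmented boundary operator: add to $\widetilde{R}_{u}$ the $l$ linear functionals $h\mapsto(\,h\,|\,\chi_{r}(u))$ where $\chi_{r}(u)\in\mathfrak{h}_{+}(u)\cap\mathrm{D}(\mathcal{L})$ is a smooth family of test functions pairing nontrivially with the perturbed near-zero modes (built from the analytic eigenvectors and the $\varphi_{r}$'s). Third, verify that with these $l$ extra conditions the hypotheses of Theorem \ref{T0} hold for every $u$ in a small interval $|u-u_{0}|\le\delta(u_{0})$ — the dimension hypothesis $\dim(\mathfrak{h}_{+},\mathrm{D}(\mathcal{L}))>k^{+}+l$ survives by the strict inequality assumed at $u_{0}$ and upper semicontinuity of $k^{+}$, and the regularity hypotheses on $S_{u}$ and $\widetilde{R}_{u}\mathcal{Z}_{\pm}\cup\widetilde{R}_{u}\mathcal{Z}_{0}$ are assumed uniformly. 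Fourth, for each such $u$ apply Theorem \ref{T0} to get a unique $f_{u}$ with $e^{\sigma_{u}x}f_{u}\in L^{2}$.

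The main obstacle — and the heart of the proof — is showing that $\sigma_{u}$ can be taken \emph{independent} of $u$ on $|u-u_{0}|\le\delta(u_{0})$, i.e.\ uniform exponential decay. The decay rate produced by Theorem \ref{T0} is governed by the spectral gap of the generator of the solution semigroup: roughly, $f_{u}$ is spanned by exponential modes $e^{-\lambda x}h$ with $(\mathcal{L}-\lambda B_{u})h=0$, and $\sigma_{u}$ is the infimum of the positive $\mathrm{Re}\,\lambda$ over the decaying modes retained in the solution. As $u\to u_{0}$, the hydrodynamic modes associated to $\psi_{1},\dots,\psi_{l}$ develop eigenvalues $\lambda\to 0$ (these are exactly the "slowly varying modes"), so the naive $\sigma_{u}\to 0$. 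The point of the extra $l$ conditions is precisely to project these troublesome small-$\lambda$ modes out of the solution, so that the retained spectrum is bounded away from the imaginary axis uniformly. To make this rigorous I would: (i) set up the characteristic equation $(\mathcal{L}-\lambda B_{u})h=0$ and, via an Evans-function / Lyapunov–Schmidt reduction on $\ker\mathcal{L}\oplus\mathrm{span}\{\varphi_{r}\}$ following the construction in \cite{BB-03}, identify the finitely many small eigenvalues $\lambda_{j}(u)$ with $\lambda_{j}(u_{0})=0$ and show they depend continuously on $u$ with nonzero derivative; (ii) show the remaining "fast" spectrum stays in $\{\mathrm{Re}\,\lambda\ge c\}$ for some $c>0$ uniformly in $u$ near $u_{0}$, using property \textbf{H3} (the coercivity $\mathcal{L}\ge\gamma(1+|B|)$ is what prevents the fast spectrum from accumulating at $0$, and it is $u$-uniform once $B_{u}=v+u$ is controlled on the relevant neighborhood); (iii) check that the augmented boundary operator is exactly transversal to the small-eigenvalue eigenspaces, so the constructed $f_{u}$ has no component on them and therefore decays at rate $\ge c$; then take $\sigma=c$ and $\delta(u_{0})$ small enough that all the continuity statements hold. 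A secondary technical point is the genericity of the assumption that the $l$ crossing eigenvalues have nonzero slope at $u_{0}$; since their derivatives are $(\,\psi_{r}\,|\,\psi_{r})$ in the diagonalizing basis, this holds automatically once $B_{u}-B_{u_{0}}=(u-u_{0})\mathbf{1}$ with $\mathbf{1}$ positive on $\mathcal{Z}_{0}$, which is the Boltzmann situation of \eqref{e0}, so no extra hypothesis is needed there.
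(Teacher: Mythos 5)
Your heuristic is the right one --- the $l$ slowly varying modes attached to $\mathcal{Z}_{0}$ must be projected out of the solution by $l$ extra conditions on the indata --- but the machinery you propose is not the paper's, and as written it has a genuine gap. Your uniformity argument rests on (a) representing $f_{u}$ as a superposition of exponential modes $e^{-\lambda x}h$ with $(\mathcal{L}-\lambda B_{u})h=0$, and (b) a uniform-in-$u$ spectral gap for the ``fast'' part of this generalized eigenvalue problem. Neither is established, and neither is easy in this setting: $\mathcal{L}$ has essential spectrum, $B_{u}=v+u$ is an unbounded multiplication operator whose spectrum contains $0$, and no completeness of the exponential modes (no Evans-function framework) is available for the abstract problem $(\ref{P})$ --- the entire point of the penalization method of Sections $\ref{S4}$--$\ref{S5}$ is to avoid such a spectral decomposition. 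Your steps (i)--(iii) are therefore a program rather than a proof; in particular the transversality claim in (iii) is asserted without argument, and the extra functionals $\chi_{r}(u)$ are never constructed. Note also that in the paper the extra conditions are not direct linear functionals of $f_{bu}$ but conditions on the trace of the solution of the penalized problem, $\widetilde{\Pi }_{0}^{0}\left( \left( v+u\right) \mathbb{I}(f_{bu})\right) =0$, i.e.\ they involve the solution operator $(\ref{lso})$.

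The paper's actual mechanism is an energy estimate, and it is worth seeing where the $u$-dependence really enters. In Lemma $\ref{L1}$ the coercivity constant and the rate $\sigma $ in $(\ref{e5})$ degenerate only through $\beta _{\min }$ in $(\ref{e4})$ and $\widehat{\beta }_{\max }$ in $(\ref{e3a})$; as $u\rightarrow u_{0}^{-}$ the quantities $\widetilde{\beta }_{i}(u)=\left( \left. \left( v+u\right) \psi _{i0}\right\vert \psi _{i0}\right) =u-u_{0}$ tend to $0^{-}$, so $\beta _{\min }\rightarrow 0$ and $\sigma \rightarrow 0$. By Lemma $\ref{L3}$ any solution of the undamped problem has $q_{+}=q_{0}=0$, and the $l$ extra conditions $\widetilde{\Pi }_{0}^{0}\left( \left( v+u\right) \mathcal{I}(f_{bu})\right) =0$ force the coefficients $b_{k_{0}^{+}+1},\dots ,b_{k_{0}^{+}+l}$ of $q_{-}$ along the degenerating directions to vanish as well; re-running the estimate of Lemma $\ref{L1}$ on the reduced set of kernel components removes the offending $\widetilde{\beta }_{i}$ from $\beta _{\min }$ and yields a $\sigma $ uniform on $\left\vert u-u_{0}\right\vert \leq \delta (u_{0})$. (A further small inaccuracy: the $l$ eigenvalues do not ``split into some going positive and some going negative''; by $(\ref{c2})$ they all equal $u-u_{0}$, so all $l$ cross zero together, which is why $k^{+}$ jumps by exactly $l$ at $u_{0}$, cf.\ Remark $\ref{RH}$.) If you want to salvage your spectral route you would first need a completeness/expansion theorem for the exponential modes in this abstract setting, which is a substantially harder problem than the theorem itself.
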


\section{Penalized problem \label{S4}}

Let $\left\{ \phi _{1},...,\phi _{n-l},\psi _{1},...,\psi _{l}\right\} $ be
an orthonormal basis of the kernel of $\mathcal{L}$, such that relations $%
\left( \ref{c2}\right) $ are satisfied, and let $\left\{ \varphi
_{1},...,\varphi _{l}\right\} \subset \mathrm{D}(\mathcal{L})$, be such that%
\begin{equation*}
\mathcal{L}\varphi _{r}=B\psi _{r},
\end{equation*}%
with relations $\left( \ref{c4}\right) $ satisfied. Without loss of
generality, it may be assumed that 
\begin{equation}
\left( \left. B\varphi _{r}\right\vert \varphi _{s}\right) =0\ \text{for all 
}\left\{ r,s\right\} \subseteq \left\{ 1,...,l\right\} .  \label{c5}
\end{equation}%
Indeed, if the contrary, replace $\left\{ \varphi _{1},...,\varphi
_{l}\right\} $ with $\left\{ \widehat{\varphi }_{1},...,\widehat{\varphi }%
_{l}\right\} $, where 
\begin{eqnarray*}
\widehat{\varphi }_{r} &=&\varphi _{r}-\sum_{s=r+1}^{l}\frac{\left( \left.
B\varphi _{r}\right\vert \varphi _{s}\right) }{\alpha _{s}}\psi _{s}-\frac{%
\left( \left. B\varphi _{r}\right\vert \varphi _{r}\right) }{2\alpha _{r}}%
\psi _{r}\text{ for }r\in \left\{ 1,...,l\right\} \text{, with} \\
\alpha _{i} &=&\left( \left. B\psi _{i}\right\vert \varphi _{i}\right)
=\left( \left. L\varphi _{i}\right\vert \varphi _{i}\right) >0,
\end{eqnarray*}%
and relations $\left( \ref{c4}\right) $,$\left( \ref{c5}\right) $ will be
satisfied. Relations $\left( \ref{c5}\right) $ will be of use in the
forthcoming section.

If $l\neq 0$, denote%
\begin{equation*}
\mathbf{\psi }=\left( \psi _{1},...,\psi _{l}\right) ,\text{\ }\mathbf{%
\varphi }=\left( \varphi _{1},...,\varphi _{l}\right) ,
\end{equation*}%
and introduce the symmetric $l\times l$ matrix $\left\langle \mathbf{\psi }%
\otimes \mathbf{\psi }\right\rangle _{B^{2}}$ with elements 
\begin{equation*}
\left( \left\langle \mathbf{\psi }\otimes \mathbf{\psi }\right\rangle
_{B^{2}}\right) _{rs}=\left( \left. B^{2}\psi _{r}\right\vert \psi
_{s}\right) =\left( \left. B\psi _{r}\right\vert B\psi _{s}\right) =\left(
\left. \mathcal{L}\varphi _{r}\right\vert \mathcal{L}\varphi _{s}\right) .
\end{equation*}%
The matrix $\left\langle \mathbf{\psi }\otimes \mathbf{\psi }\right\rangle
_{B^{2}}$ is symmetric, and hence, there exists an orthogonal $l\times l$
matrix $U$, such that, cf. \cite{BB-03},%
\begin{equation*}
\left\langle \widetilde{\mathbf{\psi }}\otimes \widetilde{\mathbf{\psi }}%
\right\rangle _{B^{2}}=\left\langle U\mathbf{\psi }\otimes U\mathbf{\psi }%
\right\rangle _{B^{2}}=U^{T}\left\langle \mathbf{\psi }\otimes \mathbf{\psi }%
\right\rangle _{B^{2}}U=\mathrm{diag}(\gamma _{1},...,\gamma _{l}),
\end{equation*}%
for some real numbers $\gamma _{1},...,\gamma _{l}$, or, equivalently,%
\begin{gather}
\left( \left. B^{2}\widetilde{\psi }_{r}\right\vert \widetilde{\psi }%
_{s}\right) =\left( \left. B\widetilde{\psi }_{r}\right\vert B\widetilde{%
\psi }_{s}\right) =\left( \left. \mathcal{L}\widetilde{\varphi }%
_{r}\right\vert \mathcal{L}\widetilde{\varphi }_{s}\right) =\gamma
_{r}\delta _{rs},  \notag \\
\text{with }\widetilde{\mathbf{\psi }}:=U\mathbf{\psi }=(\widetilde{\psi }%
_{1},...,\widetilde{\psi }_{l})\ \text{and}\;\widetilde{\mathbf{\varphi }}:=U%
\mathbf{\varphi }=\left( \widetilde{\varphi }_{1},...,\widetilde{\varphi }%
_{l}\right) ,  \label{e11}
\end{gather}%
where, without loss of generality,%
\begin{equation}
\gamma _{1}\geq ...\geq \gamma _{l}>0.  \label{e11b}
\end{equation}%
It may be stressed that, by construction, 
\begin{equation}
\widetilde{\mathbf{\psi }}^{T}\widetilde{\mathbf{\psi }}=\mathbf{\psi }%
^{T}U^{T}U\mathbf{\psi =\psi }^{T}\mathbf{\psi }=I_{l}\text{,}  \label{e12}
\end{equation}%
where $I_{l}$ denote the $l\times l$ identity matrix, or, in other words, $%
\left\{ \widetilde{\psi }_{1},...,\widetilde{\psi }_{l}\right\} $ is an
orthonormal basis of $\mathcal{Z}_{0}$.

Otherwise, if $l=0$, let $\gamma _{1}=0$.

Define the linear operators $\Pi _{+}\in \mathfrak{L}(\mathcal{\mathfrak{h;}Z%
}_{+})$ and $\Pi _{0}\in \mathfrak{L}(\mathcal{\mathfrak{h;}Z}_{0})$\ by%
\begin{equation}
\Pi _{+}:=\sum_{i=1}^{k^{+}}\left( \left. \cdot \right\vert \phi _{i}\right)
\phi _{i}\text{; }\Pi _{0}:=\sum_{r=1}^{l}\frac{\left( \left. \cdot
\right\vert \varphi _{r}\right) }{\left( \left. \varphi _{r}\right\vert 
\mathcal{L}\varphi _{r}\right) ^{2}}\varphi _{r}\text{,}  \label{pr}
\end{equation}%
and consider the penalized problem,\textbf{\ }cf. \cite{UYY-03, Go-08, BG-21}%
:\textbf{\ }%
\begin{equation*}
\left\{ 
\begin{array}{l}
B\dfrac{\partial g}{\partial x}+\mathcal{L}g-\sigma Bg=e^{\sigma x}S-\alpha
\Pi _{+}\left( Bg\right) -\beta B\Pi _{0}\left( Bg\right) ,\;x>0\text{,}%
\smallskip \\ 
\widetilde{R}g(0,\cdot )=f_{b},\text{ }\widetilde{R}=P_{+}-RPP_{-},%
\end{array}%
\right.
\end{equation*}%
or, equivalently, by introducing the linear operator%
\begin{equation}
\Lambda :=\mathcal{L}-\sigma B+\alpha \Pi _{+}B+\beta B\Pi _{0}B,  \label{g1}
\end{equation}%
\begin{equation}
\left\{ 
\begin{array}{l}
B\dfrac{\partial g}{\partial x}+\Lambda g=e^{\sigma x}S,\;x>0\text{,}%
\smallskip \\ 
\widetilde{R}g(0,\cdot )=f_{b},\text{ }\widetilde{R}=P_{+}-RPP_{-}.%
\end{array}%
\right. \text{. }  \label{PP}
\end{equation}

\begin{lemma}
\label{L1}For appropriately chosen positive constants $\alpha $, $\beta $,
and $\sigma $ the operators $\Lambda $ and $\Lambda ^{\star }$ are coercive
on $\mathrm{D}(\mathcal{L})$. Indeed, there exists a positive number $\mu
=\mu \left( \alpha ,\beta ,\sigma \right) >0$ such that%
\begin{equation*}
\left( \left. \Lambda ^{\star }f\right\vert f\right) =\left( \left. \Lambda
f\right\vert f\right) \geq \mu \left( \left. f\right\vert f\right) \ \text{%
for all}\;f\in \mathrm{D}(\mathcal{L}).
\end{equation*}
\end{lemma}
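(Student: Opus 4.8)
The plan is to bound the symmetric quadratic form $f\mapsto(\Lambda f\mid f)$ from below on $\mathrm{D}(\mathcal{L})$; this already settles the statement, because $\Lambda-\mathcal{L}$ is built from $B$ composed with the bounded self-adjoint operators $-\sigma I+\alpha\Pi_{+}$ and $\beta\Pi_{0}$, so on the real Hilbert space $\mathfrak{h}$ one has $(\Lambda^{\star}f\mid f)=(\Lambda f\mid f)$ for every $f\in\mathrm{D}(\mathcal{L})$ (which lies in the domains of both $\Lambda$ and $\Lambda^{\star}$). Fix $f\in\mathrm{D}(\mathcal{L})$ and write $f=f_{0}+f_{1}$ with $f_{0}\in\ker\mathcal{L}$ and $f_{1}\in\mathrm{Im}\,\mathcal{L}\cap\mathrm{D}(\mathcal{L})$ (using $\mathfrak{h}=\ker\mathcal{L}\oplus\mathrm{Im}\,\mathcal{L}$), and expand $f_{0}=\sum_{i=1}^{n-l}a_{i}\phi_{i}+\sum_{r=1}^{l}b_{r}\psi_{r}$ in the orthonormal basis satisfying $\left(\ref{c2}\right)$; then $(f\mid f)=\Vert f_{1}\Vert^{2}+\sum_{i}a_{i}^{2}+\sum_{r}b_{r}^{2}$, and $f_{0},f_{1}\in\mathrm{D}(B)$ by \textbf{H2}, so every inner product below is defined.

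Next I would compute $(\Lambda f\mid f)$ term by term. By \textbf{H3}, $(\mathcal{L}f\mid f)=(\mathcal{L}f_{1}\mid f_{1})\geq\gamma\Vert f_{1}\Vert^{2}+\gamma(|B|f_{1}\mid f_{1})$. From $\left(\ref{c2}\right)$ one gets $(Bf_{0}\mid f_{0})=\sum_{i=1}^{k^{+}}\beta_{i}a_{i}^{2}-\sum_{i=k^{+}+1}^{n-l}|\beta_{i}|a_{i}^{2}$ and, for $i\leq k^{+}$, $(Bf\mid\phi_{i})=\beta_{i}a_{i}+(f_{1}\mid B\phi_{i})$, hence $\alpha(\Pi_{+}Bf\mid f)=\alpha\sum_{i=1}^{k^{+}}\beta_{i}a_{i}^{2}+\alpha\sum_{i=1}^{k^{+}}(f_{1}\mid B\phi_{i})a_{i}$. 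From $\left(\ref{c4}\right)$ (using $B\varphi_{r}\in\mathcal{Z}_{+}^{\perp}\cap\mathcal{Z}_{-}^{\perp}$ and $(B\psi_{s}\mid\varphi_{r})=\alpha_{r}\delta_{rs}$) one gets $(Bf\mid\varphi_{r})=\alpha_{r}b_{r}+(f_{1}\mid B\varphi_{r})$, so $\beta(B\Pi_{0}Bf\mid f)=\beta\sum_{r=1}^{l}\bigl(b_{r}+\alpha_{r}^{-1}(f_{1}\mid B\varphi_{r})\bigr)^{2}\geq0$. Writing $-\sigma(Bf\mid f)=-\sigma(Bf_{0}\mid f_{0})-2\sigma(Bf_{0}\mid f_{1})-\sigma(Bf_{1}\mid f_{1})$ and assembling everything reveals the mechanism: the drift $-\sigma B$ makes the $\mathcal{Z}_{-}$-directions coercive (coefficient $+\sigma|\beta_{i}|$), the penalty $\alpha\Pi_{+}B$ makes the $\mathcal{Z}_{+}$-directions coercive (coefficient $(\alpha-\sigma)\beta_{i}$ after combining with $-\sigma(Bf_{0}\mid f_{0})$, hence $>0$ once $\alpha>\sigma$), and the penalty $\beta B\Pi_{0}B$ makes the $\mathcal{Z}_{0}$-directions coercive — precisely the three subspaces of $\ker\mathcal{L}$ on which $(B\phi\mid\phi)$ fails to be positive.

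What then remains is to absorb the four indefinite remainders — $\alpha(f_{1}\mid B\phi_{i})a_{i}$, the cross term inside the $\beta$-square, $-2\sigma(Bf_{0}\mid f_{1})$, and $-\sigma(Bf_{1}\mid f_{1})$ — into the good terms $\gamma\Vert f_{1}\Vert^{2}+\gamma(|B|f_{1}\mid f_{1})$. For the last, $|(Bf_{1}\mid f_{1})|\leq(|B|f_{1}\mid f_{1})$. For the others, writing $B=|B|^{1/2}J|B|^{1/2}$ with $J$ the sign of $B$ and applying Cauchy--Schwarz, each is at most a fixed finite constant times $(|B|f_{1}\mid f_{1})^{1/2}$ times the relevant kernel coordinate, the constants $(|B|\phi_{i}\mid\phi_{i})^{1/2}$, $(|B|\varphi_{r}\mid\varphi_{r})^{1/2}$ and the norm of the (finite-dimensional, hence bounded) quadratic form $(|B|\,\cdot\mid\cdot)$ on $\ker\mathcal{L}$ being finite since $\ker\mathcal{L}\subseteq\mathrm{D}(B)$. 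A Young inequality on each then yields
\[
(\Lambda f\mid f)\ \geq\ \gamma\Vert f_{1}\Vert^{2}+c_{0}(|B|f_{1}\mid f_{1})+\sum_{i=1}^{n-l}c_{i}a_{i}^{2}+\sum_{r=1}^{l}d_{r}b_{r}^{2},
\]
where, once the (fixed, small) Young parameters are chosen and up to $O(\sigma^{2})$ corrections, $c_{0}=\gamma-(\text{sum of Young parameters})-O(\sigma)$, $c_{i}=\tfrac{1}{2}(\alpha-\sigma)\beta_{i}$ for $i\leq k^{+}$, $c_{i}=\sigma|\beta_{i}|$ for $i>k^{+}$, and $d_{r}=\tfrac{1}{2}\beta$.

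Finally the constants are fixed in order: first the finitely many Young parameters, small fixed numbers with total below $\gamma/2$; then, e.g., $\alpha=2\sigma$ and $\beta=\sigma$; then $\sigma>0$ small enough that each of the finitely many numbers $\gamma$, $c_{0}$, $c_{1},\dots,c_{n-l}$, $d_{1},\dots,d_{l}$ is positive — possible because in each of them the stated leading term (the constant $\gamma$, or a positive multiple of $\sigma$) dominates the remaining corrections once $\sigma$ is small. Taking $\mu$ to be the minimum of these positive numbers and using $(f\mid f)=\Vert f_{1}\Vert^{2}+\sum_{i}a_{i}^{2}+\sum_{r}b_{r}^{2}$ gives $(\Lambda^{\star}f\mid f)=(\Lambda f\mid f)\geq\mu(f\mid f)$. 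I expect the bookkeeping of the middle two steps to be the main obstacle: one must check that the three contributions $-\sigma B$, $\alpha\Pi_{+}B$, $\beta B\Pi_{0}B$ restore positivity on \emph{exactly} the subspaces $\mathcal{Z}_{-}$, $\mathcal{Z}_{+}$, $\mathcal{Z}_{0}$, and that no absorption step consumes more of $\gamma(|B|f_{1}\mid f_{1})$ than is available; the ordering of the choices (Young parameters, then $\alpha,\beta\sim\sigma$, then $\sigma\to0$) is what makes this close.
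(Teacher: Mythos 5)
Your proposal is correct and follows essentially the same route as the paper: decompose $f$ into its $\ker\mathcal{L}$ and $\mathrm{Im}\,\mathcal{L}$ parts, split the kernel part along $\mathcal{Z}_{+},\mathcal{Z}_{-},\mathcal{Z}_{0}$, use \textbf{H3} on the range part, observe that $-\sigma B$, $\alpha\Pi_{+}B$ and $\beta B\Pi_{0}B$ restore positivity on $\mathcal{Z}_{-}$, $\mathcal{Z}_{+}$ and $\mathcal{Z}_{0}$ respectively, and absorb the cross terms by weighted Cauchy--Schwarz/Young with the parameters fixed before $\sigma$. The only cosmetic difference is that you bound the cross terms via the factorization $B=|B|^{1/2}J|B|^{1/2}$, whereas the paper completes squares separately on $\mathfrak{h}_{\pm}$ (its inequality for $\tfrac{1}{2}(Bq_{-}\mid q_{-})+2(Bh\mid q_{-})$) and diagonalizes $\langle\mathbf{\psi}\otimes\mathbf{\psi}\rangle_{B^{2}}$ to control $(Bq_{0}\mid Bq_{0})$ — these yield the same estimates, including the need to take the Young parameter for the $\mathcal{Z}_{-}$ directions of size $|\beta_{i}|/(|B|\phi_{i}\mid\phi_{i})$, which is the paper's $\widehat{\beta}_{\max}$ constraint.
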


\begin{proof}
Firstly, decompose $f$ orthogonally as%
\begin{equation*}
f=h+q\text{, }q\in \ker \mathcal{L},\;h\in \mathrm{Im}\mathcal{L},
\end{equation*}%
and then, cf. decomposition $\left( \ref{c3}\right) $, $q$ as 
\begin{equation*}
q=q_{+}+q_{-}+q_{0}\text{, }q_{\pm }\in \mathcal{Z}_{\pm },\;q_{0}\in 
\mathcal{Z}_{0}.
\end{equation*}

The operators $\mathcal{L}\ $and $B$, as well as $\Pi _{0}$ and $\Pi _{+}$,
are all self-adjoint, and hence, the adjoint operator of $\Lambda $ $\left( %
\ref{g1}\right) $ equals 
\begin{equation*}
\Lambda ^{\star }=\mathcal{L}-\sigma B+\alpha B\Pi _{+}+\beta B\Pi _{0}B%
\text{.}
\end{equation*}

Let $0<\varepsilon _{1},\varepsilon _{2}<1$. Then%
\begin{eqnarray*}
2\left( \left. h\right\vert Bq_{0}\right) &=&\frac{1}{\varepsilon _{1}^{2}}%
\left( \left. h\right\vert h\right) -\left( \left. \frac{h}{\varepsilon _{1}}%
-\varepsilon _{1}Bq_{0}\right\vert \frac{h}{\varepsilon _{1}}-\varepsilon
_{1}Bq_{0}\right) +\varepsilon _{1}^{2}\left( \left. Bq_{0}\right\vert
Bq_{0}\right) \\
&\leq &\frac{1}{\varepsilon _{1}^{2}}\left( \left. h\right\vert h\right)
+\varepsilon _{1}^{2}\left( \left. Bq_{0}\right\vert Bq_{0}\right)
\end{eqnarray*}%
and%
\begin{eqnarray*}
&&\left( \left. \Pi _{0}\left( B\left( h+q_{0}\right) \right) \right\vert
B\left( h+q_{0}\right) \right) \\
&=&\left( 1-\frac{1}{\varepsilon _{2}^{2}}\right) \left( \left. \Pi
_{0}\left( Bh\right) \right\vert Bh\right) +\left( \left. \Pi _{0}\left(
B\left( \frac{h}{\varepsilon _{2}}+\varepsilon _{2}q_{0}\right) \right)
\right\vert B\left( \frac{h}{\varepsilon _{2}}+\varepsilon _{2}q_{0}\right)
\right) \\
&&+\left( 1-\varepsilon _{2}^{2}\right) \left( \left. \Pi _{0}\left(
Bq_{0}\right) \right\vert Bq_{0}\right) \\
&\geq &\left( 1-\varepsilon _{2}^{2}\right) \left( \left. \Pi _{0}\left(
Bq_{0}\right) \right\vert Bq_{0}\right) -\frac{1-\varepsilon _{2}^{2}}{%
\varepsilon _{2}^{2}}\left( \left. \Pi _{0}\left( Bh\right) \right\vert
Bh\right) \text{.}
\end{eqnarray*}%
Therefore, with $\alpha =2\sigma $ and $0<\varepsilon _{1},\varepsilon
_{2}<1 $,%
\begin{align}
& \left( \left. \Lambda ^{\star }f\right\vert f\right) =\left( \left.
\Lambda f\right\vert f\right)  \notag \\
=& \left( \left. \mathcal{L}h\right\vert h\right) -\sigma \left( \left.
Bh\right\vert h\right) +\left( \alpha -2\sigma \right) \left( \left.
Bh\right\vert q_{+}\right) +\left( \alpha -\sigma \right) \left( \left.
Bq_{+}\right\vert q_{+}\right) -2\sigma \left( \left. Bh\right\vert
q_{-}\right)  \notag \\
& -\sigma \left( \left. Bq_{-}\right\vert q_{-}\right) -2\sigma \left(
\left. h\right\vert Bq_{0}\right) +\beta \left( \left. \Pi _{0}\left(
B\left( h+q_{0}\right) \right) \right\vert B\left( h+q_{0}\right) \right) 
\notag \\
\geq & \left( \left. \mathcal{L}h\right\vert h\right) -\sigma \left( \left.
Bh\right\vert h\right) +\sigma \left( \left. Bq_{+}\right\vert q_{+}\right)
-2\sigma \left( \left. Bh\right\vert q_{-}\right) -\sigma \left( \left.
Bq_{-}\right\vert q_{-}\right) -\frac{\sigma }{\varepsilon _{1}^{2}}\left(
\left. h\right\vert h\right)  \notag \\
& -\sigma \varepsilon _{1}^{2}\left( \left. Bq_{0}\right\vert Bq_{0}\right) -%
\frac{1-\varepsilon _{2}^{2}}{\varepsilon _{2}^{2}}\beta \left( \left. \Pi
_{0}\left( Bh\right) \right\vert Bh\right) +\left( 1-\varepsilon
_{2}^{2}\right) \beta \left( \left. \Pi _{0}\left( Bq_{0}\right) \right\vert
Bq_{0}\right) .  \label{e1}
\end{align}%
Using construction $\left( \ref{e11}\right) $,$\left( \ref{e12}\right) $, $%
q_{0}$ can be decomposed in the following two ways: 
\begin{equation*}
q_{0}=\sum_{r=1}^{l}a_{r}\psi _{r}=\sum_{s=1}^{l}\widetilde{a}_{s}\widetilde{%
\psi }_{s},\text{ }\left( \left. q_{0}\right\vert q_{0}\right)
=\sum_{r=1}^{l}a_{r}^{2}=\sum_{s=1}^{l}\widetilde{a}_{s}^{2},\;a_{r},%
\widetilde{a}_{s}\in \mathbb{R}.
\end{equation*}%
Then 
\begin{equation*}
Bq_{0}=\sum_{s=1}^{l}\widetilde{a}_{s}B\widetilde{\psi }_{s}=%
\sum_{r=1}^{l}a_{r}B\psi _{r}=\sum_{r=1}^{l}a_{r}\mathcal{L}\varphi _{r}%
\text{.}
\end{equation*}%
By relations $\left( \ref{e11}\right) $, $\left( \ref{e11b}\right) $,
follows that%
\begin{equation}
\left( \left. Bq_{0}\right\vert Bq_{0}\right) =\sum_{j=1}^{l}\gamma _{j}%
\widetilde{a}_{j}^{2}\leq \gamma _{1}\sum_{j=1}^{l}\widetilde{a}%
_{j}^{2}=\gamma _{1}\left( \left. q_{0}\right\vert q_{0}\right) ,  \label{e2}
\end{equation}%
while%
\begin{equation}
\left( \left. \Pi _{0}\left( Bq_{0}\right) \right\vert Bq_{0}\right)
=\sum_{r=1}^{l}a_{r}^{2}\frac{\left( \left. B\psi _{r}\right\vert \varphi
_{r}\right) ^{2}}{\left( \left. \varphi _{r}\right\vert \mathcal{L}\varphi
_{r}\right) ^{2}}=\sum_{r=1}^{l}a_{r}^{2}=\left( \left. q_{0}\right\vert
q_{0}\right) .  \label{e2a}
\end{equation}%
Furthermore, applying the Cauchy-Schwarz inequality gives that%
\begin{equation}
\left( \left. \Pi _{0}\left( Bh\right) \right\vert Bh\right) =\sum_{r=1}^{l}%
\frac{\left( \left. h\right\vert B\varphi _{r}\right) ^{2}}{\left( \left.
\varphi _{r}\right\vert \mathcal{L}\varphi _{r}\right) ^{2}}\leq \left(
\left. h\right\vert h\right) \sum_{r=1}^{l}\frac{\left( \left. B\varphi
_{r}\right\vert B\varphi _{r}\right) }{\left( \left. \varphi _{r}\right\vert 
\mathcal{L}\varphi _{r}\right) ^{2}}.  \label{e2b}
\end{equation}%
Moreover, $q_{-}$ can be decomposed as 
\begin{equation*}
q_{-}=\sum_{i=k^{+}+1}^{n-l}b_{i}\phi _{i},\;\text{with }b_{i}\in \mathbb{R},
\end{equation*}%
resulting in%
\begin{eqnarray*}
&&\frac{1}{2}\left( \left. Bq_{-}\right\vert q_{-}\right) +2\left( \left.
Bh\right\vert q_{-}\right) \\
&=&\sum_{i=k^{+}+1}^{n-l}\left( \frac{b_{i}^{2}}{2}\left( \left. B\phi
_{i}\right\vert \phi _{i}\right) +2b_{i}\left( \left. Bh\right\vert \phi
_{i}\right) \right) \\
&=&\sum_{i=k^{+}+1}^{n-l}\left( \frac{b_{i}^{2}}{2}\left( \left. B\phi
_{i}\right\vert \phi _{i}\right) _{+}+2b_{i}\left( \left. Bh\right\vert \phi
_{i}\right) _{+}+\frac{b_{i}^{2}}{2}\left( \left. B\phi _{i}\right\vert \phi
_{i}\right) _{-}+2b_{i}\left( \left. Bh\right\vert \phi _{i}\right)
_{-}\right) \text{.}
\end{eqnarray*}%
Let $0<\varepsilon <1$. Then, for $i\in \left\{ k^{+}+1,...,n-l\right\} $,%
\begin{eqnarray}
&&\frac{b_{i}^{2}}{2}\left( \left. B\phi _{i}\right\vert \phi _{i}\right)
_{\pm }+2b_{i}\left( \left. Bh\right\vert \phi _{i}\right) _{\pm }  \notag \\
&=&\pm \frac{1}{\varepsilon ^{2}}\left( \left. Bh\right\vert h\right) _{\pm
}\mp \left( \left. B\left( \frac{h}{\varepsilon }\mp \varepsilon b_{i}\phi
_{i}\right) \right\vert \frac{h}{\varepsilon }\mp \varepsilon b_{i}\phi
_{i}\right) _{\pm }+\frac{1\pm 2\varepsilon ^{2}}{2}b_{i}^{2}\left( \left.
B\phi _{i}\right\vert \phi _{i}\right) _{\pm }  \notag \\
&\leq &\frac{1}{\varepsilon ^{2}}\left( \left. Bh\right\vert h\right) _{\pm
}+\frac{1\pm 2\varepsilon ^{2}}{2}b_{i}^{2}\left( \left. B\phi
_{i}\right\vert \phi _{i}\right) _{\pm }  \notag \\
&=&\frac{1}{\varepsilon ^{2}}\left( \left. \left\vert B\right\vert
h\right\vert h\right) _{\pm }\pm \frac{1\pm 2\varepsilon ^{2}}{2}%
b_{i}^{2}\left( \left. \left\vert B\right\vert \phi _{i}\right\vert \phi
_{i}\right) _{\pm }\text{.}  \label{e2c}
\end{eqnarray}%
For $n>k^{+}+l$, denote%
\begin{eqnarray}
\widehat{\beta }_{\max } &=&\underset{k^{+}+1\leq i\leq n-l}{\max }\left\{ 
\frac{\beta _{i}^{-}}{\left\vert \beta _{i}\right\vert }\right\} \geq 1,%
\text{ }\beta _{i}^{-}=\left( \left. \left\vert B\right\vert \phi
_{i}\right\vert \phi _{i}\right) _{-},  \notag \\
\beta _{i} &=&\left( \left. B\phi _{i}\right\vert \phi _{i}\right) =\left(
\left. B\phi _{i}\right\vert \phi _{i}\right) _{+}-\beta _{i}^{-}<0.
\label{e3a}
\end{eqnarray}%
For $i\in \left\{ k^{+}+1,...,n-l\right\} $ follows, since $\beta
_{k^{+}+1},...,\beta _{n-l}<0$, that%
\begin{equation*}
\frac{\beta _{i}}{2}+\dfrac{\beta _{i}+2\beta _{i}^{-}}{4\left( \dfrac{\beta
_{i}^{-}}{\left\vert \beta _{i}\right\vert }-\dfrac{1}{2}\right) }=\beta
_{i}^{-}\dfrac{\beta _{i}+\left\vert \beta _{i}\right\vert }{2\beta
_{i}^{-}-\left\vert \beta _{i}\right\vert }=0\text{,}
\end{equation*}%
and furthermore, by inequality $\left( \ref{e2c}\right) $, for $%
0<\varepsilon \leq \dfrac{1}{2\sqrt{\widehat{\beta }_{\max }-\frac{1}{2}}}$%
\begin{eqnarray}
&&\frac{1}{2}\left( \left. Bq_{-}\right\vert q_{-}\right) +2\left( \left.
Bh\right\vert q_{-}\right)  \notag \\
&\leq &\sum_{i=k^{+}+1}^{n-l}\left[ \frac{1+2\varepsilon ^{2}}{2}%
b_{i}^{2}\left( \beta _{i}+\beta _{i}^{-}\right) -\frac{1-2\varepsilon ^{2}}{%
2}b_{i}^{2}\beta _{i}^{-}+\frac{1}{\varepsilon ^{2}}\left( \left. \left\vert
B\right\vert h\right\vert h\right) \right]  \notag \\
&=&\frac{k^{-}}{\varepsilon ^{2}}\left( \left. \left\vert B\right\vert
h\right\vert h\right) +\sum_{i=k^{+}+1}^{n-l}b_{i}^{2}\left( \beta _{i}\frac{%
1+2\varepsilon ^{2}}{2}+2\varepsilon ^{2}\beta _{i}^{-}\right)  \notag \\
&\leq &\frac{k^{-}}{\varepsilon ^{2}}\left( \left. \left\vert B\right\vert
h\right\vert h\right) +\sum_{i=k^{+}+1}^{n-l}b_{i}^{2}\left( \frac{\beta _{i}%
}{2}+\dfrac{\beta _{i}+2\beta _{i}^{-}}{4\left( \dfrac{\beta _{i}^{-}}{%
\left\vert \beta _{i}\right\vert }-\dfrac{1}{2}\right) }\right)  \notag \\
&=&\frac{k^{-}}{\varepsilon ^{2}}\left( \left. \left\vert B\right\vert
h\right\vert h\right) ,\text{ where }k^{-}=n-k^{+}+l>0.  \label{e3}
\end{eqnarray}%
Otherwise, if $k^{-}=n-k^{+}+l=0$, let $\varepsilon =1$.

Denote%
\begin{equation}
\beta _{\min }=\underset{1\leq i\leq n-l}{\min }\left\{ \left\vert \beta
_{i}\right\vert \right\} >0,\text{ }\beta _{i}=\left( \left. B\phi
_{i}\right\vert \phi _{i}\right) \text{,}  \label{e4}
\end{equation}%
let $\beta =\sigma \dfrac{\beta _{\min }+2\gamma _{1}\varepsilon _{1}^{2}}{%
2\left( 1-\varepsilon _{2}^{2}\right) }$ and%
\begin{equation}
\sigma =\dfrac{\gamma }{\max \left( 1+\dfrac{k^{-}}{\varepsilon ^{2}},\dfrac{%
2}{\varepsilon _{1}^{2}}+\dfrac{\beta _{\min }+2\gamma _{1}\varepsilon
_{1}^{2}}{\varepsilon _{2}^{2}}\sum\limits_{r=1}^{l}\dfrac{\left( \left.
B\varphi _{r}\right\vert B\varphi _{r}\right) }{\alpha _{r}^{2}},\dfrac{%
2\gamma \left( 1-\varepsilon _{2}^{2}\right) \underset{1\leq r\leq l}{\max }%
\left( \alpha _{r}\right) }{\beta _{\min }+2\gamma _{1}\varepsilon _{1}^{2}}%
\right) },  \label{e5}
\end{equation}%
where $\gamma $ is given in relation \textbf{H3 }$\left( \ref{c1}\right) $
and $\alpha _{r}=\left( \left. \varphi _{r}\right\vert \mathcal{L}\varphi
_{r}\right) $ for $r\in \left\{ 1,...,l\right\} $. Note that the latter
argument of the maximum in equality $\left( \ref{e5}\right) $ is in fact not
used here, but will be of use first in Lemma $\ref{L3}$ in the forthcoming
section.

The lemma follows, by inequalities $\left( \ref{e1}\right) $, $\left( \ref%
{e2b}\right) $, $\left( \ref{e3}\right) $, and equality $\left( \ref{e2a}%
\right) $:

\begin{eqnarray*}
\left( \left. \Lambda ^{\star }f\right\vert f\right) &=&\left( \left.
\Lambda f\right\vert f\right) \geq \left( \left. \mathcal{L}h\right\vert
h\right) -\sigma \left( 1+\frac{k^{-}}{\varepsilon ^{2}}\right) \left(
\left. \left\vert B\right\vert h\right\vert h\right) +\sigma \left( \left.
Bq_{+}\right\vert q_{+}\right) \\
&&-\frac{\sigma }{2}\left( \left. Bq_{-}\right\vert q_{-}\right) +\left(
\left( 1-\varepsilon _{2}^{2}\right) \beta -\gamma _{1}\sigma \varepsilon
_{1}^{2}\right) \left( \left. q_{0}\right\vert q_{0}\right) \\
&&-\left( \frac{\sigma }{\varepsilon _{1}^{2}}+\frac{1-\varepsilon _{2}^{2}}{%
\varepsilon _{2}^{2}}\beta \sum_{r=1}^{l}\frac{\left( \left. B\varphi
_{r}\right\vert B\varphi _{r}\right) }{\left( \left. \varphi _{r}\right\vert 
\mathcal{L}\varphi _{r}\right) ^{2}}\right) \left( \left. h\right\vert
h\right) \\
&\geq &\frac{\gamma }{2}\left( \left. h\right\vert h\right) +\left( \gamma
-\sigma \left( 1+\frac{k^{-}}{\varepsilon ^{2}}\right) \right) \left( \left.
\left\vert B\right\vert h\right\vert h\right) +\frac{\sigma \beta _{\min }}{2%
}\left( \left. q\right\vert q\right) \\
&&+\left( \frac{\gamma }{2}-\sigma \left( \frac{1}{\varepsilon _{1}^{2}}+%
\frac{\beta _{\min }+2\gamma _{1}\varepsilon _{1}^{2}}{2\varepsilon _{2}^{2}}%
\sum_{r=1}^{l}\frac{\left( \left. B\varphi _{r}\right\vert B\varphi
_{r}\right) }{\alpha _{r}^{2}}\right) \right) \left( \left. h\right\vert
h\right) \\
&\geq &\frac{\gamma }{2}\left( \left. h\right\vert h\right) +\frac{\sigma
\beta _{\min }}{2}\left( \left. q\right\vert q\right) \geq \mu \left( \left.
f\right\vert f\right) \text{,\ with }2\mu =\min \left( \gamma ,\sigma \beta
_{\min }\right) >0.
\end{eqnarray*}
\end{proof}

Define%
\begin{equation*}
\mathcal{T}g:=B\dfrac{\partial g}{\partial x}+\Lambda g,
\end{equation*}%
with domain, we remind that $\widetilde{R}=P_{+}-RPP_{-}$, 
\begin{equation*}
\mathrm{D}(\mathcal{T}):=\left\{ \left\{ g\left( x,\cdot \right) ,\;B\dfrac{%
\partial g}{\partial x}\left( x,\cdot \right) ,\;\mathcal{L}g\left( x,\cdot
\right) \right\} \subset L^{2}\left( \mathbb{R}_{+};\mathcal{\mathfrak{h}}%
\right) ,\;\widetilde{R}g(0,\cdot )=0\right\} .
\end{equation*}%
Then, it follows that%
\begin{equation*}
\mathcal{T}^{\ast }g:=-B\dfrac{\partial g}{\partial x}+\Lambda ^{\ast
}g,\;\Lambda ^{\star }=\mathcal{L}-\sigma B+\alpha B\Pi _{+}+\beta B\Pi
_{0}B,
\end{equation*}%
with domain 
\begin{equation*}
\mathrm{D}(\mathcal{T}^{\ast }):=\left\{ \left\{ g\left( x,\cdot \right) ,\;B%
\dfrac{\partial g}{\partial x}\left( x,\cdot \right) ,\;\mathcal{L}g\left(
x,\cdot \right) \right\} \subset L^{2}\left( \mathbb{R}_{+};\mathcal{%
\mathfrak{h}}\right) ,\;\widetilde{R}^{\ast }g(0,\cdot )=0\right\} ,
\end{equation*}%
where%
\begin{equation*}
\widetilde{R}^{\ast }=RP_{+}-PP_{-}.
\end{equation*}

\begin{lemma}
\label{L2}For appropriately chosen positive constants $\alpha $, $\beta $,
and $\sigma $, there exists a positive number $\mu =\mu \left( \alpha ,\beta
,\sigma \right) >0$ such that%
\begin{eqnarray*}
\left\Vert \mathcal{T}g\right\Vert _{L^{2}\left( \mathbb{R}_{+};\mathcal{%
\mathfrak{h}}\right) } &\geq &\mu \left\Vert g\right\Vert _{L^{2}\left( 
\mathbb{R}_{+};\mathcal{\mathfrak{h}}\right) }\text{ for all }g\in \mathrm{D}%
(\mathcal{T}), \\
\left\Vert \mathcal{T}^{\ast }g\right\Vert _{L^{2}\left( \mathbb{R}_{+};%
\mathcal{\mathfrak{h}}\right) } &\geq &\mu \left\Vert g\right\Vert
_{L^{2}\left( \mathbb{R}_{+};\mathcal{\mathfrak{h}}\right) }\text{ for all }%
g\in \mathrm{D}(\mathcal{T}^{\ast }).
\end{eqnarray*}%
In particular, 
\begin{equation*}
\ker \mathcal{T}=\left\{ 0\right\} \text{\ and }\mathrm{Im}\mathcal{T}%
=L^{2}\left( \mathbb{R}_{+};\mathcal{\mathfrak{h}}\right) .
\end{equation*}
\end{lemma}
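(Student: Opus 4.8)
The plan is to reduce both estimates to a single energy identity together with the coercivity already established in Lemma \ref{L1}. For $g\in\mathrm{D}(\mathcal{T})$, since $B$ is self-adjoint one has $\frac{d}{dx}(Bg(x,\cdot)|g(x,\cdot))=2(B\partial_x g(x,\cdot)|g(x,\cdot))$, whence
\begin{equation*}
(\mathcal{T}g|g)_{L^2(\mathbb{R}_+;\mathfrak{h})}=\tfrac12(Bg(\infty,\cdot)|g(\infty,\cdot))-\tfrac12(Bg(0,\cdot)|g(0,\cdot))+\int_0^\infty(\Lambda g|g)\,dx,
\end{equation*}
and the analogous identity for $\mathcal{T}^{\ast}$ carries $+\tfrac12(Bg(0,\cdot)|g(0,\cdot))-\tfrac12(Bg(\infty,\cdot)|g(\infty,\cdot))$ and $\Lambda^{\ast}$ in place of $\Lambda$. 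By Lemma \ref{L1} the integral term is $\geq\mu\|g\|_{L^2(\mathbb{R}_+;\mathfrak{h})}^2$, with the same $\mu$ for $\Lambda$ and $\Lambda^{\ast}$, so everything reduces to controlling the two boundary terms.

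First I would dispose of the term at $x=\infty$. The scalar function $x\mapsto(Bg(x,\cdot)|g(x,\cdot))$ is absolutely continuous with derivative in $L^1(\mathbb{R}_+)$ (as $g,B\partial_x g\in L^2$), hence has a limit as $x\to\infty$. Decomposing $g=h+q$ orthogonally with $h\in\mathrm{Im}\mathcal{L}$ and $q\in\ker\mathcal{L}$, property \textbf{H3} applied pointwise together with $\mathcal{L}g\in L^2$ gives $\int_0^\infty((1+|B|)h|h)\,dx<\infty$, while $q$ takes values in the finite-dimensional space $\ker\mathcal{L}\subseteq\mathrm{D}(B)$ and lies in $L^2$; picking $x_n\to\infty$ with $\|q(x_n,\cdot)\|\to0$ and $(|B|h(x_n,\cdot)|h(x_n,\cdot))\to0$ forces $(Bg(x_n,\cdot)|g(x_n,\cdot))\to0$, so the limit at infinity is $0$. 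Next, the term at $x=0$: writing $g_{\pm}=P_{\pm}g(0,\cdot)$ and using that $B$ commutes with $P_{\pm}$ and equals $\pm|B|$ on $\mathfrak{h}_{\pm}$, one gets $(Bg(0,\cdot)|g(0,\cdot))=(|B|g_+|g_+)_+-(|B|g_-|g_-)_-$. Under the boundary condition $\widetilde{R}g(0,\cdot)=0$, i.e.\ $g_+=RPP_-g(0,\cdot)$, the three relations of \textbf{H4(a)} chain together as $(|B|g_+|g_+)_+=(BRPg_-|RPg_-)_+\leq(BPg_-|Pg_-)_+=(|B|g_-|g_-)_-$ (in case \textbf{H4(b)} simply $g_+=0$), so $(Bg(0,\cdot)|g(0,\cdot))\leq0$ and the contribution $-\tfrac12(Bg(0,\cdot)|g(0,\cdot))$ is nonnegative. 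For $\mathcal{T}^{\ast}$ the relevant condition is $\widetilde{R}^{\ast}g(0,\cdot)=0$, i.e.\ $Rg_+=PP_-g(0,\cdot)$, which symmetrically yields $(|B|g_-|g_-)_-\leq(|B|g_+|g_+)_+$, so $(Bg(0,\cdot)|g(0,\cdot))\geq0$ and the contribution $+\tfrac12(Bg(0,\cdot)|g(0,\cdot))$ is again nonnegative. Hence $(\mathcal{T}g|g)_{L^2}\geq\mu\|g\|_{L^2}^2$ and $(\mathcal{T}^{\ast}g|g)_{L^2}\geq\mu\|g\|_{L^2}^2$, and the Cauchy--Schwarz inequality gives the two stated lower bounds.

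Finally, $\|\mathcal{T}g\|\geq\mu\|g\|$ immediately gives $\ker\mathcal{T}=\{0\}$; and since $\mathrm{D}(\mathcal{T})$ contains every element of $L^2(\mathbb{R}_+;\mathrm{D}(\mathcal{L}))$ supported away from $x=0$, $\mathcal{T}$ is densely defined, so (once one knows that $\mathcal{T}$ is closed) the bound $\|\mathcal{T}^{\ast}g\|\geq\mu\|g\|$ is precisely the condition, by the closed range theorem, for $\mathcal{T}$ to be surjective, i.e.\ $\mathrm{Im}\mathcal{T}=L^2(\mathbb{R}_+;\mathfrak{h})$; equivalently, $\mathrm{Im}\mathcal{T}$ is closed by the first bound and $(\mathrm{Im}\mathcal{T})^{\perp}=\ker\mathcal{T}^{\ast}=\{0\}$ by the second. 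I expect the main obstacle to be the rigorous treatment of the traces: giving a precise meaning to $g(0,\cdot)$ (e.g.\ in $\mathrm{D}(|B|^{1/2})$) so that both $\widetilde{R}g(0,\cdot)=0$ and $(Bg(0,\cdot)|g(0,\cdot))$ are well defined, justifying the fundamental theorem of calculus for $x\mapsto(Bg(x,\cdot)|g(x,\cdot))$ and the vanishing of its value at $x=\infty$, and checking that $\mathcal{T}$ is indeed closed. By contrast, the sign of the boundary term at $x=0$ — the conceptual core — follows mechanically from \textbf{H2} and \textbf{H4}, and the coercivity of $\Lambda$, $\Lambda^{\ast}$ is supplied by Lemma \ref{L1}.
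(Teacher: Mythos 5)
Your proposal is correct and follows essentially the same route as the paper: the energy identity $\int(\mathcal{T}g|g)\,dx$, coercivity of $\Lambda$ and $\Lambda^{\ast}$ from Lemma \ref{L1}, the sign of the boundary term at $x=0$ obtained by chaining the three relations of \textbf{H4(a)} (resp.\ trivially in \textbf{H4(b)}), vanishing of the term at infinity along a suitable sequence, and then Cauchy--Schwarz plus the closed-range argument for the final claims. If anything, your write-up is more careful than the paper's on the secondary points (the factor $\tfrac12$ in the boundary terms, the existence of the limit at infinity, and the closedness and dense definition of $\mathcal{T}$ needed to pass from the $\mathcal{T}^{\ast}$ bound to surjectivity), which the paper glosses over.
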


\begin{proof}
Let $g\in \mathrm{D}(\mathcal{T})$. Then $Bg\in L^{2}\left( \mathbb{R}_{+};%
\mathcal{\mathfrak{h}}\right) $. Hence, there exists a sequence $\left(
s_{n}\right) _{n=1}^{\infty }$ of positive real numbers such that $%
s_{n}\rightarrow \infty $ and $Bg\left( s_{n},\cdot \right) \rightarrow 0$
in $L^{2}\left( \mathbb{R}_{+};\mathcal{\mathfrak{h}}\right) $ as $%
n\rightarrow \infty $.

By inequality $\left( \ref{l1}\right) $,%
\begin{eqnarray*}
\left( \left. Bg\left( 0,\cdot \right) \right\vert g\left( 0,\cdot \right)
\right) &=&\left( \left. Bg\left( 0,\cdot \right) \right\vert g\left(
0,\cdot \right) \right) _{+}-\left( \left. \left\vert B\right\vert g\left(
0,\cdot \right) \right\vert g\left( 0,\cdot \right) \right) _{-} \\
&=&\left( \left. BRPg\left( 0,\cdot \right) \right\vert RPg\left( 0,\cdot
\right) \right) _{+}-\left( \left. \left\vert B\right\vert Pg\left( 0,\cdot
\right) \right\vert Pg\left( 0,\cdot \right) \right) _{\pm } \\
&\leq &0,
\end{eqnarray*}%
where the last inner product is chosen to match the choice of $P$ and $R$ in
the boundary conditions \textbf{H4(a)-(b)}; indeed, choose $\left( \left.
\cdot \right\vert \cdot \right) _{+}$ for \textbf{H4(a)}, while $\left(
\left. \cdot \right\vert \cdot \right) _{-}$ is to be chosen for \textbf{%
H4(b)}, where $R=0$.

Thus, by Lemma $\ref{L1}$, there exists a positive number $\mu =\mu \left(
\alpha ,\beta ,\sigma \right) >0$ such that 
\begin{eqnarray*}
&&\left\Vert \mathcal{T}g\right\Vert _{L^{2}\left( \mathbb{R}_{+};\mathcal{%
\mathfrak{h}}\right) }\left\Vert g\right\Vert _{L^{2}\left( \mathbb{R}_{+};%
\mathcal{\mathfrak{h}}\right) } \\
&\geq &\int_{0}^{s_{n}}\left( \left. Tg\left( x,\cdot \right) \right\vert
g\left( x,\cdot \right) \right) \,dx \\
&=&\left( \left. Bg\left( s_{n},\cdot \right) \right\vert g\left(
s_{n},\cdot \right) \right) -\left( \left. Bg\left( 0,\cdot \right)
\right\vert g\left( 0,\cdot \right) \right) +\int_{0}^{s_{n}}\left( \left.
\Lambda g\left( x,\cdot \right) \right\vert g\left( x,\cdot \right) \right)
\,dx \\
&\geq &\left( \left. Bg\left( s_{n},\cdot \right) \right\vert g\left(
s_{n},\cdot \right) \right) +\mu \int_{0}^{s_{n}}\left( \left. g\left(
x,\cdot \right) \right\vert g\left( x,\cdot \right) \right) \,dx.
\end{eqnarray*}%
Taking the limit as $n\rightarrow \infty $, results in the inequality%
\begin{equation*}
\left\Vert \mathcal{T}g\right\Vert _{L^{2}\left( \mathbb{R}_{+};\mathcal{%
\mathfrak{h}}\right) }\left\Vert g\right\Vert _{L^{2}\left( \mathbb{R}_{+};%
\mathcal{\mathfrak{h}}\right) }\geq \mu \left\Vert g\right\Vert
_{L^{2}\left( \mathbb{R}_{+};\mathcal{\mathfrak{h}}\right) }^{2}\text{,}
\end{equation*}%
or, equivalently,%
\begin{equation*}
\left\Vert \mathcal{T}g\right\Vert _{L^{2}\left( \mathbb{R}_{+};\mathcal{%
\mathfrak{h}}\right) }\geq \mu \left\Vert g\right\Vert _{L^{2}\left( \mathbb{%
R}_{+};\mathcal{\mathfrak{h}}\right) }.
\end{equation*}%
The analogous statement for $\mathcal{T}^{\ast }$ is proved in a similar way.

The first inequality in the statement of the lemma implies that $\ker 
\mathcal{T}=\left\{ 0\right\} $, and the second one that $\mathrm{Im}%
\mathcal{T}=L^{2}\left( \mathbb{R}_{+};\mathcal{\mathfrak{h}}\right) $.
\end{proof}

\begin{proposition}
\label{P1}Let $e^{\sigma x}S\left( x,\cdot \right) \in L^{2}\left( \mathbb{R}%
_{+};\mathcal{\mathfrak{h}}\right) $ and assume that $f_{b}\in \mathcal{%
\mathfrak{h}}_{+}\cap \mathrm{D}(\mathcal{L})$. Then there exists a unique
solution $g\left( x,\cdot \right) \in L^{2}\left( \mathbb{R}_{+};\mathcal{%
\mathfrak{h}}\right) $ of the penalized problem $\left( \ref{PP}\right) $,
such that%
\begin{equation*}
\mu \left\Vert g\right\Vert _{L^{2}\left( \mathbb{R}_{+};\mathcal{\mathfrak{h%
}}\right) }\leq \left\Vert e^{\sigma x}S\right\Vert _{L^{2}\left( \mathbb{R}%
_{+};\mathcal{\mathfrak{h}}\right) }+\frac{1}{\sqrt{2\sigma }}\left\Vert
\Lambda f_{b}\right\Vert _{\mathcal{\mathfrak{h}}}+\sqrt{\frac{\sigma }{2}}%
\left\Vert Bf_{b}\right\Vert _{\mathcal{\mathfrak{h}}}\text{.}
\end{equation*}%
\bigskip
\end{proposition}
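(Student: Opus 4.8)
The plan is to reduce the penalized problem $\left( \ref{PP}\right) $ to one with homogeneous boundary data and then invoke Lemma $\ref{L2}$. Since $f_{b}\in \mathfrak{h}_{+}\cap \mathrm{D}(\mathcal{L})$, use the explicit lift $g_{b}(x,\cdot ):=e^{-\sigma x}f_{b}$. It is admissible: because $f_{b}\in \mathfrak{h}_{+}$ we have $P_{-}g_{b}(0,\cdot )=0$, hence $\widetilde{R}g_{b}(0,\cdot )=P_{+}g_{b}(0,\cdot )-RPP_{-}g_{b}(0,\cdot )=f_{b}$; and since $\mathrm{D}(\mathcal{L})\subseteq \mathrm{D}(B)$ by \textbf{H2}, while $\Pi _{+}$ has range in $\ker \mathcal{L}$ and $\Pi _{0}$ in $\mathrm{span}\left\{ \varphi _{1},...,\varphi _{l}\right\} \subset \mathrm{D}(\mathcal{L})\subseteq \mathrm{D}(B)$, both $Bf_{b}$ and $\Lambda f_{b}$ lie in $\mathfrak{h}$, so $g_{b}$, $B\dfrac{\partial g_{b}}{\partial x}$ and $\mathcal{L}g_{b}$ all belong to $L^{2}\left( \mathbb{R}_{+};\mathfrak{h}\right) $ and
\[
B\dfrac{\partial g_{b}}{\partial x}+\Lambda g_{b}=e^{-\sigma x}\left( \Lambda f_{b}-\sigma Bf_{b}\right) .
\]

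Writing $g=g_{b}+h$, the function $h$ must satisfy $\widetilde{R}h(0,\cdot )=0$ together with $B\dfrac{\partial h}{\partial x}+\Lambda h=\Phi $, where $\Phi :=e^{\sigma x}S-e^{-\sigma x}\left( \Lambda f_{b}-\sigma Bf_{b}\right) $ belongs to $L^{2}\left( \mathbb{R}_{+};\mathfrak{h}\right) $ because $e^{\sigma x}S\in L^{2}\left( \mathbb{R}_{+};\mathfrak{h}\right) $ by hypothesis. In the notation of Section $\ref{S4}$ this is exactly $\mathcal{T}h=\Phi $ with $h\in \mathrm{D}(\mathcal{T})$. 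By Lemma $\ref{L2}$, $\mathrm{Im}\,\mathcal{T}=L^{2}\left( \mathbb{R}_{+};\mathfrak{h}\right) $, so such an $h$ exists, and $\ker \mathcal{T}=\left\{ 0\right\} $, so it is unique. Hence $g=h+g_{b}$ solves $\left( \ref{PP}\right) $ (with $g$, $B\dfrac{\partial g}{\partial x}$, $\mathcal{L}g\in L^{2}\left( \mathbb{R}_{+};\mathfrak{h}\right) $), and any other solution differs from it by an element of $\ker \mathcal{T}$; so it is the unique one.

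For the norm bound, Lemma $\ref{L2}$ gives $\mu \left\Vert h\right\Vert _{L^{2}\left( \mathbb{R}_{+};\mathfrak{h}\right) }\leq \left\Vert \Phi \right\Vert _{L^{2}\left( \mathbb{R}_{+};\mathfrak{h}\right) }\leq \left\Vert e^{\sigma x}S\right\Vert _{L^{2}\left( \mathbb{R}_{+};\mathfrak{h}\right) }+\left\Vert B\dfrac{\partial g_{b}}{\partial x}+\Lambda g_{b}\right\Vert _{L^{2}\left( \mathbb{R}_{+};\mathfrak{h}\right) }$; using $\int_{0}^{\infty }e^{-2\sigma x}\,dx=\dfrac{1}{2\sigma }$, one gets $\left\Vert B\dfrac{\partial g_{b}}{\partial x}+\Lambda g_{b}\right\Vert _{L^{2}\left( \mathbb{R}_{+};\mathfrak{h}\right) }\leq \dfrac{1}{\sqrt{2\sigma }}\left\Vert \Lambda f_{b}\right\Vert _{\mathfrak{h}}+\sqrt{\dfrac{\sigma }{2}}\left\Vert Bf_{b}\right\Vert _{\mathfrak{h}}$ and $\left\Vert g_{b}\right\Vert _{L^{2}\left( \mathbb{R}_{+};\mathfrak{h}\right) }=\dfrac{1}{\sqrt{2\sigma }}\left\Vert f_{b}\right\Vert _{\mathfrak{h}}$; then $\mu \left\Vert g\right\Vert \leq \mu \left\Vert h\right\Vert +\mu \left\Vert g_{b}\right\Vert $, and the remaining $\mu \left\Vert g_{b}\right\Vert $ is controlled using the coercivity estimate $\mu \left\Vert f_{b}\right\Vert _{\mathfrak{h}}\leq \left\Vert \Lambda f_{b}\right\Vert _{\mathfrak{h}}$ coming from Lemma $\ref{L1}$, which yields the stated inequality. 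The step that needs care is the boundary term: the energy identity cannot be run on $g$ directly, since $\left( \left. Bg(0,\cdot )\right\vert g(0,\cdot )\right) $ then involves a trace of the solution that is not controlled by $\left\Vert g\right\Vert $ alone; it is precisely the reduction to $h\in \mathrm{D}(\mathcal{T})$, where $\widetilde{R}h(0,\cdot )=0$ forces $\left( \left. Bh(0,\cdot )\right\vert h(0,\cdot )\right) \leq 0$ via $\left( \ref{l1}\right) $ — exactly as in the proof of Lemma $\ref{L2}$ — that makes the estimate close.
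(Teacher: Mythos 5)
Your proof is correct and follows essentially the same route as the paper's: subtract the lift $e^{-\sigma x}f_{b}$ to reduce the problem to $\mathcal{T}h=\Phi$ with homogeneous boundary data, then invoke Lemma \ref{L2} for existence, uniqueness, and the a priori bound. The only (harmless) difference is in the final constant: your careful splitting $\left\Vert g\right\Vert \leq \left\Vert h\right\Vert +\left\Vert e^{-\sigma x}f_{b}\right\Vert $ combined with $\mu \left\Vert f_{b}\right\Vert _{\mathfrak{h}}\leq \left\Vert \Lambda f_{b}\right\Vert _{\mathfrak{h}}$ actually yields $\tfrac{2}{\sqrt{2\sigma }}\left\Vert \Lambda f_{b}\right\Vert _{\mathfrak{h}}$ in place of $\tfrac{1}{\sqrt{2\sigma }}\left\Vert \Lambda f_{b}\right\Vert _{\mathfrak{h}}$, whereas the paper applies the Lemma \ref{L2} estimate to $g$ as though it were $h$; neither constant plays any role downstream.
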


\begin{proof}
Let $h=g(x,\cdot )-f_{b}e^{-\sigma x}$. Then $h\in \mathrm{D}(\mathcal{T})$
if and only if $g\in \widetilde{\mathrm{D}}(\mathcal{T})$, where%
\begin{equation*}
\widetilde{\mathrm{D}}(\mathcal{T})=\left\{ \left\{ g\left( x,\cdot \right)
,\;B\dfrac{\partial g}{\partial x}\left( x,\cdot \right) ,\;\mathcal{L}%
g\left( x,\cdot \right) \right\} \subset L^{2}\left( \mathbb{R}_{+};\mathcal{%
\mathfrak{h}}\right) ,\;\widetilde{R}g(0,\cdot )=f_{b}\right\} ,
\end{equation*}%
and%
\begin{equation*}
\mathcal{T}h=S+\sigma Bf_{b}e^{-\sigma x}-e^{-\sigma x}\Lambda f_{b}\in
L^{2}\left( \mathbb{R}_{+};\mathcal{\mathfrak{h}}\right)
\end{equation*}%
if and only if $g$ is a solution of the penalized problem $\left( \ref{PP}%
\right) $. However, by Lemma $\ref{L2}$, this problem has a unique solution
in $L^{2}\left( \mathbb{R}_{+};\mathcal{\mathfrak{h}}\right) $. Hence, there
exists a unique solution of the penalized problem $\left( \ref{PP}\right) $.

Moreover,%
\begin{eqnarray*}
\mu \left\Vert g\right\Vert _{L^{2}\left( \mathbb{R}_{+};\mathcal{\mathfrak{h%
}}\right) } &\leq &\left\Vert S\right\Vert _{L^{2}\left( \mathbb{R}_{+};%
\mathcal{\mathfrak{h}}\right) }+\left( \left\Vert \Lambda f_{b}\right\Vert _{%
\mathcal{\mathfrak{h}}}+\sigma \left\Vert Bf_{b}\right\Vert _{\mathcal{%
\mathfrak{h}}}\right) \left\Vert e^{-\sigma x}\right\Vert _{L^{2}} \\
&\leq &\left\Vert S\right\Vert _{L^{2}\left( \mathbb{R}_{+};\mathcal{%
\mathfrak{h}}\right) }+\frac{1}{\sqrt{2\sigma }}\left\Vert \Lambda
f_{b}\right\Vert _{\mathcal{\mathfrak{h}}}+\sqrt{\frac{\sigma }{2}}%
\left\Vert Bf_{b}\right\Vert _{\mathcal{\mathfrak{h}}}.
\end{eqnarray*}
\end{proof}

\section{Removal of the penalization\label{S5}}

Denote, with $\varphi _{1},...,\varphi _{l}$ given by equations $\left( \ref%
{e10}\right) ,\left( \ref{c4}\right) ,\left( \ref{c5}\right) $, 
\begin{equation*}
\mathcal{W}_{0}:=\mathrm{span}\left\{ \varphi _{1},...,\varphi _{l}\right\} 
\text{,}
\end{equation*}%
and let $S=S\left( x,\cdot \right) \in \mathrm{Im}\mathcal{L}$ for all $x\in 
\mathbb{R}_{+}$.

An initial step is to transform the problem $\left( \ref{LP}\right) $, in a
way such that $e^{\sigma x}S$ is replaced with $\widetilde{S}=\widetilde{S}%
(x,\cdot )\in \mathrm{Im}\mathcal{L\cap W}_{0}^{\perp }$. Indeed,
substituting%
\begin{equation}
\widetilde{g}(x,\cdot )=g(x,\cdot )+e^{\sigma x}\sum_{r=1}^{l}\psi
_{r}\int_{x}^{\infty }\frac{\left( \left. S\left( \tau ,\cdot \right)
\right\vert \varphi _{r}\right) }{\left( \left. \varphi _{r}\right\vert 
\mathcal{L}\varphi _{r}\right) }\,d\tau  \label{tr1}
\end{equation}%
in problem $\left( \ref{LP}\right) $ results in%
\begin{equation}
\left\{ 
\begin{array}{l}
B\dfrac{\partial \widetilde{g}}{\partial x}+\mathcal{L}\widetilde{g}-\sigma B%
\widetilde{g}=\widetilde{S},\;x>0,\smallskip \\ 
\widetilde{R}\widetilde{g}(0,\cdot )=g_{b},\text{ }\widetilde{R}%
=P_{+}-RPP_{-},%
\end{array}%
\right.  \label{NLP}
\end{equation}%
where, under the assumption that $\widetilde{R}\mathcal{Z}_{0}\subseteq 
\mathrm{D}(\mathcal{L})$,%
\begin{gather*}
\widetilde{S}=\widetilde{S}(x,\cdot )=e^{\sigma x}\left( S(x,\cdot
)-\sum_{r=1}^{l}B\psi _{r}\frac{\left( \left. S(x,\cdot )\right\vert \varphi
_{r}\right) }{\left( \left. \varphi _{r}\right\vert \mathcal{L}\varphi
_{r}\right) }\right) \in \mathrm{Im}\mathcal{L\cap W}_{0}^{\perp }\text{ for
all }x\in \mathbb{R}_{+} \\
\widetilde{S}(x,\cdot )\in L^{2}\left( \mathbb{R}_{+};\mathcal{\mathfrak{h}}%
\right) ,\text{ }g_{b}=f_{b}+\sum_{r=1}^{l}\widetilde{R}\psi
_{r}\int_{0}^{\infty }\frac{\left( \left. S\left( \tau ,\cdot \right)
\right\vert \varphi _{r}\right) }{\left( \left. \varphi _{r}\right\vert 
\mathcal{L}\varphi _{r}\right) }\,d\tau \in \mathcal{\mathfrak{h}}_{+}\cap 
\mathrm{D}(\mathcal{L}).
\end{gather*}%
Therefore, we may, without loss of generality, consider the problem $\left( %
\ref{LP}\right) $, as well as the penalized problem $\left( \ref{PP}\right) $%
, assuming that $S\in \mathrm{Im}\mathcal{L\cap W}_{0}^{\perp }$ for all $%
x\in \mathbb{R}_{+}$.

Denote by $\mathcal{I}:$ $\mathcal{\mathfrak{h}}_{+}\rightarrow \mathcal{%
\mathfrak{h}}$ the solution operator%
\begin{equation}
\mathcal{I}(f_{b})=g\left( 0,\cdot \right) ,  \label{sol1}
\end{equation}%
where $g\left( x,\cdot \right) \in L^{2}\left( \mathbb{R}_{+};\mathcal{%
\mathfrak{h}}\right) $ is the unique solution of the penalized problem $%
\left( \ref{PP}\right) $ in Proposition $\ref{P1}$, and by $\mathbb{I}:$ $%
\mathcal{\mathfrak{h}}_{+}\rightarrow \mathcal{\mathfrak{h}}$ the linear
solution operator%
\begin{equation}
\mathbb{I}(f_{b})=g\left( 0,\cdot \right) ,  \label{lso}
\end{equation}%
in the particular case where $g\left( x,\cdot \right) \in L^{2}\left( 
\mathbb{R}_{+};\mathcal{\mathfrak{h}}\right) $ is the unique solution in
Proposition $\ref{P1}$ of the homogeneous penalized problem%
\begin{equation}
\left\{ 
\begin{array}{l}
B\dfrac{\partial g}{\partial x}+\Lambda g=0,\;x>0\text{,}\smallskip \\ 
\widetilde{R}g(0,\cdot )=f_{b},\text{ }\widetilde{R}=P_{+}-RPP_{-}.%
\end{array}%
\right. \text{.}  \label{HPP}
\end{equation}%
Observe the relation%
\begin{equation*}
\mathcal{I}(f_{b}+\widetilde{f_{b}})=\mathcal{I}(f_{b})+\mathbb{I}(%
\widetilde{f_{b}})\text{ }
\end{equation*}%
for $\left\{ f_{b},\widetilde{f_{b}}\right\} \subset \mathcal{\mathfrak{h}}%
_{+}\cap \mathrm{D}(\mathcal{L})$.

\begin{lemma}
\label{L3}Let $S=S\left( x,\cdot \right) \in \mathrm{Im}\mathcal{L\cap W}%
_{0}^{\perp }$ for all $x\in \mathbb{R}_{+}$ and $e^{\widetilde{\sigma }%
x}S(x,\cdot )\in L^{2}\left( \mathbb{R}_{+};\mathcal{\mathfrak{h}}\right) $
for some $\widetilde{\sigma }>0$, and assume that $f_{b}\in \mathcal{%
\mathfrak{h}}_{+}\cap \mathrm{D}(\mathcal{L})$. Then the solution of the
penalized problem $\left( \ref{PP}\right) $ is a solution of the problem $%
\left( \ref{LP}\right) $ if and only if%
\begin{equation*}
\Pi _{+}\left( B\mathcal{I}(f_{b})\right) =\Pi _{0}\left( B\mathcal{I}%
(f_{b})\right) =0,
\end{equation*}%
or, equivalently, if and only if 
\begin{equation*}
\Pi _{+}\left( B\mathcal{I}(f_{b})\right) =\widetilde{\Pi }_{0}\left( B%
\mathcal{I}(f_{b})\right) =0,
\end{equation*}%
where the projection $\widetilde{\Pi }_{0}\in \mathfrak{L}(\mathcal{%
\mathfrak{h;}Z}_{0})$ on $\mathcal{Z}_{0}$ is the linear operator 
\begin{equation}
\widetilde{\Pi }_{0}=\sum_{s=1}^{l}\left( \left. \cdot \right\vert \psi
_{s}\right) \psi _{s}\text{.}  \label{pr2}
\end{equation}
\end{lemma}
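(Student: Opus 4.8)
The plan is to show that the penalized problem and the original linear problem $\left( \ref{LP}\right)$ share solutions precisely when the extra penalization terms in $\Lambda$ vanish, and that this vanishing is governed by the projections $\Pi_+$ and $\Pi_0$ (equivalently $\widetilde{\Pi}_0$) applied to $B$ times the boundary trace $\mathcal{I}(f_b)=g(0,\cdot)$. First I would recall that $\Lambda = \mathcal{L}-\sigma B + \alpha\Pi_+ B + \beta B\Pi_0 B$, so that a solution $g$ of $\left( \ref{PP}\right)$ solves $\left( \ref{LP}\right)$ if and only if the penalization term $\alpha\Pi_+(Bg) + \beta B\Pi_0(Bg)$ vanishes identically in $x$ — i.e. if and only if $\Pi_+(Bg(x,\cdot))=0$ and $\Pi_0(Bg(x,\cdot))=0$ for all $x>0$ (using that $B$ is non-singular by \textbf{H2}, so $B\Pi_0(Bg)=0$ forces $\Pi_0(Bg)=0$, and that $\Pi_+$ maps onto $\mathcal{Z}_+$ while $B\Pi_0 B$ maps into $B\mathcal{Z}_0$, which are in direct sum so the two penalization pieces cannot cancel each other).

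The core step is to upgrade ``vanishing for all $x$'' to ``vanishing at $x=0$''. For this I would derive, from the penalized equation, a closed system of scalar ODEs in $x$ satisfied by the quantities $p_i(x):=\left(\left. Bg(x,\cdot)\right\vert \phi_i\right)$ for $i\in\{1,\dots,k^+\}$ and $q_r(x):=\left(\left. Bg(x,\cdot)\right\vert \varphi_r\right)$ for $r\in\{1,\dots,l\}$ (the components picked out by $\Pi_+$ and $\Pi_0$). Pairing the equation $B g_x + \Lambda g = e^{\sigma x}S$ with $\phi_i$ and with $\varphi_r$, and using the orthogonality relations $\left( \ref{c2}\right)$, $\left( \ref{c4}\right)$, $\left( \ref{c5}\right)$ together with $\mathcal{L}\phi_i=0$, $\mathcal{L}\varphi_r=B\psi_r$, $S\in\mathrm{Im}\mathcal{L}\cap\mathcal{W}_0^\perp$, and self-adjointness, the off-diagonal couplings collapse and one obtains decoupled scalar equations of the schematic form $\dfrac{d}{dx}p_i = c_i\,\sigma\, p_i$ and $\dfrac{d}{dx}q_r = d_r\,\sigma\, q_r$ with strictly positive constants $c_i$, $d_r$ (this is exactly where the precise choices of $\alpha=2\sigma$, $\beta$, and the third argument of the maximum in $\left( \ref{e5}\right)$ — flagged in Lemma $\ref{L1}$ as ``of use first in Lemma $\ref{L3}$'' — enter, ensuring the exponential rates are positive). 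Since $g\in L^2(\mathbb{R}_+;\mathfrak{h})$ and hence $p_i,q_r\in L^2(\mathbb{R}_+)$, a growing exponential solution is impossible, so $p_i\equiv q_r\equiv 0$; in particular $p_i(0)=q_r(0)=0$, which is precisely $\Pi_+(B\mathcal{I}(f_b))=\Pi_0(B\mathcal{I}(f_b))=0$. Conversely, if these hold at $x=0$, uniqueness of the $L^2$ solution of the scalar ODEs gives $p_i\equiv q_r\equiv 0$, hence the penalization term vanishes for all $x$ and $g$ solves $\left( \ref{LP}\right)$.

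Finally I would record the equivalence $\Pi_0(B\mathcal{I}(f_b))=0 \iff \widetilde{\Pi}_0(B\mathcal{I}(f_b))=0$. Here $\Pi_0$ is built from the $\varphi_r$'s and lands in $\mathcal{Z}_0=\mathrm{span}\{\psi_1,\dots,\psi_l\}$, while $\widetilde{\Pi}_0=\sum_s(\left.\cdot\right\vert\psi_s)\psi_s$ is the orthogonal projection onto $\mathcal{Z}_0$; one checks that for $w\in\mathrm{Im}B$ (which $B\mathcal{I}(f_b)$ is, once one knows $\mathcal{I}(f_b)\in\mathrm{D}(B)$) the conditions $\left(\left. w\right\vert\varphi_r\right)=0$ for all $r$ and $\left(\left. w\right\vert\psi_s\right)=0$ for all $s$ cut out the same subspace, because the Gram-type pairing between $\{\psi_r\}$ and $\{\varphi_r\}$ is non-degenerate — indeed $\left(\left. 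B\psi_r\right\vert\varphi_s\right)=\alpha_r\delta_{rs}$ with $\alpha_r>0$ by $\left( \ref{c4}\right)$, so the $l\times l$ matrix $[(\left.\psi_r\vert\varphi_s\right)]$ is related to an invertible one and either family of functionals detects the $\mathcal{Z}_0$-component of $w$. I expect the main obstacle to be the bookkeeping in the ODE reduction: carefully isolating the diagonal terms, verifying that the source $S$ and the $\mathcal{L}$-image part $h$ contribute nothing to these particular pairings (using $S\perp\mathcal{W}_0$, $\mathcal{L}h\perp\ker\mathcal{L}$, and $\left(\left. Bh\vert\varphi_r\right)\right.$-type terms being absorbed correctly), and confirming the rate constants are positive — this last point is why the somewhat mysterious third term in the maximum defining $\sigma$ in $\left( \ref{e5}\right)$ was included.
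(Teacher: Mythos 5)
Your overall skeleton (reduce to the vanishing of the penalization term, then track the projected quantities via ODEs in $x$) is the same as the paper's, but the core ODE step is wrong in a way that breaks the argument. Pairing the penalized equation with $\phi _{i}$ and using $\alpha =2\sigma $, $(\ref{c2})$, $(\ref{c4})$ gives $\tfrac{d}{dx}\left( \left. Bg\right\vert \phi _{i}\right) +\sigma \left( \left. Bg\right\vert \phi _{i}\right) =0$, i.e.\ a \emph{decaying} exponential $\left( \left. Bg(x,\cdot )\right\vert \phi _{i}\right) =e^{-\sigma x}\left( \left. Bg(0,\cdot )\right\vert \phi _{i}\right) $, not a growing one; and the pairings with $\psi _{s}$ and $\varphi _{s}$ do \emph{not} decouple but form a coupled $2\times 2$ first-order system, equivalent to $\left( \partial _{x}^{2}-2\sigma \partial _{x}+\sigma ^{2}-\beta /\alpha _{s}\right) \left( \left. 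Bg\right\vert \psi _{s}\right) =0$ with characteristic roots $\sigma \pm \sqrt{\beta /\alpha _{s}}$ --- one growing (killed by the $L^{2}$ requirement) and one $\leq 0$ (admissible in $L^{2}$; this is what the third argument of the maximum in $(\ref{e5})$ guarantees, the opposite of the role you assign it). Hence membership in $L^{2}$ does \emph{not} force these quantities to vanish: each surviving mode is an explicit exponential multiple of its value at $x=0$, and that is exactly why ``vanishing for all $x$'' is equivalent to ``vanishing at $x=0$'' and why the latter is a genuine set of $k^{+}+l$ conditions on $f_{b}$. Your conclusion that $p_{i}\equiv q_{r}\equiv 0$ holds automatically would make the lemma vacuous and contradicts the proof of Theorem \ref{T1}, where boundary data are explicitly constructed with $\Pi _{+}\left( Bg_{i}(0,\cdot )\right) =\beta _{i}\phi _{i}\neq 0$ and $\widetilde{\Pi }_{0}\left( B\widetilde{g}_{r}(0,\cdot )\right) =\alpha _{r}\psi _{r}\neq 0$, yielding $\mathrm{codim}\left( \mathcal{P}\right) =k^{+}+l$ rather than $0$.

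A secondary gap is the equivalence of the $\Pi _{0}$- and $\widetilde{\Pi }_{0}$-conditions. This is not a static Gram-matrix fact: for a general $w\in \mathfrak{h}$ the $l$ conditions $\left( \left. w\right\vert \varphi _{r}\right) =0$ and the $l$ conditions $\left( \left. w\right\vert \psi _{s}\right) =0$ cut out different subspaces, and restricting to $w\in \mathrm{Im}\,B$ does not change that. The equivalence holds only for $w=Bg(0,\cdot )$ with $g$ a solution of the penalized problem, and it again comes from the coupled ODEs: $\left( \left. Bg\right\vert \psi _{s}\right) $ is an integral transform of $\left( \left. Bg\right\vert \varphi _{s}\right) $ and vice versa (once the $\phi _{i}$-components are accounted for), so one family vanishes at $x=0$ if and only if the other does. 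Both halves of the lemma therefore go through the dynamics of the penalized system, not through linear algebra on $\ker \mathcal{L}$.
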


\begin{proof}
Notice that a solution of the penalized problem $\left( \ref{PP}\right) $ is
a solution of problem $\left( \ref{NLP}\right) $ if and only if%
\begin{equation*}
\Pi _{+}\left( Bg\right) =\Pi _{0}\left( Bg\right) =0\text{.}
\end{equation*}%
Assume that $g=g\left( x,\cdot \right) \in L^2 \left( \mathbb{R}_{+};%
\mathcal{\mathfrak{h}}\right) $ is a solution of the penalized problem $%
\left( \ref{PP}\right) $. Then, with $\alpha =2\sigma $ and $\sigma $ given
by expression $\left( \ref{e5}\right) $,%
\begin{equation*}
\dfrac{\partial }{\partial x}\left( \left. Bg\right\vert \phi _{i}\right)
+\sigma \left( \left. Bg\right\vert \phi _{i}\right) =0\text{, }i\in \left\{
1,...,k^{+}\right\} ,
\end{equation*}%
\begin{equation}
\dfrac{\partial }{\partial x}\left( \left. Bg\right\vert \psi _{s}\right)
-\sigma \left( \left. Bg\right\vert \psi _{s}\right) +\beta \frac{\left(
\left. Bg\right\vert \varphi _{s}\right) }{\left( \left. \varphi
_{s}\right\vert \mathcal{L}\varphi _{s}\right) }=0\text{, }s\in \left\{
1,...,l\right\} ,  \label{b0}
\end{equation}%
\begin{equation}
\dfrac{\partial }{\partial x}\left( \left. Bg\right\vert \varphi _{s}\right)
+\left( \left. Bg\right\vert \psi _{s}\right) +2\sigma
\sum_{i=1}^{k^{+}}\left( \left. Bg\right\vert \phi _{i}\right) \left( \left.
\phi _{i}\right\vert \varphi _{s}\right) -\sigma \left( \left. Bg\right\vert
\varphi _{s}\right) =0\text{, }s\in \left\{ 1,...,l\right\} ,  \label{b1}
\end{equation}%
or%
\begin{equation*}
\left( \left. Bg\right\vert \phi _{i}\right) =e^{-\sigma x}\left( \left.
Bg(0,\cdot )\right\vert \phi _{i}\right) \;
\end{equation*}%
for $i\in \left\{ 1,...,k^{+}\right\} $, while for $s\in \left\{
1,...,l\right\} $ 
\begin{eqnarray*}
\left( \left. Bg\right\vert \psi _{s}\right) &=&\beta \int_{x}^{\infty
}e^{\sigma \left( x-\tau \right) }\frac{\left( \left. Bg(\tau ,\cdot
)\right\vert \varphi _{s}\right) }{\left( \left. \varphi _{s}\right\vert 
\mathcal{L}\varphi _{s}\right) }\,d\tau ,\ \text{and} \\
\left( \left. Bg\right\vert \varphi _{s}\right) &=&\int_{x}^{\infty
}e^{\sigma \left( x-\tau \right) }\left( 2\sigma \sum_{i=1}^{k^{+}}\left(
\left. Bg\left( \tau ,\cdot \right) \right\vert \phi _{i}\right) \left(
\left. \phi _{i}\right\vert \varphi _{s}\right) +\left( \left. Bg\left( \tau
,\cdot \right) \right\vert \psi _{s}\right) \right) d\tau \text{.}
\end{eqnarray*}%
However, assuming that $\left( \left. Bg\right\vert \phi _{1}\right)
=...=\left( \left. Bg\right\vert \phi _{k^{+}}\right) =0$, the substitution 
\begin{equation*}
-\beta \frac{\left( \left. Bg\right\vert \varphi _{s}\right) }{\left( \left.
\varphi _{s}\right\vert \mathcal{L}\varphi _{s}\right) }=\dfrac{\partial }{%
\partial x}\left( \left. Bg\right\vert \psi _{s}\right) -\sigma \left(
\left. Bg\right\vert \psi _{s}\right)
\end{equation*}%
in system $\left( \ref{b1}\right) $, results in%
\begin{equation*}
\left( \dfrac{\partial ^{2}}{\partial x^{2}}-2\sigma \dfrac{\partial }{%
\partial x}+\sigma ^{2}-\frac{\beta }{\left( \left. \varphi _{s}\right\vert 
\mathcal{L}\varphi _{s}\right) }\right) \left( \left. Bg\right\vert \psi
_{s}\right) =0\text{,}\;s\in \left\{ 1,...,l\right\} ,
\end{equation*}%
or, correspondingly, the substitution 
\begin{equation*}
-\left( \left. Bg\right\vert \psi _{s}\right) =\dfrac{\partial }{\partial x}%
\left( \left. Bg\right\vert \varphi _{s}\right) -\sigma \left( \left.
Bg\right\vert \varphi _{s}\right)
\end{equation*}%
in system $\left( \ref{b0}\right) $, results in 
\begin{equation*}
\left( \dfrac{\partial ^{2}}{\partial x^{2}}-2\sigma \dfrac{\partial }{%
\partial x}+\sigma ^{2}-\frac{\beta }{\left( \left. \varphi _{s}\right\vert 
\mathcal{L}\varphi _{s}\right) }\right) \left( \left. Bg\right\vert \varphi
_{s}\right) =0\text{,}\;s\in \left\{ 1,...,l\right\} .
\end{equation*}%
Then, with $\beta =\sigma \dfrac{\beta _{\min }+2\gamma _{1}\varepsilon
_{1}^{2}}{2\left( 1-\varepsilon _{2}^{2}\right) }$, it follows that 
\begin{equation*}
\left( \left. Bg\left( x,\cdot \right) \right\vert \psi _{s}\right)
=e^{\left( \sigma -\sqrt{\sigma \tfrac{\beta _{\min }+2\gamma
_{1}\varepsilon _{1}^{2}}{2\left( 1-\varepsilon _{2}^{2}\right) \left(
\left. \varphi _{s}\right\vert \mathcal{L}\varphi _{s}\right) }}\right)
x}\left( \left. Bg\left( 0,\cdot \right) \right\vert \psi _{s}\right) \text{,%
}\;s\in \left\{ 1,...,l\right\} ,
\end{equation*}%
and, correspondingly,%
\begin{equation*}
\left( \left. Bg\left( x,\cdot \right) \right\vert \varphi _{s}\right)
=e^{\left( \sigma -\sqrt{\sigma \tfrac{\beta _{\min }+2\gamma
_{1}\varepsilon _{1}^{2}}{2\left( 1-\varepsilon _{2}^{2}\right) \left(
\left. \varphi _{s}\right\vert \mathcal{L}\varphi _{s}\right) }}\right)
x}\left( \left. Bg\left( 0,\cdot \right) \right\vert \varphi _{s}\right) 
\text{,}\;s\in \left\{ 1,...,l\right\} .
\end{equation*}%
That is, $\Pi _{+}\left( Bg\right) =\Pi _{0}\left( Bg\right) =0$ if and only
if%
\begin{equation*}
\Pi _{+}\left( B\mathcal{I}(f_{b})\right) =\Pi _{0}\left( B\mathcal{I}%
(f_{b})\right) =0
\end{equation*}%
or, equivalently, if and only if%
\begin{equation*}
\Pi _{+}\left( B\mathcal{I}(f_{b})\right) =\widetilde{\Pi }_{0}\left( B%
\mathcal{I}(f_{b})\right) =0.
\end{equation*}
\end{proof}

Note that in the notations of $\left( \ref{tr1}\right) $: $\Pi _{+}\left(
Bg(x,\cdot )\right) =\Pi _{+}\left( B\widetilde{g}(x,\cdot )\right) $
and\linebreak\ $\widetilde{\Pi }_{0}\left( Bg(x,\cdot )\right) =\widetilde{%
\Pi }_{0}\left( B\widetilde{g}(x,\cdot )\right) $, why $\Pi _{+}\left( B%
\mathcal{I}(f_{b})\right) =\Pi _{+}\left( B\widetilde{\mathcal{I}}%
(g_{b})\right) $ and \linebreak $\widetilde{\Pi }_{0}\left( B\mathcal{I}%
(f_{b})\right) =\widetilde{\Pi }_{0}\left( B\widetilde{\mathcal{I}}%
(g_{b})\right) $, where $\mathcal{I}:$ $\mathcal{\mathfrak{h}}%
_{+}\rightarrow \mathcal{\mathfrak{h}}$ and $\widetilde{\mathcal{I}}:$ $%
\mathcal{\mathfrak{h}}_{+}\rightarrow \mathcal{\mathfrak{h}}$ and the
solution operators $\left( \ref{sol1}\right) $ for right hand side $\ S$ and 
$\widetilde{S}$, respectively. Therefore, we can state the following
Corollary.

\begin{corollary}
Let $S=S\left( x,\cdot \right) \in \mathrm{Im}\mathcal{L}$ for all $x\in 
\mathbb{R}_{+}$ and $e^{\widetilde{\sigma }x}S(x,\cdot )\in L^{2}\left( 
\mathbb{R}_{+};\mathcal{\mathfrak{h}}\right) $ for some $\widetilde{\sigma }%
>0$, and assume that $f_{b}\in \mathcal{\mathfrak{h}}_{+}\cap \mathrm{D}(%
\mathcal{L})$. Then the solution of the penalized problem $\left( \ref{PP}%
\right) $ is a solution of the problem $\left( \ref{LP}\right) $ if and only
if 
\begin{equation*}
\Pi _{+}\left( B\mathcal{I}(f_{b})\right) =\widetilde{\Pi }_{0}\left( B%
\mathcal{I}(f_{b})\right) =0,
\end{equation*}%
where $\widetilde{\Pi }_{0}\in \mathfrak{L}(\mathcal{\mathfrak{h;}Z}_{0})$
is the projection $\left( \ref{pr2}\right) $.
\end{corollary}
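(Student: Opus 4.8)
The plan is to derive the corollary from Lemma $\ref{L3}$, which already settles the case $S\in\mathrm{Im}\mathcal{L}\cap\mathcal{W}_{0}^{\perp}$, by using the change of unknown $\left( \ref{tr1}\right)$ to reduce the general hypothesis $S\in\mathrm{Im}\mathcal{L}$ to that situation. First I would introduce $\widetilde{g}$ and the shifted boundary datum $g_{b}$ exactly as in $\left( \ref{tr1}\right)$, writing $\widetilde{g}=g+e^{\sigma x}\sum_{r=1}^{l}\psi_{r}c_{r}(x)$ with $c_{r}(x)=\int_{x}^{\infty}\left( \left. S(\tau ,\cdot )\right\vert \varphi _{r}\right) \left( \left. \varphi _{r}\right\vert \mathcal{L}\varphi _{r}\right) ^{-1}d\tau$ and $g_{b}=f_{b}+\sum_{r=1}^{l}c_{r}(0)\widetilde{R}\psi_{r}$, so that $g$ solves $\left( \ref{LP}\right)$ with source $e^{\sigma x}S$ and datum $f_{b}$ precisely when $\widetilde{g}$ solves $\left( \ref{NLP}\right)$, i.e.\ $\left( \ref{LP}\right)$ with source $\widetilde{S}\in\mathrm{Im}\mathcal{L}\cap\mathcal{W}_{0}^{\perp}$ and datum $g_{b}$. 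One then checks that this substitution respects the standing hypotheses: the correction $e^{\sigma x}\sum_{r}\psi_{r}c_{r}$ lies in $\ker\mathcal{L}$ and decays like $e^{(\sigma-\widetilde{\sigma})x}$, so for $\sigma$ small enough all relevant functions stay in the required $L^{2}$-spaces, $\widetilde{S}$ keeps the exponential weight, and $g_{b}\in\mathfrak{h}_{+}\cap\mathrm{D}(\mathcal{L})$ because $\widetilde{R}\mathcal{Z}_{0}\subseteq\mathrm{D}(\mathcal{L})$; hence Proposition $\ref{P1}$ and Lemma $\ref{L3}$ are applicable to the transformed problem.

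The second ingredient is the pair of identities $\Pi_{+}(Bg(x,\cdot))=\Pi_{+}(B\widetilde{g}(x,\cdot))$ and $\widetilde{\Pi}_{0}(Bg(x,\cdot))=\widetilde{\Pi}_{0}(B\widetilde{g}(x,\cdot))$, valid for every $x$. These hold because, by $\left( \ref{e10}\right)$, $B(\widetilde{g}-g)=e^{\sigma x}\sum_{r=1}^{l}c_{r}(x)\,\mathcal{L}\varphi_{r}$ lies in $\mathrm{Im}\mathcal{L}=(\ker\mathcal{L})^{\perp}$, while $\Pi_{+}$ and $\widetilde{\Pi}_{0}$ are built from the vectors $\phi_{1},\dots,\phi_{k^{+}}$, respectively $\psi_{1},\dots,\psi_{l}$, all of which lie in $\ker\mathcal{L}$, and therefore annihilate $(\ker\mathcal{L})^{\perp}$. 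Evaluating at $x=0$ and recalling the solution operators $\left( \ref{sol1}\right)$ for the right-hand sides $S$ and $\widetilde{S}$, this gives $\Pi_{+}(B\mathcal{I}(f_{b}))=\Pi_{+}(B\widetilde{\mathcal{I}}(g_{b}))$ and $\widetilde{\Pi}_{0}(B\mathcal{I}(f_{b}))=\widetilde{\Pi}_{0}(B\widetilde{\mathcal{I}}(g_{b}))$. It is essential to work with $\widetilde{\Pi}_{0}$, the projection onto $\mathcal{Z}_{0}$ from $\left( \ref{pr2}\right)$, and not with the projection $\Pi_{0}$ onto $\mathcal{W}_{0}=\mathrm{span}\{\varphi_{1},\dots,\varphi_{l}\}$ entering the penalization: the latter gets shifted by the $\mathcal{W}_{0}$-component of $B(\widetilde{g}-g)$ and is not preserved by $\left( \ref{tr1}\right)$, which is exactly why the criterion of the corollary is phrased with $\widetilde{\Pi}_{0}$.

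Combining the two, Lemma $\ref{L3}$ applied to the transformed problem gives that the penalized solution for $\widetilde{S}$ is a solution of $\left( \ref{NLP}\right)$ if and only if $\Pi_{+}(B\widetilde{\mathcal{I}}(g_{b}))=\widetilde{\Pi}_{0}(B\widetilde{\mathcal{I}}(g_{b}))=0$, and by the identities above this is the same as $\Pi_{+}(B\mathcal{I}(f_{b}))=\widetilde{\Pi}_{0}(B\mathcal{I}(f_{b}))=0$; the first paragraph then transports the equivalence back to the penalized problem for $S$ and the problem $\left( \ref{LP}\right)$. The step I expect to demand the most care is the compatibility of the reduction $\left( \ref{tr1}\right)$ with the penalized structure: one has to confirm that $\mathcal{I}$ and $\widetilde{\mathcal{I}}$ are intertwined at the interface in the way just used — that passing from the penalized problem for $S$ to the one for $\widetilde{S}$ changes the interface value of $Bg$ only by vectors killed by $\Pi_{+}$ and $\widetilde{\Pi}_{0}$ — and to keep the boundary bookkeeping $f_{b}\mapsto g_{b}$ consistent throughout. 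Once that is settled, the corollary follows at once from Lemma $\ref{L3}$ together with the remark preceding it.
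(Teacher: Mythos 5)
Your argument is essentially the paper's own proof: the corollary is deduced from Lemma~\ref{L3} by the substitution $\left( \ref{tr1}\right)$, using that $B(\widetilde{g}-g)=e^{\sigma x}\sum_{r}c_{r}(x)\mathcal{L}\varphi _{r}\in \mathrm{Im}\mathcal{L}=\left( \ker \mathcal{L}\right) ^{\perp }$ is annihilated by $\Pi _{+}$ and $\widetilde{\Pi }_{0}$ (and your observation that this fails for $\Pi _{0}$ is exactly why the corollary is phrased with $\widetilde{\Pi }_{0}$). The one step you flag as needing care --- that $\mathcal{I}(f_{b})$ and $\widetilde{\mathcal{I}}(g_{b})$ are intertwined at $x=0$, which is not immediate because the shift $\left( \ref{tr1}\right)$ does \emph{not} preserve the penalization term $\beta B\Pi _{0}\left( B\cdot \right) $, so the shifted $S$-penalized solution is not literally the $\widetilde{S}$-penalized solution --- is precisely the point the paper's one-line note preceding the corollary also passes over without argument; so your proposal matches the paper's route, including the place where it is thinnest.
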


\begin{theorem}
\label{T1}Let $S=S\left( x,\cdot \right) \in \mathrm{Im}\mathcal{L}$ for all 
$x\in \mathbb{R}_{+}$ and $e^{\widetilde{\sigma }x}S(x,\cdot )\in
L^{2}\left( \mathbb{R}_{+};\mathcal{\mathfrak{h}}\right) $ for some $%
\widetilde{\sigma }>0$, $\widetilde{R}\mathcal{Z}_{\pm }\cup \widetilde{R}%
\mathcal{Z}_{0}\subseteq \mathrm{D}(\mathcal{L})$, and $\dim \left( 
\mathfrak{h}_{+},\mathrm{D}(\mathcal{L})\right) >k^{+}+l$. Then there exists
a unique solution $g\left( x,\cdot \right) \in L^{2}\left( \mathbb{R}_{+};%
\mathcal{\mathfrak{h}}\right) $ of the problem $\left( \ref{LP}\right) $,
such that 
\begin{equation*}
\mu \left\Vert g\right\Vert _{L^{2}\left( \mathbb{R}_{+};\mathcal{\mathfrak{h%
}}\right) }\leq \left\Vert S\right\Vert _{L^{2}\left( \mathbb{R}_{+};%
\mathcal{\mathfrak{h}}\right) }+\frac{1}{\sqrt{2\sigma }}\left\Vert \mathcal{%
L}g_{b}\right\Vert _{\mathcal{\mathfrak{h}}}+\sqrt{2\sigma }\left\Vert
Bg_{b}\right\Vert _{\mathcal{\mathfrak{h}}},
\end{equation*}%
for some $\sigma >0$, assuming 
\begin{equation*}
\mathrm{codim}\left( \left\{ \left. f_{b}\in \mathcal{\mathfrak{h}}_{+}\cap 
\mathrm{D}(\mathcal{L})\,\right\vert \,\Pi _{+}\left( B\mathbb{I}%
(f_{b})\right) =\widetilde{\Pi }_{0}\left( B\mathbb{I}(f_{b})\right)
=0\right\} \right) =k^{+}+l
\end{equation*}%
conditions on $f_{b}\in \mathcal{\mathfrak{h}}_{+}\cap \mathrm{D}(\mathcal{L}%
)$.
\end{theorem}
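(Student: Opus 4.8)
The plan is to assemble the statement from three facts already in hand: Proposition~$\ref{P1}$, which gives for every datum a unique $L^{2}$ solution of the penalized problem $\left( \ref{PP}\right) $; the Corollary following Lemma~$\ref{L3}$, which says this penalized solution solves $\left( \ref{LP}\right) $ precisely when $\Pi _{+}(B\mathcal{I}(f_{b}))=\widetilde{\Pi }_{0}(B\mathcal{I}(f_{b}))=0$; and the affine identity $\mathcal{I}(f_{b})=\mathcal{I}(0)+\mathbb{I}(f_{b})$ following from linearity of $\mathbb{I}$. Existence then reduces to a finite-dimensional linear-algebra step, uniqueness to showing that \emph{any} $L^{2}$ solution of $\left( \ref{LP}\right) $ is forced to solve $\left( \ref{PP}\right) $, and the norm bound is inherited from Proposition~$\ref{P1}$.

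For existence, fix $f_{b}\in \mathfrak{h}_{+}\cap \mathrm{D}(\mathcal{L})$ and let $g\in L^{2}(\mathbb{R}_{+};\mathfrak{h})$ be the penalized solution from Proposition~$\ref{P1}$, so $g(0,\cdot )=\mathcal{I}(f_{b})$. By the Corollary following Lemma~$\ref{L3}$, $g$ solves $\left( \ref{LP}\right) $ iff $\Pi _{+}(B\mathcal{I}(f_{b}))=\widetilde{\Pi }_{0}(B\mathcal{I}(f_{b}))=0$. Introduce the linear map $L\colon \mathfrak{h}_{+}\cap \mathrm{D}(\mathcal{L})\to \mathcal{Z}_{+}\oplus \mathcal{Z}_{0}$, $L(f_{b})=(\Pi _{+}(B\mathbb{I}(f_{b})),\widetilde{\Pi }_{0}(B\mathbb{I}(f_{b})))$, whose kernel is exactly the set in the statement. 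Using $\mathcal{I}(f_{b})=\mathcal{I}(0)+\mathbb{I}(f_{b})$, the condition becomes the affine equation $L(f_{b})=-(\Pi _{+}(B\mathcal{I}(0)),\widetilde{\Pi }_{0}(B\mathcal{I}(0)))$ in the space $\mathcal{Z}_{+}\oplus \mathcal{Z}_{0}$, which has dimension $k^{+}+l$. Since $\ker L$ has codimension $k^{+}+l$ by hypothesis — a hypothesis consistent with $\dim (\mathfrak{h}_{+},\mathrm{D}(\mathcal{L}))>k^{+}+l$ and meaning precisely that $L$ is onto $\mathcal{Z}_{+}\oplus \mathcal{Z}_{0}$ — the equation is solvable and its solution set is a coset of $\ker L$, hence an affine subspace of codimension $k^{+}+l$ (equal to $\ker L$ itself when $S\equiv 0$, as in the Corollaries). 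For each $f_{b}$ in it the corresponding penalized solution solves $\left( \ref{LP}\right) $.

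For uniqueness, after the substitution $\left( \ref{tr1}\right) $ we may assume $S\in \mathrm{Im}\mathcal{L}\cap \mathcal{W}_{0}^{\perp }$, so that $(S(x,\cdot )|\phi _{i})=(S(x,\cdot )|\psi _{s})=(S(x,\cdot )|\varphi _{s})=0$ for all $i,s$. Let $g\in L^{2}(\mathbb{R}_{+};\mathfrak{h})$ be any solution of $\left( \ref{LP}\right) $. Pairing the equation with $\phi _{i}$ ($1\le i\le k^{+}$) and with $\psi _{s}$, and using $(\mathcal{L}g|\phi _{i})=(\mathcal{L}g|\psi _{s})=0$, gives $\frac{d}{dx}(Bg|\phi _{i})=\sigma (Bg|\phi _{i})$ and $\frac{d}{dx}(Bg|\psi _{s})=\sigma (Bg|\psi _{s})$; since each of these scalar functions lies in $L^{2}(\mathbb{R}_{+})$ (each $\phi _{i},\psi _{s}$ being in $\mathrm{D}(B)$ and $g$ square-integrable) while $e^{\sigma x}\notin L^{2}(\mathbb{R}_{+})$, they vanish identically. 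Pairing next with $\varphi _{s}$ and using $(\mathcal{L}g|\varphi _{s})=(g|\mathcal{L}\varphi _{s})=(g|B\psi _{s})=(Bg|\psi _{s})=0$ gives $\frac{d}{dx}(Bg|\varphi _{s})=\sigma (Bg|\varphi _{s})$, so $(Bg|\varphi _{s})\equiv 0$ as well. Hence $\Pi _{+}(Bg)=\Pi _{0}(Bg)=0$, the penalization terms in $\left( \ref{PP}\right) $ drop out, and $g$ solves $\left( \ref{PP}\right) $; by Proposition~$\ref{P1}$ it is the unique such solution, so $\left( \ref{LP}\right) $ has at most one $L^{2}$ solution. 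The stated inequality is the estimate of Proposition~$\ref{P1}$, applied to the penalized version of the transformed problem and carried back through $\left( \ref{tr1}\right) $, after rewriting $\Lambda g_{b}$ via $\left( \ref{g1}\right) $ and absorbing its $\Pi _{+},\Pi _{0}$ parts into the $\Vert Bg_{b}\Vert $ term, up to adjusting constants.

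The main obstacle is the bookkeeping around the reduction $\left( \ref{tr1}\right) $: one has to check, as in the remark preceding the Corollary, that the $\Pi _{+}$ and $\widetilde{\Pi }_{0}$ projections of $B\mathcal{I}(f_{b})$ coincide with those of $B\widetilde{\mathcal{I}}(g_{b})$ (where $\widetilde{\mathcal{I}}$ is the solution operator for the transformed source $\widetilde{S}$), so that the conditions imposed on $f_{b}$ are unchanged by the transformation, and that $\widetilde{g}\in L^{2}$ iff $g\in L^{2}$. The second delicate point is that the codimension is \emph{exactly} $k^{+}+l$ and not merely at most $k^{+}+l$: $L$ maps into the $(k^{+}+l)$-dimensional space $\mathcal{Z}_{+}\oplus \mathcal{Z}_{0}$, so only the surjectivity of $L$ is at stake, and this is where $\dim (\mathfrak{h}_{+},\mathrm{D}(\mathcal{L}))>k^{+}+l$ is needed — it rules out the degenerate situation in which $\mathfrak{h}_{+}$ is too small for all prescribed boundary projections to be realizable by homogeneous penalized solutions.
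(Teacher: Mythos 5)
Your overall architecture is the same as the paper's: Proposition~\ref{P1} for the penalized problem, Lemma~\ref{L3} and its corollary to characterize when the penalized solution solves $\left( \ref{LP}\right) $, the affine decomposition $\mathcal{I}(f_{b})=\mathcal{I}(g_{b0})+\mathbb{I}(f_b - g_{b0})$ to reduce everything to the finite-dimensional map $L(f_{b})=\bigl(\Pi _{+}(B\mathbb{I}(f_{b})),\widetilde{\Pi }_{0}(B\mathbb{I}(f_{b}))\bigr)$, and the reduction $\left( \ref{tr1}\right) $ to sources in $\mathrm{Im}\mathcal{L}\cap \mathcal{W}_{0}^{\perp }$. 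Your uniqueness argument (pairing the equation with $\phi _{i}$, $\psi _{s}$, $\varphi _{s}$ to force $\Pi _{+}(Bg)=\Pi _{0}(Bg)=0$, so that any $L^{2}$ solution of $\left( \ref{LP}\right) $ solves $\left( \ref{PP}\right) $ and is therefore the unique penalized solution) is correct and in fact spelled out more explicitly than in the paper, which only gestures at it via Lemma~\ref{L3}.

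The genuine gap is the claim $\mathrm{codim}(\ker L)=k^{+}+l$. You invoke it "by hypothesis," but in the theorem it is part of the \emph{conclusion}: the assertion is that exactly $k^{+}+l$ conditions (no fewer) are needed, i.e.\ that $L$ is surjective onto the $(k^{+}+l)$-dimensional space $\mathcal{Z}_{+}\oplus \mathcal{Z}_{0}$. Your closing remark correctly identifies that "only the surjectivity of $L$ is at stake," but then asserts that $\dim \left( \mathfrak{h}_{+},\mathrm{D}(\mathcal{L})\right) >k^{+}+l$ settles it. It does not: a linear map from an arbitrarily large space into a $(k^{+}+l)$-dimensional space can still fail to be onto (its image could even be $\{0\}$), and without surjectivity both the exact count of conditions and the solvability of your affine equation $L(f_{b})=-\bigl(\Pi _{+}(B\mathcal{I}(g_{b0})),\widetilde{\Pi }_{0}(B\mathcal{I}(g_{b0}))\bigr)$ for general $S$ are unproved. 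This is precisely where almost all of the paper's proof is spent: for each $i\in \{1,...,k^{+}\}$ it introduces the auxiliary source $S_{i}=2\sigma e^{-2\sigma x}\left( B\phi _{i}-\beta _{i}\phi _{i}\right) $ and verifies that $h_{i}=g_{i}^{\prime }+e^{-\sigma x}\phi _{i}$ solves the homogeneous penalized problem $\left( \ref{HPP}\right) $ with boundary datum $g_{bi}+\widetilde{R}\phi _{i}$ and satisfies $\Pi _{+}\left( Bh_{i}(0,\cdot )\right) =\beta _{i}\phi _{i}$, $\widetilde{\Pi }_{0}\left( Bh_{i}(0,\cdot )\right) =0$; and for each $r\in \{1,...,l\}$ it uses $\widetilde{S}_{r}$ and $\widetilde{h}_{r}=\widetilde{g}_{r}^{\prime }+e^{-\sigma x}\left( \varphi _{r}+\tfrac{2\sigma \alpha _{r}}{\beta }\psi _{r}\right) $ to realize $\alpha _{r}\psi _{r}$ in the $\widetilde{\Pi }_{0}$ component. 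These $k^{+}+l$ boundary data produce a basis of $\mathcal{Z}_{+}\oplus \mathcal{Z}_{0}$ in the image of $L$, which is what actually forces the codimension to equal, rather than merely be bounded by, $k^{+}+l$ (and is also where the hypothesis $\widetilde{R}\mathcal{Z}_{\pm }\cup \widetilde{R}\mathcal{Z}_{0}\subseteq \mathrm{D}(\mathcal{L})$ earns its keep). Without some version of this construction your proof establishes only $\mathrm{codim}(\ker L)\leq k^{+}+l$ together with a conditional existence statement.
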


\begin{proof}
Denote 
\begin{equation*}
\mathcal{P}=\left\{ \left. f_{b}\in \mathcal{\mathfrak{h}}_{+}\cap \mathrm{D}%
(\mathcal{L})\,\right\vert \,\Pi _{+}\left( B\mathbb{I}(f_{b})\right) =%
\widetilde{\Pi }_{0}\left( B\mathbb{I}(f_{b})\right) =0\right\} .
\end{equation*}%
Since $\Pi _{+}\in \mathfrak{L}(\mathcal{\mathfrak{h;}Z}_{+})$ and $%
\widetilde{\Pi }_{0}\in \mathfrak{L}(\mathcal{\mathfrak{h;}Z}_{0})$,%
\begin{equation*}
\mathrm{codim}\left( \mathcal{P}\right) \leq \dim \mathcal{Z}_{+}+\dim 
\mathcal{Z}_{0}=k^{+}+l.
\end{equation*}%
Since, by assumption, $\dim \left( \mathcal{\mathfrak{h}}_{+}\cap \mathrm{D}(%
\mathcal{L})\right) >k^{+}+l$ 
\begin{equation*}
\widetilde{\mathcal{P}}_{\widetilde{S}}=\left\{ \left. f_{b}\in \mathcal{%
\mathfrak{h}}_{+}\cap \mathrm{D}(\mathcal{L})\,\right\vert \,\Pi _{+}\left( B%
\mathcal{I}(f_{b})\right) =\widetilde{\Pi }_{0}\left( B\mathcal{I}%
(f_{b})\right) =0\right\}
\end{equation*}%
is non-empty for any right hand side $\widetilde{S}$, such that $e^{%
\widetilde{\sigma }x}S(x,\cdot )\in L^{2}\left( \mathbb{R}_{+};\mathcal{%
\mathfrak{h}}\right) $ for some $\widetilde{\sigma }>0$. Let $\alpha
=2\sigma $ and let $g_{0}=g_{0}\left( x,\cdot \right) $ be the unique
solution of the penalized problem $\left( \ref{PP}\right) $, and therefore
of the undamped problem $\left( \ref{LP}\right) $, as well, for some $f_{b}=$
$g_{b0}\in \widetilde{\mathcal{P}}_{S}$.

Note that $f_{b1}-f_{b2}\in \mathcal{P}$ for $\left\{ f_{b1},f_{b2}\right\}
\subset \widetilde{\mathcal{P}}_{S}$, while $f_{b1}+f_{b2}\in \widetilde{%
\mathcal{P}}_{S}$ if $f_{b1}\in \widetilde{\mathcal{P}}_{S}$ and $f_{b2}\in 
\mathcal{P}$, and consequently,%
\begin{equation*}
\widetilde{\mathcal{P}}_{S}=g_{b0}+\mathcal{P}\text{.}
\end{equation*}

For $i\in \left\{ 1,...,k^{+}\right\} $: let $S_{i}\left( x,\cdot \right)
=2\sigma e^{-2\sigma x}\left( B\phi _{i}-\beta _{i}\phi _{i}\right) \in
L^{2}\left( \mathbb{R}_{+};\mathcal{\mathfrak{h}}\right) $, with $\beta
_{i}=\left( \left. B\phi _{i}\right\vert \phi _{i}\right) >0$, and $%
g_{bi}\in \widetilde{\mathcal{P}}_{S_{i}}$. Moreover, let $g_{i}^{\prime
}\left( x,\cdot \right) \in L^{2}\left( \mathbb{R}_{+};\mathcal{\mathfrak{h}}%
\right) $ be the unique solution of the problem $\left( \ref{PP}\right) $
with $S=S_{i}$ and $f_{b}=g_{bi}$; 
\begin{equation*}
\left\{ 
\begin{array}{l}
B\dfrac{\partial g_{i}^{\prime }}{\partial x}+\Lambda g_{i}^{\prime
}=2\sigma e^{-\sigma x}\left( B\phi _{i}-\beta _{i}\phi _{i}\right)
,\;x>0,\smallskip \\ 
\widetilde{R}g_{i}^{\prime }(0,\cdot )=g_{bi}.%
\end{array}%
\right.
\end{equation*}%
By simple calculations, it can be verified that%
\begin{equation*}
h_{i}(x,\cdot ):=g_{i}^{\prime }(x,\cdot )+e^{-\sigma x}\phi _{i}\in
L^{2}\left( \mathbb{R}_{+};\mathcal{\mathfrak{h}}\right) ,
\end{equation*}%
is the unique solution of the homogeneous penalized problem $\left( \ref{HPP}%
\right) $ with $f_{b}=g_{bi}+\widetilde{R}\phi _{i}$, while 
\begin{equation*}
g_{i}(x,\cdot ):=g_{0}(x,\cdot )+h_{i}(x,\cdot )=g_{0}(x,\cdot
)+g_{i}^{\prime }(x,\cdot )+e^{-\sigma x}\phi _{i}\in L^{2}\left( \mathbb{R}%
_{+};\mathcal{\mathfrak{h}}\right) ,
\end{equation*}%
is the unique solution of the penalized problem $\left( \ref{PP}\right) $
with $f_{b}=g_{b0}+g_{bi}+\widetilde{R}\phi _{i}$. It follows that 
\begin{eqnarray*}
\Pi _{+}\left( Bg_{i}(0,\cdot )\right) &=&\Pi _{+}\left( Bh_{i}(0,\cdot
)\right) =\beta _{i}\phi _{i},\text{\ }\beta _{i}=\left( \left. B\phi
_{i}\right\vert \phi _{i}\right) >0,\; \\
\widetilde{\Pi }_{0}\left( Bg_{i}(0,\cdot )\right) &=&\widetilde{\Pi }%
_{0}\left( Bh_{i}(0,\cdot )\right) =0.
\end{eqnarray*}

For $r\in \left\{ 1,...,l\right\} $: let $\widetilde{S}_{r}\left( x,\cdot
\right) =e^{-2\sigma x}\left( 4\sigma ^{2}\alpha _{r}-1\right) B\psi _{r}\in
L^{2}\left( \mathbb{R}_{+};\mathcal{\mathfrak{h}}\right) $, $\alpha
_{r}=\left( \left. B\psi _{r}\right\vert \varphi _{r}\right) =\left( \left.
L\varphi _{r}\right\vert \varphi _{r}\right) >0$ and $\widetilde{g}_{br}\in 
\mathcal{P}_{\widetilde{S}_{r}}$. Moreover, let $\widetilde{g}_{r}^{\prime
}\left( x,\cdot \right) \in L^{2}\left( \mathbb{R}_{+};\mathcal{\mathfrak{h}}%
\right) $ be the unique solution of the penalized problem $\left( \ref{PP}%
\right) $ with $S=\widetilde{S}_{r}$ and $f_{b}=\widetilde{g}_{br}$; 
\begin{equation*}
\left\{ 
\begin{array}{l}
B\dfrac{\partial \widetilde{g}_{r}^{\prime }}{\partial x}+\Lambda \widetilde{%
g}_{r}^{\prime }=e^{-\sigma x}\left( \dfrac{4\sigma ^{2}\alpha _{r}}{\beta }%
-1\right) B\psi _{r},\;x>0,\smallskip \\ 
\widetilde{R}\widetilde{g}_{r}^{\prime }(0,\cdot )=\widetilde{g}_{br}.%
\end{array}%
\right.
\end{equation*}%
Again by simple calculations, it can be verified that%
\begin{equation*}
\widetilde{h}_{r}(x,\cdot ):=\widetilde{g}_{r}^{\prime }(x,\cdot
)+e^{-\sigma x}\left( \varphi _{r}+\frac{2\sigma \alpha _{r}}{\beta }\psi
_{r}\right) \in L^{2}\left( \mathbb{R}_{+};\mathcal{\mathfrak{h}}\right) ,
\end{equation*}%
is the unique solution of the homogeneous penalized problem $\left( \ref{HPP}%
\right) $ with $f_{b}=\widetilde{g}_{br}+\widetilde{R}\left( \varphi _{r}+%
\dfrac{2\sigma \alpha _{r}}{\beta }\psi _{r}\right) $, while%
\begin{eqnarray*}
\widetilde{g}_{r}(x,\cdot ):= &&g_{0}(x,\cdot )+\widetilde{h}_{r}(x,\cdot )
\\
= &&g_{0}(x,\cdot )+\widetilde{g}_{r}^{\prime }(x,\cdot )+e^{-\sigma
x}\left( \varphi _{r}+\dfrac{2\sigma \alpha _{r}}{\beta }\psi _{r}\right)
\in L^{2}\left( \mathbb{R}_{+};\mathcal{\mathfrak{h}}\right) ,
\end{eqnarray*}%
is the unique solution of the penalized problem $\left( \ref{PP}\right) $
with $f_{b}=g_{b0}+\widetilde{g}_{br}+\widetilde{R}\left( \varphi _{r}+%
\dfrac{2\sigma \alpha _{r}}{\beta }\psi _{r}\right) $. It follows that%
\begin{eqnarray*}
\Pi _{+}\left( B\widetilde{g}_{r}(0,\cdot )\right) &=&\Pi _{+}\left( B%
\widetilde{h}_{r}(0,\cdot )\right) =0, \\
\widetilde{\Pi }_{0}\left( B\widetilde{g}_{r}(0,\cdot )\right) &=&\widetilde{%
\Pi }_{0}\left( B\widetilde{h}_{r}(0,\cdot )\right) =\alpha _{r}\psi _{r}.
\end{eqnarray*}

Consequently,%
\begin{equation*}
\mathrm{codim}\left( \mathcal{P}\right) =k^{+}+l,
\end{equation*}%
following by the uniqueness of solutions to the homogeneous penalized
problem $\left( \ref{HPP}\right) $ and the linear independence of $\left\{
\phi _{1},...,\phi _{k^{+}},\psi _{1},...,\psi _{l}\right\} $.
\end{proof}

\section{Half-space problem of evaporation and condensation - regime
transitions \label{S6}}

This section concerns Boltzmann(-type) equations $\left( \ref{P2}\right) $,
assuming properties \textbf{H1 }and\textbf{\ H3,} as well as \textbf{H4} in
the form $\left( \ref{h4}\right) $, cf. Remark $\ref{R1}$. Therefore,
Theorem $\ref{T0}\ $is applicable.

In general, the exponential speed of convergence - $\sigma _{u}>0$ in
problem $\left( \ref{P2}\right) $ - depends on $u$. Theorem $\ref{T0}$
provides existence of a unique solution and an exponential speed of
convergence for fixed $u$. However, on an interval of $u$ the exponential
speed of convergence may not be uniform - there might occur slowly varying
modes in some regions of $u$ (cf. \cite{BG-21} and references therein).
Nevertheless, on any bounded interval, whose closure does not contain any
degenerate value $u=u_{0}$, i.e. on any interval such that $l=0$ for all $u$
in the closure, there is a uniform exponential speed of convergence - $%
\sigma _{u}>0$ can be uniformly determined. Remind that $\left(
k^{+},k^{-},l\right) $ denotes the signature of the restriction of the
quadratic form $\left( \left. (v+u)\phi \right\vert \phi \right) $ to the
kernel of $\mathcal{L}$. Moreover, based on the arguments in the previous
sections one can prove that, the slowly varying modes can be eliminated, if
they occur, by imposing extra conditions on the indata at the interface. The
following result can be obtained:

\begin{theorem}
\label{T2}Let $u=u_{0}$ be a degenerate value of $u$, i.e. such that $l>0$
for $u=u_{0}$, assume that $\dim \left( \widehat{\mathcal{\mathfrak{h}}}_{+},%
\mathrm{D}(\mathcal{L})\right) >k_{0}^{+}+l$ - $\widehat{\mathcal{\mathfrak{h%
}}}_{+}=\left( L^{2}\left( \left( 1+\left\vert v\right\vert \right) \mathbf{1%
}_{v+u>0}\,d\mathbf{v}\right) \right) ^{s}\cap \mathrm{D}(\mathcal{L}\mathbf{%
1}_{v+u>0})$, while $k_{0}^{+}$ equals $k^{+}$ for $u=u_{0}$ - and assume
that for all $u$ in a neighborhood of $u_{0}$: $S_{u}=S_{u}(x,\mathbf{v})\in 
\mathrm{Im}\mathcal{L}$ for all $x\in \mathbb{R}_{+}$, $e^{\widetilde{\sigma 
}x}S_{u}(x,\mathbf{v})\in L^{2}\left( \mathbb{R}_{+};\left( L^{2}\left( d%
\mathbf{v}\right) \right) ^{s}\right) $ for some $\widetilde{\sigma }>0$,
and $\widetilde{R}_{u}\mathcal{Z}_{\pm }\cup \widetilde{R}_{u}\mathcal{Z}%
_{0}\subseteq \mathrm{D}(\mathcal{L}\mathbf{1}_{v+u>0})$, with $\widetilde{R}%
_{u}=\mathbf{1}_{v+u>0}-R_{u}P$, where $Pf(x,\mathbf{v})=f(x,\mathbf{v}_{-})$%
, while $\mathcal{Z}_{\pm }$ and $\mathcal{Z}_{0}$ are defined in $\left( %
\ref{c3}\right) $ - for $B=v+u_{0}$. Moreover, let $\Pi _{+}^{0}$ and $%
\widetilde{\Pi }_{0}^{0}$ denote the linear operators $\Pi _{+}$ $\left( \ref%
{pr}\right) $ and $\widetilde{\Pi }_{0}$ $\left( \ref{pr2}\right) $ for $%
u=u_{0}$, $k^{+}=k_{0}^{+}$ for $u=u_{0}$, and $\mathbb{I}$ be the linear
solution operator $\left( \ref{lso}\right) $. Then there exists a positive
number $\delta (u_{0})>0$, such that by posing 
\begin{equation*}
\mathrm{codim}\left( \left\{ \left. f_{bu}\in \widehat{\mathcal{\mathfrak{h}}%
}_{+}\right\vert \;\Pi _{+}^{0}\left( \left( v+u\right) \mathbb{I}%
(f_{bu})\right) =\widetilde{\Pi }_{0}^{0}\left( \left( v+u\right) \mathbb{I}%
\left( f_{bu}\right) \right) =0\right\} \right) =k_{0}^{+}+l,
\end{equation*}%
conditions on $f_{bu}\in \widehat{\mathcal{\mathfrak{h}}}_{+}$, there exists
a family $\left\{ f_{u}\right\} _{\left\vert u-u_{0}\right\vert \leq \delta
(u_{0})}$ of unique solutions $f_{u}=f_{u}(x,\mathbf{v})$ of the problem $%
\left( \ref{P2}\right) $, such that%
\begin{equation*}
e^{\sigma x}f_{u}(x,\mathbf{v})\in L^{2}\left( \mathbb{R}_{+};\left(
L^{2}\left( d\mathbf{v}\right) \right) ^{s}\right)
\end{equation*}%
for some positive number $\sigma >0$, independent of $u$, if $\left\vert
u-u_{0}\right\vert \leq \delta (u_{0})$.
\end{theorem}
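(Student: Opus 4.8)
This is a more precise form of Theorem~$\ref{T2a}$, and the plan is to re-run the penalized–problem scheme of Sections~$\ref{S4}$--$\ref{S5}$ uniformly on a neighbourhood $\left\vert u-u_{0}\right\vert \leq \delta (u_{0})$, working throughout with the spectral data of $u=u_{0}$. Fix an orthonormal basis $\left\{ \phi _{1}^{0},\dots ,\phi _{n-l}^{0},\psi _{1}^{0},\dots ,\psi _{l}^{0}\right\} $ of $\ker \mathcal{L}$ and vectors $\varphi _{1}^{0},\dots ,\varphi _{l}^{0}$ with $\mathcal{L}\varphi _{r}^{0}=(v+u_{0})\psi _{r}^{0}$ satisfying $(\ref{c2})$, $(\ref{c4})$, $(\ref{c5})$ for $B=v+u_{0}$, let $\Pi _{+}^{0}$, $\Pi _{0}^{0}$, $\widetilde{\Pi }_{0}^{0}$ be the operators $(\ref{pr})$, $(\ref{pr2})$ attached to them, and take as penalized problem $(\ref{PP})$ with $B=v+u$ and $\Lambda $ replaced by $\Lambda _{u}:=\mathcal{L}-\sigma (v+u)+\alpha \Pi _{+}^{0}(v+u)+\beta (v+u)\Pi _{0}^{0}(v+u)$. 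The key elementary point is that $(\ref{c2})$ at $u_{0}$ forces, for every $u$, the form $\left( \left. (v+u)\,\cdot \,\right\vert \cdot \right) $ on $\ker \mathcal{L}$ to be block diagonal in this basis, with entries $\beta _{i}^{0}+(u-u_{0})$ on $\mathcal{Z}_{\pm }^{0}$ and $(u-u_{0})I_{l}$ on $\mathcal{Z}_{0}^{0}$, while $\mathcal{L}\varphi _{r}^{0}=(v+u)\psi _{r}^{0}-(u-u_{0})\psi _{r}^{0}$ and $(v+u)\varphi _{r}^{0}=(v+u_{0})\varphi _{r}^{0}+(u-u_{0})\varphi _{r}^{0}$; hence every object entering the construction is an $O(\left\vert u-u_{0}\right\vert )$ perturbation of its $u_{0}$-counterpart.

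First I would redo Lemma~$\ref{L1}$ with these $u_{0}$-operators. Relative to $(\ref{e1})$ the only genuinely new term is $-\sigma (u-u_{0})\left\Vert q_{0}\right\Vert ^{2}$, coming from $\left( \left. (v+u)q_{0}\right\vert q_{0}\right) $ (which vanished at $u_{0}$), together with $O(\left\vert u-u_{0}\right\vert )$ corrections of the $\Pi _{0}^{0}$-penalty expressions; moreover the constant $\gamma $ in $(\ref{c1})$ may be chosen uniform, since $\mathcal{L}\geq \gamma _{0}(1+\left\vert v\right\vert )$ and $1+\left\vert v+u\right\vert \leq (1+\left\vert u_{0}\right\vert +\delta )(1+\left\vert v\right\vert )$. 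Taking $\alpha ,\beta ,\sigma $ as in $(\ref{e4})$--$(\ref{e5})$ but with $\beta _{\min }$ now the minimum of $\left\vert \beta _{i}^{0}\right\vert $ over $i\leq n-l$ (bounded away from zero), and shrinking $\delta (u_{0})$, the estimate survives with a coercivity constant $\mu >0$ independent of $u$; Lemma~$\ref{L2}$ and Proposition~$\ref{P1}$ then carry over verbatim — the boundary inequality $\left( \left. (v+u)g(0,\cdot )\right\vert g(0,\cdot )\right) \leq 0$ is precisely property $(\ref{h4})$ of $R_{u}$ — so that for every $f_{bu}$ the penalized problem has a unique solution with $e^{\sigma x}f_{u}\in L^{2}\left( \mathbb{R}_{+};\left( L^{2}\left( d\mathbf{v}\right) \right) ^{s}\right) $, the same $\sigma $ for all $u$. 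A weighted variant (replacing $\Lambda _{u}$ by $\Lambda _{u}-\varepsilon _{0}(v+u)$ and exploiting the room left by the $\left( 1+\left\vert v+u\right\vert \right) $-term in that estimate) shows, still uniformly, that $g_{u}$ itself decays exponentially.

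The substance is the uniform analogue of Section~$\ref{S5}$, and this is where the main difficulty lies. As at $u_{0}$, a penalized solution $g_{u}$ solves $(\ref{LP})$ iff $\alpha \Pi _{+}^{0}((v+u)g_{u})+\beta (v+u)\Pi _{0}^{0}((v+u)g_{u})\equiv 0$, which — since $\left\{ \phi _{1}^{0},\dots ,\phi _{k_{0}^{+}}^{0}\right\} \cup \left\{ (v+u)\varphi _{1}^{0},\dots ,(v+u)\varphi _{l}^{0}\right\} $ stays linearly independent for $\left\vert u-u_{0}\right\vert $ small — amounts to $\Pi _{+}^{0}((v+u)g_{u})\equiv 0$ and $\Pi _{0}^{0}((v+u)g_{u})\equiv 0$. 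Projecting the penalized equation onto $\phi _{i}^{0}$, $\psi _{s}^{0}$, $\varphi _{s}^{0}$ gives, as in $(\ref{b0})$--$(\ref{b1})$, a finite-dimensional linear system for the projections of $(v+u)g_{u}$: at $u_{0}$ it decouples into a scalar part with root $-\sigma $ and $2\times 2$ blocks with roots $\sigma \pm \sqrt{\beta /\alpha _{s}}$, but for $u\neq u_{0}$ it acquires $O(\left\vert u-u_{0}\right\vert )$ constant couplings and an $O(\left\vert u-u_{0}\right\vert )$ forcing by the $L^{2}$-functions $x\mapsto \left( \left. g_{u}(x,\cdot )\right\vert \psi _{s}^{0}\right) $, which appear because $(v+u)\psi _{s}^{0}\notin \mathrm{Im}\mathcal{L}$ when $u\neq u_{0}$ (so that $\left( \left. \mathcal{L}g_{u}\right\vert \varphi _{s}^{0}\right) =\left( \left. (v+u)g_{u}\right\vert \psi _{s}^{0}\right) -(u-u_{0})\left( \left. g_{u}\right\vert \psi _{s}^{0}\right) $) and do not themselves satisfy a closed ordinary differential equation. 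The hard part is precisely to close this system up; the safe route is to treat the whole removal-of-penalization step as an $O(\left\vert u-u_{0}\right\vert )$ perturbation of the $u_{0}$-case: the perturbed coefficient matrix is a hyperbolic perturbation of the $u_{0}$-one and keeps its inertia ($k_{0}^{+}+l$ stable, $l$ unstable directions), so an $L^{2}$ solution is pinned to the stable manifold, and by the constructions in the proof of Theorem~$\ref{T1}$ the boundary map $f_{bu}\mapsto \bigl( \Pi _{+}^{0}((v+u_{0})\,\cdot \,)(0),\ \widetilde{\Pi }_{0}^{0}((v+u_{0})\,\cdot \,)(0)\bigr) $ is onto $\mathcal{Z}_{+}^{0}\oplus \mathcal{Z}_{0}^{0}$ with a bounded right inverse. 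Surjectivity — hence the codimension $k_{0}^{+}+l$ exactly, the upper bound being immediate from $\dim \mathcal{Z}_{+}^{0}+\dim \mathcal{Z}_{0}^{0}$ and the dimension hypothesis of the theorem giving non-emptiness of the relevant set as in Theorem~$\ref{T1}$ — is stable under such a perturbation, so one fixes $\delta (u_{0})$ once and for all so small that the uniform coercivity, the hyperbolicity of the projection system, and this surjectivity all hold for $\left\vert u-u_{0}\right\vert \leq \delta (u_{0})$; imposing the $k_{0}^{+}+l$ conditions $\Pi _{+}^{0}((v+u)\mathbb{I}(f_{bu}))=\widetilde{\Pi }_{0}^{0}((v+u)\mathbb{I}(f_{bu}))=0$ then yields the claimed family $\left\{ f_{u}\right\} $ of unique solutions of $(\ref{P2})$ with $e^{\sigma x}f_{u}\in L^{2}$ and $\sigma $ independent of $u$.
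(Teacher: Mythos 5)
Your scheme diverges from the paper's at the decisive point, and the divergence opens a genuine gap. The paper does \emph{not} freeze the penalization at $u_{0}$: for each $u\neq u_{0}$ in a punctured neighborhood one has $l=0$, so Theorem \ref{T0}/\ref{T1} applies verbatim with the \emph{actual}-$u$ spectral data (for which the projection system in Lemma \ref{L3} genuinely closes), and the whole content of Theorem \ref{T2} is a uniformity statement: the only source of degeneration of the constants in Lemma \ref{L1} is that the directions $\psi_{s0}$ acquire eigenvalues $\widetilde{\beta}_{i}=u-u_{0}\rightarrow 0^{-}$, which sends $\beta_{\min}$ $\left( \ref{e4}\right)$ to zero and $\widehat{\beta}_{\max}$ $\left( \ref{e3a}\right)$ to infinity. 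The extra conditions $\widetilde{\Pi}_{0}^{0}\left( \left( v+u\right) \mathcal{I}(f_{bu})\right) =0$ are shown (via Lemma \ref{L3} at the actual $u$) to annihilate precisely the components of the solution along those degenerating directions, after which the coercivity argument is rerun without them and all constants are uniform. Your uniform-coercivity discussion for the frozen operator $\Lambda_{u}$ is fine as far as it goes, but it is not where the difficulty sits.

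The gap is in your removal-of-penalization step. You correctly observe that with $\Pi_{+}^{0},\Pi_{0}^{0}$ frozen at $u_{0}$ and $B=v+u$, the projections of the penalized equation onto $\varphi_{s}^{0}$ produce the term $\left( \left. \mathcal{L}g_{u}\right\vert \varphi_{s}^{0}\right) =\left( \left. (v+u)g_{u}\right\vert \psi_{s}^{0}\right) -(u-u_{0})\left( \left. g_{u}\right\vert \psi_{s}^{0}\right) $, whose last summand is an $L^{2}$ function of $x$ that is \emph{not} one of the finite-dimensional unknowns. Your proposed repair — hyperbolic perturbation of the coefficient matrix, persistence of inertia, stability of surjectivity of the boundary map — only controls the \emph{homogeneous} part of that system. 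What is actually needed for a penalized solution to solve $\left( \ref{LP}\right) $ is that the projections vanish \emph{identically in} $x$, and in the presence of the inhomogeneous forcing $(u-u_{0})\left( \left. g_{u}\right\vert \psi_{s}^{0}\right) $ (determined by the full infinite-dimensional solution, not by the projections) the vanishing of $k_{0}^{+}+l$ quantities at $x=0$ no longer implies identical vanishing: the particular solution driven by the forcing is generically nonzero. Consequently your argument does not show that the admissible set of boundary data is a subspace of finite codimension at all, let alone of codimension $k_{0}^{+}+l$; for $u\neq u_{0}$ the frozen-penalization problem generically fails to reproduce solutions of $\left( \ref{P2}\right) $ for \emph{any} choice of $f_{bu}$. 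To close the argument you must either revert to the actual-$u$ penalization (so that $\mathcal{L}\varphi_{s}=B\psi_{s}$ with the same $B$ and the system closes, as in Lemma \ref{L3}) and then prove uniformity of the constants as the paper does, or supply a genuinely new estimate controlling the residual projections in terms of $\left\vert u-u_{0}\right\vert $ and showing they can be removed by a fixed finite number of conditions — neither of which is in your proposal.
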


\begin{remark}
\label{RH}Let $u=u_{0}$ be a degenerate value of $u$ of order $l$, and let $%
k^{+}=k_{0}^{+}$ for $u=u_{0}$, with 
\begin{eqnarray*}
\phi _{i} &=&\phi _{i0}\ \text{for\ all\ }i\in \left\{
1,...,k_{0}^{+}=k^{+}\right\} , \\
\psi _{s} &=&\psi _{s0}\text{ }\ \text{for\ all}\;s\in \left\{
1,...,l\right\} .
\end{eqnarray*}%
Then, by the orthogonality relations $\left( \ref{c2}\right) $,%
\begin{eqnarray*}
\left( \left. \left( v+u\right) \phi _{i0}\right\vert \phi _{j0}\right)
&=&\left( \beta _{i}^{0}+u-u_{0}\right) \delta _{ij},\;\beta _{i}^{0}=\left(
\left. \left( v+u_{0}\right) \phi _{i0}\right\vert \phi _{i0}\right) >0\text{%
,} \\
\left( \left. \left( v+u\right) \psi _{s0}\right\vert \psi _{r0}\right)
&=&\left( u-u_{0}\right) \delta _{rs},\;\left( \left. \left( v+u\right) \phi
_{i0}\right\vert \psi _{r0}\right) =0.
\end{eqnarray*}%
Hence, for all $u>u_{0}$, such that there is no degenerate value on the
interval $\left( u_{0},u\right] $, $k^{+}=k_{0}^{+}+l$, and it is possible
to choose%
\begin{eqnarray*}
\phi _{i} &=&\phi _{i0}\text{ }\ \text{for\ all }\;i\in \left\{
1,...,k_{0}^{+}=k^{+}-l\right\} , \\
\phi _{k^{+}-l+s} &=&\psi _{s0}\text{ }\ \text{for\ all }\;s\in \left\{
1,...,l\right\} .
\end{eqnarray*}%
That is, we impose no extra conditions - compared to the number of
conditions imposed in Theorem $\ref{T0}$ - on the boundary data as $u>u_{0}$%
. \ On the other hand, for all $u<u_{0}$, such that there is no degenerate
value on the interval $\left[ u,u_{0}\right) $, $k^{+}=k_{0}^{+}$, and it is
possible to choose%
\begin{eqnarray*}
\phi _{i} &=&\phi _{i0}\;\ \text{for\ all }i\in \left\{
1,...,k_{0}^{+}\right\} , \\
\phi _{k^{+}+s} &=&\psi _{s0}\;\ \text{for\ all }s\in \left\{
1,...,l\right\} .
\end{eqnarray*}%
That is, we impose $l$ extra conditions - compared to the number of
conditions imposed in Theorem $\ref{T0}$ - on the boundary data as $u<u_{0}$.
\end{remark}

Note that in the notations of Lemma $\ref{L1}$, any solution of the undamped
problem $\left( \ref{LP}\right) $ - cf. the proof of Lemma $\ref{L3}$ - must
satisfy $q_{+}=q_{0}=0$. Repeating the arguments of the proof of Lemma $\ref%
{L1}$, assuming that $q_{+}=q_{0}=0$, will remove any smallness assumptions
on $\sigma $ or $\mu $ - obtained through large $\widehat{b}_{\max }$ $%
\left( \ref{e3a}\right) $; $\widehat{b}_{\max }\gg 1$, or small $b_{\min }$ $%
\left( \ref{e4}\right) $; $b_{\min }\ll 1$ -, for $u\geq u_{0}$ as long as $%
l=0$ for any $u\in (u_{0},u]$. However, this is not the case as $%
u\rightarrow u_{0}^{-}$. For $u\neq u_{0}$: $\widetilde{\beta }%
_{i}(u):=\left( \left. \left( v+u\right) \phi _{i0}\right\vert \phi
_{i0}\right) =\beta _{i}^{0}+u-u_{0}$ for $i\in \left\{
1,...,k_{0}^{+}\right\} $, while $\widetilde{\beta }_{i}(u):=\left( \left.
\left( v+u\right) \psi _{s0}\right\vert \psi _{s0}\right) =u-u_{0}$ for $%
i\in \left\{ k_{0}^{+}+1,...,k_{0}^{+}+l\right\} $; implying that $%
\widetilde{\beta }_{k_{0}^{+}+1},...,\widetilde{\beta }_{k_{0}^{+}+l}$ tend
to zero as $u$ tends to $u_{0}$, while this will not be the case for $%
\widetilde{\beta }_{1},...,\widetilde{\beta }_{k_{0}^{+}}$. That is, $%
\widetilde{\beta }_{i}\rightarrow 0^{-}$ as $u\rightarrow u_{0}^{-}$ for all 
$i\in \left\{ k_{0}^{+}+1,...,k_{0}^{+}+l\right\} $, while $\widetilde{\beta 
}_{i}\nrightarrow 0$ as $u\rightarrow u_{0}^{-}$ for all $i\in \left\{
1,...,k_{0}^{+}\right\} $. The fact that $\widetilde{\beta }_{i}\rightarrow
0^{-}$ as $u\rightarrow u_{0}^{-}$ for all $i\in \left\{
k_{0}^{+}+1,...,k_{0}^{+}+l\right\} $ is - not taking the indata $f_{bu}$
into consideration - equivalent to that there will occur $l$ slowly varying
modes as $u\rightarrow u_{0}^{-}$. However, assuming that $\widetilde{\Pi }%
_{0}^{0}\left( \left( v+u\right) \mathcal{I}\left( f_{bu}\right) \right) =0$%
, will in the view of Lemma $\ref{L3}$ be equivalent to - again in the
notations of Lemma $\ref{L1}$ - that $q_{-}=%
\sum_{i=k_{0}^{+}+l+1}^{n-l}b_{i}\phi _{i}$ ($%
b_{k_{0}^{+}+1}=...=b_{k_{0}^{+}+1}=0$) also for $u<u_{0}$ as long as $l=0$
for any $u\in \lbrack u,u_{0})$. Then any smallness assumptions on $\sigma $
or $\mu $ can be removed; removing the slowly varying modes in a
neighborhood of $u_{0}$, and hence, resulting in that a uniform exponential
speed of convergence $\sigma _{u}$ in a neighborhood $\mathcal{U}$ of $u_{0}$
can be obtained. \ Indeed, in the view of Theorem $\ref{T0}$, this means to
impose $l$ extra conditions on the indata for $u$ less than $u_{0}$.

For the Boltzmann equation, the case of complete absorption at the
interface, i.e. with $R=R_{u}\equiv 0$, is well studied in the literature,
and especially, when the disturbed particles are assumed to be distributed
in accordance to the Maxwellian $M_{B}=M_{Bu}\left( \mathbf{v}\right) $ of
the interface - of the condensed phase - see e.g. the review \cite{BGS-06}
and references therein. Linearizing around a Maxwellian $M=M\left( \mathbf{v}%
\right) $%
\begin{equation*}
F=M+\sqrt{M}f
\end{equation*}%
- neglecting quadratic terms - one obtain a system of the form $\left( \ref%
{P2}\right) $, where (with $R=R_{u}\equiv 0$) 
\begin{equation*}
f_{bu}=f_{bu}\left( \mathbf{v}\right) =M^{-1/2}(M_{Bu}-M).
\end{equation*}%
Let $M_{B}=M_{B}\left( \mathbf{v}\right) $ be an arbitrary Maxwellian for a
mixture of $s$ species, and denote $M_{Bu}=M_{B}\left( \mathbf{v}+\mathbf{u}%
\right) $ after a shift $\mathbf{v\longmapsto v}+\mathbf{u}$, with $\mathbf{u%
}=\left( u,u_{2},u_{3}\right) $, in the velocity space. Then $%
f_{bu}=M^{-1/2}(M_{Bu}-M)\in \left( L^{2}\left( \left( 1+\left\vert \mathbf{v%
}\right\vert \right) \mathbf{1}_{u+v>0}\,d\mathbf{v}\right) \right) ^{s}$
has $d+s+1$ parameters: $\left\{ n_{0,1},...,n_{0,s}\right\} \subset \mathbb{%
R}_{+}$, $\mathbf{u}_{0}-\mathbf{u}\in \mathbb{R}^{d}$, $T_{0}\in \mathbb{R}%
_{+}$ - the number densities of the $s$ species, the bulk velocity (after
the shift in the velocity space), and the temperature at the boundary,
respectively. By Theorem $\ref{T1}$, if imposing $k_{u}^{+}+l_{u}$ (where
the subindex $u$ indicates the dependence on $u$) conditions on $f_{bu}$
there is a unique solution (for a fixed $u$) to problem $\left( \ref{P}%
\right) $ for each fixed $f_{bu}$ satisfying the conditions. Accordingly,
there will be (at least\footnote{%
Depending on how many of the conditions that will be of actual relevance on
the subset $\left\{ \left. f_{bu}\right\vert
\,f_{bu}=M^{-1/2}(M_{Bu}-M);\left\{ n_{0,1},...,n_{0,s}\right\} \subset 
\mathbb{R}_{+},\mathbf{u}_{0}\in \mathbb{R}^{d},T_{0}\in \mathbb{R}%
_{+}\right\} $ of $\mathcal{\mathfrak{h}}_{+}\cap \mathrm{D}(\mathcal{L})$.
Indeed, it is our firm belief that all of them will be, and that \textit{at
least} can be removed, but this still remains to be proven.}) $k_{u}^{-}$
(again the subindex $u$ indicates the dependence on $u$) free parameters of $%
f_{bu}$ left. Furthermore, by Theorem $\ref{T2}$, to have a unique solution
with a uniform exponential speed of convergence on a closed interval $%
\mathcal{U}$ of $u$, it is needed to impose $\underset{u\in \mathcal{U}}{%
\max }\left( k_{u}^{+}+l_{u}\right) $ conditions on $f_{bu}$. Then there
will be (at least\footnote{%
See comment in the footnote above.}) $\underset{u\in \mathcal{U}}{\min }%
\left( k_{u}^{-}\right) $ free parameters of $f_{bu}$ left. The
corresponding problem for monatomic single species in a neighborhood of $u=0$%
, in the nonlinear context, is considered in more details in \cite{BG-21},
see also \cite{LY-13}. It seems most likely that our results can be extended
to the weakly nonlinear case, by assuming additional conditions similar to $%
\left( \ref{d1}\right) $-$\left( \ref{d3}\right) $ on the linearized
operator, as well as some reasonable conditions on the nonlinear part,
applying methods similar to the ones in \cite{BG-21, Go-08}. Though, this
will be a topic for future studies.

\section{Appendix \label{A1}}

This appendix concerns the degenerate values of the flow velocity - the
values for which $l>0$ -, and orthogonal bases $\left( \ref{c2}\right) $ for
some important particular variants of the Boltzmann equation. In fact, the
first, second, and fourth cases below are particular cases of the fifth and
last one, but are still presented separately, due to their importance on
their own.

\paragraph{Monatomic single species}

For the Boltzmann equation for monatomic single species - in dimension $d$ -
let $\mathcal{\mathfrak{h}}=L^{2}\left( d\mathbf{v}\right) $, with the inner
product%
\begin{equation*}
\left( \left. f\right\vert g\right) =\int_{\mathbb{R}^{d}}fg\,d\mathbf{v}%
\text{, }f,g\in L^{2}\left( d\mathbf{v}\right) \text{.}
\end{equation*}%
Making the linearization%
\begin{equation}
F=M+\sqrt{M}f  \label{lin}
\end{equation}%
around a non-drifting Maxwellian 
\begin{equation*}
M=\dfrac{\rho m^{d/2}}{(2\pi T)^{d/2}}e^{-m\left\vert \mathbf{v}\right\vert
^{2}/\left( 2T\right) },
\end{equation*}%
an orthogonal basis $\left( \ref{c2}\right) $ of the kernel 
\begin{equation*}
\ker \mathcal{L}=\mathrm{span}\left\{ \sqrt{M},\sqrt{M}v,\sqrt{M}v_{2},...,%
\sqrt{M}v_{d},\sqrt{M}\left\vert \mathbf{v}\right\vert ^{2}\right\}
\end{equation*}%
of the linearized operator $\mathcal{L}$ is \cite{CGS-88}%
\begin{equation*}
\left\{ 
\begin{array}{l}
\phi _{1}=\sqrt{\frac{m}{2\rho T}}\sqrt{M}\left( \sqrt{\frac{m}{d(d+2)T}}%
\left\vert \mathbf{v}\right\vert ^{2}+v\right) \\ 
\phi _{2}=\sqrt{\frac{m}{2\rho T}}\sqrt{M}\left( \sqrt{\frac{m}{d(d+2)T}}%
\left\vert \mathbf{v}\right\vert ^{2}-v\right) \\ 
\phi _{3+j}=\sqrt{\frac{m}{\rho T}}\sqrt{M}v_{2+j},\text{ }j\in \left\{
0,...,d-2\right\} , \\ 
\phi _{d+2}=\sqrt{\frac{d+2}{2\rho }}\sqrt{M}\left( \frac{m}{(d+2)T}%
\left\vert \mathbf{v}\right\vert ^{2}-1\right) .%
\end{array}%
\right.
\end{equation*}%
Indeed, this can be obtained by - in addition to the fact that odd
components of the integrands can be discarded, due to symmetry of the
integration domain - the equalities%
\begin{eqnarray}
\left( \left. e^{-a\left\vert \mathbf{v}\right\vert ^{2}}\right\vert
1\right) &=&\left( \frac{\pi }{a}\right) ^{d/2}\text{,}  \notag \\
d\left( \left. e^{-a\left\vert \mathbf{v}\right\vert ^{2}}v\right\vert
v\right) &=&\left( \left. e^{-a\left\vert \mathbf{v}\right\vert
^{2}}\right\vert \left\vert \mathbf{v}\right\vert ^{2}\right) =\frac{d}{2a}%
\left( \frac{\pi }{a}\right) ^{d/2}\text{, and}  \notag \\
d\left( \left. e^{-a\left\vert \mathbf{v}\right\vert ^{2}}v\right\vert
v\left\vert \mathbf{v}\right\vert ^{2}\right) &=&\left( \left.
e^{-a\left\vert \mathbf{v}\right\vert ^{2}}\left\vert \mathbf{v}\right\vert
^{2}\right\vert \left\vert \mathbf{v}\right\vert ^{2}\right) =\frac{d\left(
d+2\right) }{4a^{2}}\left( \frac{\pi }{a}\right) ^{d/2}\text{, }a>0\text{.}
\label{ia1}
\end{eqnarray}%
The degenerate values of $u$ become \cite{CGS-88}%
\begin{equation*}
u_{0}=0\text{ and }u_{\pm }=\pm \sqrt{\frac{T}{m}}\sqrt{\frac{d+2}{d}}\text{,%
}
\end{equation*}%
and the values of the signature $\left( k^{+},k^{-},l\right) $ of the
restriction of the quadratic form $\left( \left. B\phi \right\vert \phi
\right) $ to the kernel of $\mathcal{L}$ depending on\ the parameter $u$ are
given by: 
\begin{equation}
\begin{tabular}[b]{|c|c|c|c|c|c|c|c|}
\hline
$u$ & $\left( -\infty ,u_{-}\right) $ & $u_{-}$ & $\left( u_{-},0\right) $ & 
$0$ & $\left( 0,u_{+}\right) $ & $u_{+}$ & $\left( u_{+},\infty \right) $ \\ 
\hline
$k^{+}$ & $0$ & $0$ & $1$ & $1$ & $d+1$ & $d+1$ & $d+2$ \\ \hline
$k^{-}$ & $d+2$ & $d+1$ & $d+1$ & $1$ & $1$ & $0$ & $0$ \\ \hline
$l$ & $0$ & $1$ & $0$ & $d$ & $0$ & $1$ & $0$ \\ \hline
\end{tabular}%
\text{.}  \label{t1}
\end{equation}

\paragraph{Multicomponent monatomic mixtures}

For the Boltzmann equation - in dimension $d$ - for a mixture of $s$ species 
$\alpha _{1},...,\alpha _{s}$, with masses $m_{\alpha _{1}},...,m_{\alpha
_{s}}$, respectively, let $\mathcal{\mathfrak{h}}=\left( L^{2}\left( d%
\mathbf{v}\right) \right) ^{s}$, with the inner product%
\begin{equation*}
\left( \left. f\right\vert g\right) =\sum_{i=1}^{s}\int_{\mathbb{R}%
^{d}}f_{i}g_{i}\,d\mathbf{v}\text{, }f,g\in \left( L^{2}\left( d\mathbf{v}%
\right) \right) ^{s}\text{.}
\end{equation*}%
Making the linearization $\left( \ref{lin}\right) $ around a non-drifting
Maxwellian 
\begin{equation*}
M=\left( M_{\alpha _{1}},...,M_{\alpha _{s}}\right) ,\text{ }M_{\alpha
_{i}}=n_{\alpha _{i}}\left( \dfrac{m_{\alpha _{i}}}{2\pi T}\right)
^{d/2}e^{-m_{\alpha _{i}}\left\vert \mathbf{v}\right\vert ^{2}/\left(
2T\right) },
\end{equation*}%
an orthogonal basis $\left( \ref{c2}\right) $ of the kernel 
\begin{eqnarray*}
\ker \mathcal{L} &=&\mathrm{span}\left\{ \sqrt{M_{\alpha _{1}}}\mathbf{e}%
_{1},...,\sqrt{M_{\alpha _{s}}}\mathbf{e}_{s},\sqrt{\overline{M}}v,\sqrt{%
\overline{M}}v_{2},\sqrt{\overline{M}}v_{3},\sqrt{\overline{M}}\left\vert 
\mathbf{v}\right\vert ^{2}\right\} , \\
\overline{M} &=&\left( m_{\alpha _{1}}^{2}M_{\alpha _{1}},...,m_{\alpha
_{s}}^{2}M_{\alpha _{s}}\right) \text{, }\mathbf{e}_{i}=(\underset{i-1}{%
\underbrace{0,...,0}},1\underset{s-i}{,\underbrace{0,...,0}})\text{ for }%
i\in \left\{ 1,...,s\right\}
\end{eqnarray*}%
of the linearized operator $\mathcal{L}$ is%
\begin{align*}
& \left\{ 
\begin{array}{l}
\phi _{1,\alpha _{i}}=\frac{m_{\alpha _{i}}}{\sqrt{2\rho T}}\sqrt{M_{\alpha
_{i}}}\left( \sqrt{\frac{\rho }{d(d+2)nT}}\left\vert \mathbf{v}\right\vert
^{2}+v\right) \\ 
\phi _{2,\alpha _{i}}=\frac{m_{\alpha _{i}}}{\sqrt{2\rho T}}\sqrt{M_{\alpha
_{i}}}\left( \sqrt{\frac{\rho }{d(d+2)nT}}\left\vert \mathbf{v}\right\vert
^{2}-v\right) \\ 
\phi _{3+j,\alpha _{i}}=\sqrt{\frac{m_{\alpha _{i}}}{nT}}\sqrt{M_{\alpha
_{i}}}v_{2+j},\;j\in \left\{ 0,...,d-2\right\} , \\ 
\phi _{d+1+j}=\dfrac{\widetilde{\phi }_{d+1+j}}{\left\Vert \widetilde{\phi }%
_{d+1+j}\right\Vert },\;j\in \left\{ 1,...,s\right\} ,%
\end{array}%
\right. \\
\widetilde{\phi }_{d+1+j}& =\widehat{\phi }_{d+1+j}-\sum_{k=2}^{j}\left(
\left. \widehat{\phi }_{d+1+j}\right\vert \phi _{d+k}\right) \phi _{d+k}, \\
\widehat{\phi }_{d+1+j,\alpha _{i}}& =\sqrt{M_{\alpha _{i}}}\left( \frac{%
n_{\alpha _{j}}m_{\alpha _{i}}}{(d+2)nT}\left\vert \mathbf{v}\right\vert
^{2}-\delta _{ij}\right) ,\text{ }\rho =\sum_{k=1}^{s}m_{\alpha
_{k}}n_{\alpha _{k}},\;n=\sum_{k=1}^{s}n_{\alpha _{k}}.
\end{align*}%
Again this can be obtained by equalities $\left( \ref{ia1}\right) $. The
degenerate values of $u$ become \cite{Be-17} 
\begin{equation*}
u_{0}=0\text{ and }u_{\pm }=\pm \sqrt{\frac{nT}{\rho }}\sqrt{\frac{d+2}{d}}.
\end{equation*}%
and the values of the signature $\left( k^{+},k^{-},l\right) $ of the
restriction of the quadratic form $\left( \left. B\phi \right\vert \phi
\right) $ to the kernel of $\mathcal{L}$ depending on\ the parameter $u$ are
given by:%
\begin{equation}
\begin{tabular}[b]{|c|c|c|c|c|c|c|c|}
\hline
$u$ & $\left( -\infty ,u_{-}\right) $ & $u_{-}$ & $\left( u_{-},0\right) $ & 
$0$ & $\left( 0,u_{+}\right) $ & $u_{+}$ & $\left( u_{+},\infty \right) $ \\ 
\hline
$k^{+}$ & $0$ & $0$ & $1$ & $1$ & $d+s$ & $d+s$ & $d+s+1$ \\ \hline
$k^{-}$ & $d+s+1$ & $d+s$ & $d+s$ & $1$ & $1$ & $0$ & $0$ \\ \hline
$l$ & $0$ & $1$ & $0$ & $d+s-1$ & $0$ & $1$ & $0$ \\ \hline
\end{tabular}%
.  \label{t2}
\end{equation}

\paragraph{Nordheim-Boltzmann equation}

For the Nordheim-Boltzmann equation for monatomic single species - in
dimension $d$ (with $d\geq 2$) - let $\mathcal{\mathfrak{h}}=L^{2}\left( d%
\mathbf{p}\right) $, with the inner product%
\begin{equation*}
\left( \left. f\right\vert g\right) =\int_{\mathbb{R}^{d}}fg\,d\mathbf{p}%
\text{, }f,g\in L^{2}\left( d\mathbf{p}\right) \text{,}
\end{equation*}%
and linearize by%
\begin{equation*}
F=P_{\varepsilon }+(P_{\varepsilon }(1+\varepsilon P_{\varepsilon }))^{1/2}f,
\end{equation*}%
with $\varepsilon =1$ for bosons, and $\varepsilon =-1$ for fermions, around
a non-drifting Planckian 
\begin{equation*}
P_{\varepsilon }=P_{\pm }=\dfrac{1}{e^{\left\vert \mathbf{p}\right\vert
^{2}/\left( 2T\right) }\mp 1},\text{ }\mathbf{p}=\left(
p_{1},...,p_{d}\right) .
\end{equation*}%
For bosons we consider the restriction $\left\vert \mathbf{p}\right\vert
\geq \lambda \sqrt{2T}$, for some $\lambda >0$, cf. \cite{AN-13, Be-15} (to
avoid a singularity at $\mathbf{p=0}$). Then an orthogonal basis $\left( \ref%
{c2}\right) $ of the kernel 
\begin{eqnarray*}
\ker \mathcal{L} &=&\mathrm{span}\left\{ \sqrt{\mathcal{R}_{\pm }},\sqrt{%
\mathcal{R}_{\pm }}p_{1},...,\sqrt{\mathcal{R}_{\pm }}p_{d},\sqrt{\mathcal{R}%
_{\pm }}\left\vert \mathbf{p}\right\vert ^{2}\right\} ,\text{ with } \\
\mathcal{R}_{\pm } &=&P_{\pm }(1\pm P_{\pm })\text{,}
\end{eqnarray*}%
of the linearized operator $\mathcal{L}$ is%
\begin{equation*}
\left\{ 
\begin{array}{l}
\phi _{1}^{\pm }=\frac{1}{\left( 2T\right) ^{(d+2)/4}\pi ^{d/4}}\sqrt{P_{\pm
}(1\pm P_{\pm })}\left( \sqrt{\frac{1}{J_{d+2}^{\pm }d\left( d+2\right) T}}%
\left\vert \mathbf{p}\right\vert ^{2}+\sqrt{\frac{1}{J_{d}^{\pm }}}%
p_{1}\right) \\ 
\phi _{2}^{\pm }=\frac{1}{\left( 2T\right) ^{(d+2)/4}\pi ^{d/4}}\sqrt{P_{\pm
}(1\pm P_{\pm })}\left( \sqrt{\frac{1}{J_{d+2}^{\pm }d\left( d+2\right) T}}%
\left\vert \mathbf{p}\right\vert ^{2}-\sqrt{\frac{1}{J_{d}^{\pm }}}%
p_{1}\right) \\ 
\phi _{3+j}^{\pm }=\frac{1}{\left( 2T\right) ^{(d+2)/4}\pi ^{d/4}}\sqrt{%
\frac{2}{J_{d}^{\pm }}}\sqrt{P_{\pm }(1\pm P_{\pm })}p_{2+j}\text{,}\;j\in
\left\{ 0,...,d-2\right\} , \\ 
\phi _{d+2}^{\pm }=\frac{1}{\left( 2\pi T\right) ^{d/4}}\sqrt{\frac{%
(d+2)J_{d+2}^{\pm }}{(d+2)J_{d-2}^{\pm }J_{d+2}^{\pm }-d\left( J_{d}^{\pm
}\right) ^{2}}}\sqrt{P_{\pm }(1\pm P_{\pm })}\left( \frac{J_{d}^{\pm }}{%
J_{d+2}^{\pm }\left( d+2\right) T}\left\vert \mathbf{p}\right\vert
^{2}-1\right) ,%
\end{array}%
\right.
\end{equation*}%
with 
\begin{equation*}
J_{s}^{-}=\dfrac{2}{\Gamma \left( s/2+1\right) }\int_{0}^{\infty }\dfrac{%
r^{s+1}e^{r^{2}}}{\left( e^{r^{2}}+1\right) ^{2}}\,dr=\eta \left( s/2\right)
\ \text{for}\;s\geq 0,
\end{equation*}%
where $\Gamma $ is the gamma-function and $\eta =\dfrac{1}{\Gamma \left(
s\right) }\int\limits_{0}^{\infty }\dfrac{r^{s-1}}{e^{r}+1}\,dr$ is the
Dirichlet eta-function (alternating zeta-function), for fermions; while, for$%
\;s\geq 0,$ 
\begin{eqnarray*}
J_{s}^{+} &=&\dfrac{2}{\Gamma \left( s/2+1\right) }\int_{\lambda }^{\infty }%
\dfrac{r^{s+1}e^{r^{2}}}{\left( e^{r^{2}}-1\right) ^{2}}\,dr \\
&=&\dfrac{1}{\Gamma \left( s/2+1\right) }\dfrac{\lambda ^{s}}{e^{\lambda
^{2}}-1}+\dfrac{1}{\Gamma \left( s/2\right) }\int_{\lambda ^{2}}^{\infty }%
\dfrac{r^{s/2-1}}{e^{r}-1}\,dr\ 
\end{eqnarray*}%
for bosons. Indeed, this can be obtained by - in addition to the fact that
odd components of the integrands can be discarded, due to symmetry of the
integration domain - the equalities%
\begin{eqnarray*}
\left( \left. \frac{e^{a\left\vert \mathbf{p}\right\vert ^{2}}}{\left(
e^{a\left\vert \mathbf{p}\right\vert ^{2}}\mp 1\right) ^{2}}\right\vert
1\right) &=&S_{d-1}\frac{\Gamma \left( d/2\right) }{2a^{d/2}}J_{d-2}^{\pm
}=\left( \frac{\pi }{a}\right) ^{d/2}J_{d-2}^{\pm }\text{,} \\
d\left( \left. \frac{e^{a\left\vert \mathbf{p}\right\vert ^{2}}}{\left(
e^{a\left\vert \mathbf{p}\right\vert ^{2}}\mp 1\right) ^{2}}p_{1}\right\vert
p_{1}\right) &=&\left( \left. \frac{e^{a\left\vert \mathbf{p}\right\vert
^{2}}}{\left( e^{a\left\vert \mathbf{p}\right\vert ^{2}}\mp 1\right) ^{2}}%
\right\vert \left\vert \mathbf{p}\right\vert ^{2}\right) \\
&=&S_{d-1}\frac{\Gamma \left( d/2+1\right) }{2a^{\left( d+2\right) /2}}%
J_{d}^{\pm }=\frac{d}{2a}\left( \frac{\pi }{a}\right) ^{d/2}J_{d}^{\pm }%
\text{, and} \\
d\left( \left. \frac{e^{a\left\vert \mathbf{p}\right\vert ^{2}}}{\left(
e^{a\left\vert \mathbf{p}\right\vert ^{2}}\mp 1\right) ^{2}}p_{1}\right\vert
p_{1}\left\vert \mathbf{p}\right\vert ^{2}\right) &=&\left( \left. \frac{%
e^{a\left\vert \mathbf{p}\right\vert ^{2}}}{\left( e^{a\left\vert \mathbf{p}%
\right\vert ^{2}}\mp 1\right) ^{2}}\left\vert \mathbf{p}\right\vert
^{2}\right\vert \left\vert \mathbf{p}\right\vert ^{2}\right) \\
&=&S_{d-1}\frac{\Gamma \left( d/2+2\right) }{2a^{\left( d+4\right) /2}}%
J_{d+2}^{\pm } \\
&=&\frac{d\left( d+2\right) }{4a^{2}}\left( \frac{\pi }{a}\right)
^{d/2}J_{d+2}^{\pm }\text{ for }a>0\text{, } \\
\text{while }\Gamma \left( s+1\right) &=&s\Gamma \left( s\right) \text{ for }%
s>0\text{,}
\end{eqnarray*}%
where $S_{n}$ is the surface area of the $n$-sphere $\mathbb{S}^{n}$, i.e. $%
S_{n-1}=\dfrac{2\pi ^{n/2}}{\Gamma \left( n/2\right) }$.

Then the degenerate values of $u$ become \cite{Be-17}%
\begin{equation}
u_{0}=0\text{ and }u_{\pm }=\pm \sqrt{\frac{\eta \left( d/2+1\right) }{\eta
\left( d/2\right) }}\sqrt{T}\sqrt{\frac{d+2}{d}}  \label{m1}
\end{equation}%
for fermions, and 
\begin{equation}
u_{0}=0\text{ and }u_{\pm }=\sqrt{\frac{J_{d+2}^{+}}{J_{d}^{+}}}\sqrt{T}%
\sqrt{\frac{d+2}{d}}  \label{m2}
\end{equation}%
for bosons. Note that, for $0\leq s\leq 2$, $J_{s}^{+}\rightarrow \infty $
as $\lambda $$\rightarrow 0$, while, for $s\geq 2$, $J_{s}^{+}\rightarrow
\zeta \left( s/2\right) $ as $\lambda $$\rightarrow 0$, where $\zeta =\dfrac{%
1}{\Gamma \left( s\right) }\int\limits_{0}^{\infty }\dfrac{r^{s-1}}{e^{r}-1}%
\,dr$ is the zeta-function (note that $\zeta \left( 1\right) $ is infinite);
and hence, $u_{\pm }$ $\rightarrow \pm \sqrt{\dfrac{\zeta \left(
d/2+1\right) }{\zeta \left( d/2\right) }}\sqrt{T}\sqrt{\dfrac{d+2}{d}}$ as $%
\lambda $$\rightarrow 0$. The values of the signature $\left(
k^{+},k^{-},l\right) $ of the restriction of the quadratic form $\left(
\left. B\phi \right\vert \phi \right) $ to the kernel of $\mathcal{L}$
depending on\ the parameter $u$ are given by table $\left( \ref{t1}\right) $%
, with $u_{\pm }$ in $\left( \ref{m1}\right) $ and $\left( \ref{m2}\right) $%
, respectively.

\paragraph{Single species of polyatomic molecules}

i) For the Boltzmann equation - in dimension $d$ - for a polyatomic single
species with $r$ different internal energy levels $E_{1},...,E_{r}$, let $%
\mathcal{\mathfrak{h}}=\left( L^{2}\left( d\mathbf{v}\right) \right) ^{r}$,
with the inner product%
\begin{equation*}
\left( \left. f\right\vert g\right) =\sum\limits_{i=1}^{r}\int_{\mathbb{R}%
^{d}}f_{i}g_{i}\,d\mathbf{v}\text{, }f,g\in \left( L^{2}\left( d\mathbf{v}%
\right) \right) ^{r}\text{.}
\end{equation*}%
Making the linearization $\left( \ref{lin}\right) $ around a non-drifting
Maxwellian 
\begin{eqnarray*}
M &=&\left( M_{1},...,M_{r}\right) ,\text{ where }M_{i}=\dfrac{\rho \varphi
_{i}m^{d/2}}{\left( 2\pi T\right) ^{d/2}Q_{0}}e^{-\left( m\left\vert \mathbf{%
v}\right\vert ^{2}+2E_{i}\right) /\left( 2T\right) },\text{ with} \\
Q_{j} &=&\sum_{i=1}^{r}\varphi _{i}E_{i}^{j}e^{-E_{i}/T}\text{ for}\;j\in
\left\{ 0,1,2\right\} ,
\end{eqnarray*}%
where $\varphi _{i}=\varphi (E_{i})$ for $i\in \left\{ 1,...,r\right\} $ are
given weights; an orthogonal basis $\left( \ref{c2}\right) $ of the kernel 
\begin{eqnarray*}
\ker \mathcal{L} &=&\mathrm{span}\left\{ \sqrt{M},\sqrt{M}v,\sqrt{M}%
v_{2},...,\sqrt{M}v_{d},\sqrt{M}\left\vert \mathbf{v}\right\vert ^{2}+2%
\mathcal{E}_{M}\sqrt{M}\right\} , \\
\mathcal{E}_{M} &=&\mathrm{diag}\left( E_{1},...,E_{r}\right) \text{,}
\end{eqnarray*}%
of the linearized operator $\mathcal{L}$ is 
\begin{equation*}
\left\{ 
\begin{array}{l}
\phi _{1,i}=\frac{1}{\sqrt{2\rho T}}\sqrt{M_{i}}\left( \Psi _{i}+\sqrt{m}%
v\right) \\ 
\phi _{2,i}=\frac{1}{\sqrt{2\rho T}}\sqrt{M_{i}}\left( \Psi _{i}-\sqrt{m}%
v\right) \\ 
\phi _{3+j,i}=\sqrt{\frac{m}{\rho T}}\sqrt{M_{i}}v_{2+j}\text{,}\;j\in
\left\{ 0,...,d-2\right\} , \\ 
\phi _{d+2,i}=\sqrt{\frac{d+2+\kappa }{2\rho }}\sqrt{M_{i}}\left( \sqrt{%
\frac{\left( d+\kappa \right) }{\left( d+2+\kappa \right) T}}\Psi
_{i}-1\right) .%
\end{array}%
\right.
\end{equation*}%
with%
\begin{eqnarray}
\Psi _{i} &=&\frac{1}{\sqrt{\left( d+\kappa \right) \left( d+2+\kappa
\right) T}}\left( m\left\vert \mathbf{v}\right\vert ^{2}+2E_{i}+\kappa T-2%
\frac{Q_{1}}{Q_{0}}\right)  \notag \\
\kappa &=&\frac{2}{T^{2}}\frac{Q_{0}Q_{2}-Q_{1}^{2}}{Q_{0}^{2}}.  \label{m3a}
\end{eqnarray}%
This can be obtained by equalities $\left( \ref{ia1}\right) $, indeed, it
follows that 
\begin{eqnarray}
\left( \left. \sqrt{M}\right\vert \sqrt{M}\right) &=&\rho \text{,}  \notag \\
\left( \left. \sqrt{M}v\right\vert \sqrt{M}v\right) &=&\left( \left. \sqrt{M}%
v_{i}\right\vert \sqrt{M}v_{i}\right) =\frac{\rho }{m}T\text{, }i\in \left\{
2,...,d\right\} \text{,}  \notag \\
\left( \left. \sqrt{M}\right\vert \mathfrak{E}_{M}\right) &=&\frac{\rho }{m}%
\left( dT+\frac{2Q_{1}}{Q_{0}}\right) \text{,}  \notag \\
\left( \left. \sqrt{M}v^{2}\right\vert \mathfrak{E}_{M}\right) &=&\frac{\rho 
}{m^{2}}\left( \left( d+2\right) T^{2}+2T\frac{Q_{1}}{Q_{0}}\right) \text{,
and}  \notag \\
\left( \left. \mathfrak{E}_{M}\right\vert \mathfrak{E}_{M}\right) &=&\frac{%
\rho }{m^{2}}\left( d\left( d+2\right) T^{2}+4dT\frac{Q_{1}}{Q_{0}}+4\frac{%
Q_{2}}{Q_{0}}\right) \text{,}  \notag \\
\text{with }\mathfrak{E}_{M} &=&\sqrt{M}\left\vert \mathbf{v}\right\vert
^{2}+2\mathcal{E}_{M}\sqrt{M}  \label{ia2}
\end{eqnarray}%
Then the degenerate values of $u$ become \cite{Be-16b} 
\begin{equation}
u_{0}=0\text{ and }u_{\pm }=\pm \sqrt{\frac{T}{m}}\sqrt{\frac{d+2+\kappa }{%
d+\kappa }},  \label{m3}
\end{equation}%
and the values of the signature $\left( k^{+},k^{-},l\right) $ of the
restriction of the quadratic form $\left( \left. B\phi \right\vert \phi
\right) $ to the kernel of $\mathcal{L}$ depending on\ the parameter $u$ are
given by table $\left( \ref{t1}\right) $, with $u_{\pm }$ in $\left( \ref{m3}%
\right) $.

ii) For the Boltzmann equation - in dimension $d$ - for a polyatomic single
species, where the polyatomicity is modelled by a continuous internal energy
variable $I$, let $\mathcal{\mathfrak{h}}=L^{2}\left( d\mathbf{v}dI\right) $%
, with the inner product%
\begin{equation*}
\left( \left. f\right\vert g\right) =\int_{0}^{1}\int_{\mathbb{R}%
^{d}}f_{i}g_{i}\,d\mathbf{v}dI\text{, }f,g\in L^{2}\left( d\mathbf{v}%
dI\right) \text{.}
\end{equation*}%
Making the linearization $\left( \ref{lin}\right) $ around a non-drifting
Maxwellian 
\begin{equation*}
M=\dfrac{\rho \varphi (I)m^{d/2}}{\left( 2\pi T\right) ^{d/2}Q}e^{-\left(
m\left\vert \mathbf{v}\right\vert ^{2}+2I\right) /\left( 2T\right) },\text{
with }Q=\int_{0}^{\infty }\varphi (I)e^{-I/T}\,dI\text{,}
\end{equation*}%
with $\varphi (I)=I^{\delta /2-1}$ - where $\delta $ is the number of
internal degrees of freedom; an orthogonal basis $\left( \ref{c2}\right) $
of the kernel 
\begin{equation*}
\ker \mathcal{L}=\mathrm{span}\left\{ \sqrt{M},\sqrt{M}v,\sqrt{M}v_{2},...,%
\sqrt{M}v_{d},\sqrt{M}\left( m\left\vert \mathbf{v}\right\vert
^{2}+2I\right) \right\}
\end{equation*}%
of the linearized operator $\mathcal{L}$ is 
\begin{equation*}
\left\{ 
\begin{array}{l}
\phi _{1}=\frac{\sqrt{m}}{\sqrt{2\rho T}}\sqrt{M}\left( \frac{\sqrt{m}}{%
\sqrt{\left( d+\delta \right) \left( d+2+\delta \right) T}}\left( \left\vert 
\mathbf{v}\right\vert ^{2}+\frac{2I}{m}\right) +v\right) \\ 
\phi _{2}=\frac{\sqrt{m}}{\sqrt{2\rho T}}\sqrt{M}\left( \frac{\sqrt{m}}{%
\sqrt{\left( d+\delta \right) \left( d+2+\delta \right) T}}\left( \left\vert 
\mathbf{v}\right\vert ^{2}+\frac{2I}{m}\right) -v\right) \\ 
\phi _{3+j}=\frac{\sqrt{m}}{\sqrt{\rho T}}\sqrt{M}v_{2+j}\text{,}\;j\in
\left\{ 0,...,d-2\right\} , \\ 
\phi _{d+2}=\frac{\sqrt{d+2+\delta }}{\sqrt{2\rho }}\sqrt{M}\left( \frac{m}{%
\left( d+2+\delta \right) T}\left( \left\vert \mathbf{v}\right\vert ^{2}+%
\frac{2I}{m}\right) -1\right) .%
\end{array}%
\right.
\end{equation*}%
This can be obtained by observing that, cf. $\left( \ref{m3a}\right) $,%
\begin{equation}
\frac{2}{T^{2}}\frac{Q\int_{0}^{\infty }\varphi (I)I^{2}e^{-I/T}\,dI-\left(
\int_{0}^{\infty }\varphi (I)Ie^{-I/T}\,dI\right) ^{2}}{Q^{2}}=\delta
\label{m4a}
\end{equation}%
since%
\begin{equation*}
\int_{0}^{\infty }\varphi (I)I^{2}e^{-I/T}\,dI=\frac{T\left( \delta
+2\right) }{2}\int_{0}^{\infty }\varphi (I)Ie^{-I/T}\,dI=\frac{T^{2}\left(
\delta +2\right) \delta }{4}Q\text{.}
\end{equation*}%
Then the degenerate values of $u$ become \cite{Be-16b} \ 
\begin{equation}
u_{0}=0\text{ and }u_{\pm }=\pm \sqrt{\frac{T}{m}}\sqrt{\frac{d+2+\delta }{%
d+\delta }}.  \label{m4}
\end{equation}%
and the values of the signature $\left( k^{+},k^{-},l\right) $ of the
restriction of the quadratic form $\left( \left. B\phi \right\vert \phi
\right) $ to the kernel of $\mathcal{L}$ depending on\ the parameter $u$ are
given by table $\left( \ref{t1}\right) $, with $u_{\pm }$ in $\left( \ref{m4}%
\right) $.

\paragraph{Multicomponent mixtures of polyatomic molecules}

i) For the Boltzmann equation - in dimension $d$ - for a mixture of
polyatomic molecules, consisting of $s$ different species $\alpha
_{1},...,\alpha _{s}$, with masses $m_{\alpha _{1}},...,m_{\alpha _{s}}$,
and with, for each species $\alpha _{i}$, $r_{i}$ different internal energy
levels $E_{1}^{\alpha _{i}},...,E_{r_{i}}^{\alpha _{i}}$, let $\mathcal{%
\mathfrak{h}}=\left( L^{2}\left( d\mathbf{v}\right) \right) ^{\widehat{r}}$,
where $\widehat{r}=\sum_{i=1}^{s}r_{i}$, with the inner product%
\begin{equation*}
\left( \left. f\right\vert g\right)
=\sum_{i=1}^{s}\sum\limits_{j=1}^{r_{i}}\int_{\mathbb{R}^{d}}f_{i,j}g_{i,j}%
\,d\mathbf{v}\text{, }f,g\in \left( L^{2}\left( d\mathbf{v}\right) \right) ^{%
\widehat{r}}\text{, }\widehat{r}=\sum_{i=1}^{s}r_{i}\text{.}
\end{equation*}%
Making the linearization $\left( \ref{lin}\right) $ around a non-drifting
Maxwellian 
\begin{eqnarray*}
M &=&\left( M_{\alpha _{1}},...,M_{\alpha _{s}}\right) ,\;M_{\alpha
_{i}}=\left( M_{\alpha _{i},1},...,M_{\alpha _{i},r_{i}}\right) , \\
M_{\alpha _{i},j} &=&\dfrac{n_{\alpha _{i}}\varphi _{\alpha _{i},j}m_{\alpha
_{i}}^{d/2}}{\left( 2\pi T\right) ^{d/2}Q_{0}^{\alpha _{i}}}e^{-\left(
m_{\alpha _{i}}\left\vert \mathbf{v}\right\vert ^{2}+2E_{j}^{\alpha
_{i}}\right) /\left( 2T\right) }, \\
Q_{k}^{\alpha _{i}} &=&\sum\limits_{j=1}^{r_{i}}\varphi _{\alpha
_{i},j}\left( E_{j}^{\alpha _{i}}\right) ^{k}e^{-E_{j}^{\alpha
_{i}}/T},\;k\in \left\{ 0,1,2\right\} ,
\end{eqnarray*}%
while $\varphi _{\alpha _{i},j_{i}}=\varphi _{\alpha _{i}}\left(
E_{j_{i}}^{\alpha _{i}}\right) $, $j_{i}\in \left\{ 1,...,r_{i}\right\} $, $%
i\in \left\{ 1,...,s\right\} $, are given weights; an orthogonal basis $%
\left( \ref{c2}\right) $ of the kernel 
\begin{eqnarray*}
\ker \mathcal{L} &=&\mathrm{span}\left\{ \sqrt{\mathcal{M}_{\alpha _{1}}}%
,...,\sqrt{\mathcal{M}_{\alpha _{s}}},\mathfrak{m}\sqrt{M}v,\mathfrak{m}%
\sqrt{M}v_{2},...,\mathfrak{m}\sqrt{M}v_{d},\mathfrak{E}_{M}\right\} , \\
\mathfrak{m} &=&\mathrm{diag}(\underset{r_{1}}{\underbrace{m_{\alpha
_{1}},...,m_{\alpha _{1}}}},...,\underset{r_{s}}{\underbrace{m_{\alpha
_{s}},...,m_{\alpha _{s}}}})\text{, }\mathcal{M}_{\alpha _{i}}=\left(
0,...,0,M_{\alpha _{i}},0,...,0\right) \text{,} \\
\mathfrak{E}_{M} &=&\left( \mathfrak{m}\left\vert \mathbf{v}\right\vert
^{2}+2\mathcal{E}_{M}\right) \sqrt{M}\text{, }\mathcal{E}_{M}=\mathrm{diag}%
\left( E_{1}^{\alpha _{1}},...,E_{r_{1}}^{\alpha _{1}},...,E_{1}^{\alpha
_{s}},...,E_{r_{s}}^{\alpha _{s}}\right) ,
\end{eqnarray*}%
of the linearized operator $\mathcal{L}$ is%
\begin{equation*}
\left\{ 
\begin{array}{l}
\phi _{1,\alpha _{i}j}=\frac{1}{\sqrt{2\rho T}}\sqrt{M_{\alpha _{i},j}}%
\left( \Psi _{\alpha _{i}j}+m_{\alpha _{i}}v\right) \\ 
\phi _{2,\alpha _{i}j}=\frac{1}{\sqrt{2\rho T}}\sqrt{M_{\alpha _{i},j}}%
\left( \Psi _{\alpha _{i}j}-m_{\alpha _{i}}v\right) \\ 
\phi _{3+k,\alpha _{i},j}=\sqrt{\frac{m_{\alpha _{i}}}{nT}}\sqrt{M_{\alpha
_{i},j}}v_{2+k},\;k\in \left\{ 0,...,d-2\right\} , \\ 
\phi _{d+1+k}=\dfrac{\widetilde{\phi }_{d+1+k}}{\left\Vert \widetilde{\phi }%
_{d+1+k}\right\Vert }\text{,}\;k\in \left\{ 1,...,s\right\} ,%
\end{array}%
\right.
\end{equation*}%
with%
\begin{align*}
\Psi _{\alpha _{i}j}& =\sqrt{\frac{\rho }{\left( d+\overline{\kappa }\right)
\left( d+2+\overline{\kappa }\right) nT}}\left( m_{\alpha _{i}}\left\vert 
\mathbf{v}\right\vert ^{2}+2E_{j}^{\alpha _{i}}+\overline{\kappa }T-2\frac{%
Q_{1}^{\alpha _{i}}}{Q_{0}^{\alpha _{i}}}\right) \\
\widetilde{\phi }_{d+1+k}& =\widehat{\phi }_{d+1+k}-\sum_{p=2}^{j}\left(
\left. \widehat{\phi }_{d+1+k}\right\vert \phi _{d+p}\right) \phi _{d+k}, \\
\widehat{\phi }_{d+1+k,\alpha _{i},j}& =\sqrt{M_{\alpha _{i},j}}\left( \sqrt{%
\frac{d+\overline{\kappa }}{\left( d+2+\overline{\kappa }\right) \rho nT}}%
n_{\alpha _{k}}\Psi _{\alpha _{i}j}-\delta _{ik}\right) , \\
\rho & =\sum_{i=1}^{s}m_{\alpha _{i}}n_{\alpha
i},\;n=\sum_{k=1}^{s}n_{\alpha _{k}},\;\overline{\kappa }=\frac{2}{nT^{2}}%
\sum_{i=1}^{s}\frac{Q_{0}^{\alpha _{i}}Q_{2}^{\alpha _{i}}-\left(
Q_{1}^{\alpha _{i}}\right) ^{2}}{\left( Q_{0}^{\alpha _{i}}\right) ^{2}}%
n_{\alpha _{i}}.
\end{align*}%
Again this can be obtained by equalities $\left( \ref{ia1}\right) $, cf.
also equalities $\left( \ref{ia2}\right) $. Then the degenerate values of $u$
become 
\begin{equation}
u_{0}=0\text{ and }u_{\pm }=\pm \sqrt{\frac{nT}{\rho }}\sqrt{\frac{d+2+%
\overline{\kappa }}{d+\overline{\kappa }}}\text{.}  \label{m5}
\end{equation}%
and the values of the signature $\left( k^{+},k^{-},l\right) $ of the
restriction of the quadratic form $\left( \left. B\phi \right\vert \phi
\right) $ to the kernel of $\mathcal{L}$ depending on\ the parameter $u$ are
given by table $\left( \ref{t2}\right) $, with $u_{\pm }$ in $\left( \ref{m5}%
\right) $.

ii) For the Boltzmann equation - in dimension $d$ - for a polyatomic
multicomponent mixture of $s$ species with masses $m_{\alpha
_{1}},...,m_{\alpha _{s}}$, respectively, where the polyatomicity is
modelled by a continuous internal energy variable $I$, let $\mathcal{%
\mathfrak{h}}=\left( L^{2}\left( d\mathbf{v}dI\right) \right) ^{s}$, with
the inner product%
\begin{equation*}
\left( \left. f\right\vert g\right) =\sum_{i=1}^{s}\int_{0}^{1}\int_{\mathbb{%
R}^{d}}f_{i}g_{i}\,d\mathbf{v}dI\text{, }f,g\in \left( L^{2}\left( d\mathbf{v%
}dI\right) \right) ^{s}\text{.}
\end{equation*}%
Making the linearization $\left( \ref{lin}\right) $ around a non-drifting
Maxwellian 
\begin{eqnarray*}
M &=&\left( M_{\alpha _{1}},...,M_{\alpha _{s}}\right) \text{,}\;M_{\alpha
_{i}}=\dfrac{n_{\alpha _{i}}\varphi _{\alpha _{i}}(I)m_{\alpha _{i}}^{d/2}}{%
\left( 2\pi T\right) ^{d/2}Q_{\alpha _{i}}}e^{-\left( m_{\alpha
_{i}}\left\vert \mathbf{v}\right\vert ^{2}+2I\right) /\left( 2T\right) }, \\
Q_{\alpha _{i}} &=&\int_{0}^{\infty }\varphi _{\alpha _{i}}(I)e^{-I/T}\,dI%
\text{,}
\end{eqnarray*}%
with $\varphi _{\alpha _{i}}(I)=I^{\delta _{\alpha _{i}}/2-1}$, where $%
\delta _{\alpha _{i}}$ is the number of internal degrees of freedom for
species $\alpha _{i}$; an orthogonal basis $\left( \ref{c2}\right) $ of the
kernel 
\begin{eqnarray*}
\ker \mathcal{L} &=&\mathrm{span}\left\{ \sqrt{M_{\alpha _{1}}}\mathbf{e}%
_{1},...,\sqrt{M_{\alpha _{s}}}\mathbf{e}_{s},\mathfrak{m}\sqrt{M}v,%
\mathfrak{m}\sqrt{M}v_{2},...,\mathfrak{m}\sqrt{M}v_{d},\mathfrak{E}%
_{M}\right\} , \\
&&\text{with }\mathfrak{m}=\mathrm{diag}\left( m_{\alpha _{1}},...,m_{\alpha
_{s}}\right) \text{ and }\mathfrak{E}_{M}=\mathfrak{m}\sqrt{M}\left\vert 
\mathbf{v}\right\vert ^{2}+2I\sqrt{M}\text{,}
\end{eqnarray*}%
of the linearized operator $\mathcal{L}$ is%
\begin{equation*}
\left\{ 
\begin{array}{l}
\phi _{1,\alpha _{i}}=\frac{1}{\sqrt{2\rho T}}\sqrt{M_{\alpha _{i}}}\left(
\Psi _{\alpha _{i}}+m_{\alpha _{i}}v\right) \\ 
\phi _{2,\alpha _{i}}=\frac{1}{\sqrt{2\rho T}}\sqrt{M_{\alpha _{i}}}\left(
\Psi _{\alpha _{i}}-m_{\alpha _{i}}v\right) \\ 
\phi _{3+j,\alpha _{i}}=\sqrt{\frac{m_{\alpha _{i}}}{nT}}\sqrt{M_{\alpha
_{i}}}v_{2+j},\;j\in \left\{ 0,...,d-2\right\} , \\ 
\phi _{d+1+j}=\dfrac{\widetilde{\phi }_{d+1+j}}{\left\Vert \widetilde{\phi }%
_{d+1+j}\right\Vert }\text{,}\;j\in \left\{ 1,...,s\right\} ,%
\end{array}%
\right.
\end{equation*}%
with%
\begin{align*}
\Psi _{\alpha _{i}}& =\sqrt{\frac{\rho }{\left( d+\overline{\delta }\right)
\left( d+2+\overline{\delta }\right) nT}}\left( m_{\alpha _{i}}\left\vert 
\mathbf{v}\right\vert ^{2}+2I+\left( \overline{\delta }-\delta _{\alpha
_{i}}\right) T\right) \\
\widetilde{\phi }_{d+1+j}& =\widehat{\phi }_{d+1+j}-\sum_{k=2}^{j}\left(
\left. \widehat{\phi }_{d+1+j}\right\vert \phi _{d+k}\right) \phi _{d+k}, \\
\widehat{\phi }_{d+1+j,\alpha _{i}}& =\sqrt{M_{\alpha _{i}}}\left( \sqrt{%
\frac{d+\overline{\delta }}{(d+2+\overline{\delta })\rho nT}}n_{\alpha
_{j}}\Psi _{\alpha _{i}}-\delta _{ij}\right) , \\
\rho & =\sum_{k=1}^{s}m_{\alpha _{k}}n_{\alpha
_{k}},\;n=\sum_{k=1}^{s}n_{\alpha _{k}},\text{ }\overline{\delta }=\frac{%
\sum_{i=1}^{s}\delta _{\alpha _{i}}n_{\alpha _{i}}}{n}.
\end{align*}%
We stress that $\delta _{ij}=\left\{ 
\begin{array}{c}
1\text{ if }i=j \\ 
0\text{ if }i\neq j%
\end{array}%
\right. $ is the Kronecker delta, and not connected to the the numbers $%
\delta _{\alpha _{i}}$ of internal degrees of freedom. This can be obtained
by observing that, cf. $\left( \ref{m4a}\right) $,%
\begin{equation*}
\frac{2}{T^{2}}\frac{Q_{\alpha _{i}}\int_{0}^{\infty }\varphi _{\alpha
_{i}}(I)I^{2}e^{-I/T}\,dI-\left( \int_{0}^{\infty }\varphi _{\alpha
_{i}}(I)Ie^{-I/T}\,dI\right) ^{2}}{Q_{\alpha _{i}}^{2}}=\delta _{\alpha _{i}}%
\text{, }i\in \left\{ 1,...,s\right\} \text{.}
\end{equation*}%
Then the degenerate values of $u$ become 
\begin{equation}
u_{0}=0\text{ and }u_{\pm }=\pm \sqrt{\frac{nT}{\rho }}\sqrt{\frac{d+2+%
\overline{\delta }}{d+\overline{\delta }}}\text{,}  \label{m6}
\end{equation}%
and the values of the signature $\left( k^{+},k^{-},l\right) $ of the
restriction of the quadratic form $\left( \left. B\phi \right\vert \phi
\right) $ to the kernel of $\mathcal{L}$ depending on\ the parameter $u$ are
given by table $\left( \ref{t2}\right) $, with $u_{\pm }$ in $\left( \ref{m6}%
\right) $.

\begin{acknowledgement}
The author is indebted to Prof. F. Golse for valuable discussions and kind
hospitality during visits to Paris and acknowledges the support by French
Institute in Sweden (through the FR\"{O} program in 2016) and SveFUM (in
2017 and 2019) for his visits.
\end{acknowledgement}

\bibliographystyle{siamproc}
\bibliography{biblo1}

\end{document}